\def\NZQ{\mathbb}               
\def\NN{{\NZQ N}}
\def\ZZ{{\NZQ Z}}
\def\RR{{\NZQ R}}
\newtheorem{Theorem}{Theorem}[section]
\newtheorem{Lemma}[Theorem]{Lemma}
\newtheorem{Corollary}[Theorem]{Corollary}
\newtheorem{Proposition}[Theorem]{Proposition}
\newtheorem{Question}[Theorem]{Question}
\let\epsilon\varepsilon
\let\phi=\varphi
\let\kappa=\varkappa
\begin{document}
\title{Asymptotic Multiplicities}
\author{Steven Dale Cutkosky }
\thanks{Partially supported by NSF}

\address{Steven Dale Cutkosky, Department of Mathematics,
University of Missouri, Columbia, MO 65211, USA}
\email{cutkoskys@missouri.edu}


\maketitle

\section{Introduction} 

This paper is based on a talk  on graded families of ideals and filtrations and associated asymptotic multiplicities, which was given by the author at the Third International Symposium on Groups, Algebras and Related Topics, celebrating the 50th anniversary of the Journal of Algebra, held in Beijing, China, from June 10 - 16, 2013.

Section \ref{Section2} of this paper discusses these concepts. The main consideration is the behavior of the length
$\ell_R(R/I_n)$ on a graded family of $m_R$-primary ideals  in a $d$-dimensional local ring $R$. The classical result is for the case when
$I_n=I^n$ are powers of a fixed ideal. In this case it is classical that 
$\ell_R(R/I_n)$ is a polynomial of degree $d$ for large $n$. This is however not the case for general graded families. In fact, the associated graded ring $\bigoplus_{n\ge 0}I_n$ is in general not a finitely generated $R$-algebra, so we cannot
expect  $\ell_R(R/I_n)$ to  be polynomial like for large $n$ in general. The remarkable thing is that under very general conditions this length does asymptotically approach a polynomial of degree $d$; that is, the limit
\begin{equation}\label{eqI1}
\lim_{n\rightarrow\infty}\frac{\ell_R(R/I_n)}{n^d}\in\RR
\end{equation}
exists. 
The main result recalled in this section is Theorem \ref{Theorem1}, which is  Theorem 5.3 of \cite{C6}. It shows that the limit (\ref{eqI1}) exists for all graded families of $m_R$-primary ideals if and only if the completion of $R$ is generically reduced. 
A condensed proof of Theorem \ref{Theorem1} is given in Section \ref{Section5}, referring to results from our papers \cite{C4} and \cite{C6}. We will build on this proof to establish some of the applications in the next section of this paper. 

In Section \ref{Section5}, we give applications of Theorem \ref{Theorem1} and the method of its proof. Some of the results are quoted from \cite{C4} and \cite{C6}. Most of the results are new to this paper. We give complete proofs for the new theorems. They include a Minkowski type formula for graded families of ideals (Theorem \ref{TheoremM7}),  some formulas for 
limits associated to divisorial ideals, including a proof that the local volume exists as a limit under very general conditions (Corollary \ref{CorollaryM40}), and a proof that  epsilon multiplicity exists  as a limit for modules under very general conditions (Theorem \ref{TheoremC}).
The longer proofs of Theorem \ref{TheoremM7} and Theorem \ref{TheoremC} are given in Sections \ref{Section6} and \ref{Section7}. 

In Section \ref{Section4}, we consider  other polynomial like properties that $\ell_R(R/I_n)$ could asymptotically have. 
The basic conclusion is that  Theorem \ref{Theorem1} is the strongest statement of polynomial like behavior that is always true.

In the course of the paper, we discuss in detail previous results and some of the history of the problems.

\section{Asymptotic multiplicities}\label{Section2} 

\subsection{Multiplicity, graded families and filtrations of ideals}
Let $(R,m_R)$ be a (Noetherian) local ring of dimension $d$. 

A family of ideals $\{I_n\}_{n\in\NN}$ of $R$ is called a graded family of ideals if $I_0=R$ and $I_mI_n\subset I_{m+n}$ for all $m,n$.
$\{I_n\}$ is a filtration of $R$ if we further have that $I_{n+1}\subset I_n$  for all $n$. 

The most basic example is $I_n=J^n$ where $J$ is a fixed ideal of $R$.

Suppose that $N$ is a finitely generated $R$-module, and $I$ is an $m_R$-primary ideal.
Let
$$
t=\dim R/\mbox{ann}(N)
$$ 
be the dimension of $N$. Let $\ell_R$ be the length of an $R$-module.

The theorem of Hilbert-Samuel is that the function $\ell_R(N/I^nN)$ is a polynomial in $n$ of degree $t$ for $n\gg0$
(Chapter VIII, \cite{ZS2}). This polynomial is called the Hilbert-Samuel polynomial.

The multiplicity of a finitely generated $R$-module $N$ with respect to an $m_R$-primary ideal $I$ is the leading coefficient of this polynomial times $t!$; that is, 
\begin{equation}\label{eq3}
e_I(N)=\lim_{n\rightarrow \infty}\frac{\ell_R(N/I^nN)}{n^t/t!}.
\end{equation}
This multiplicity is always a natural number.
We write $e(I)=e_I(R)$.

The following example  shows that irrational limits occur for natural filtrations.

\begin{Theorem}(Example 6 \cite{CS}) There exists an inclusion $R\rightarrow S$ of $d$-dimensional normal domains, essentially of finite type over the complex numbers, such that 
$$
\lim_{n\rightarrow\infty}\frac{\ell_R(R/I_n)}{n^d}
$$
exists but is an irrational number, where $I_n=m_S^n\cap R$.
\end{Theorem}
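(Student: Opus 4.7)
The plan is to transport the irrationality of asymptotic volumes of divisors on projective varieties into local algebra. The essential geometric input, from Cutkosky's earlier work, is a smooth projective variety $Y$ of dimension $d-1$ over $\CC$ together with a big divisor $D$ on $Y$ whose volume
\[
\mathrm{vol}(D)\;=\;\lim_{n\to\infty}\frac{\dim_\CC H^0(Y, nD)}{n^{d-1}/(d-1)!}
\]
is irrational. The goal is to realize $\mathrm{vol}(D)/d!$ as the limit in the theorem.

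I would build $R\subset S$ from a proper birational morphism of affine varieties attached to $Y$. Fix a very ample divisor $H$ on $Y$ with $H-D$ effective, and a nonzero section $s\in H^0(Y,H-D)$. Let $R$ be the local ring at the vertex of the affine cone over $(Y,H)$, so that $R$ is a $d$-dimensional normal local domain essentially of finite type over $\CC$ in which $m_R$ corresponds to the irrelevant ideal of $\bigoplus_{n\ge 0}H^0(Y,nH)$. Let $X$ be a birational modification of this cone, for instance the normalized blowup of the ideal $s\cdot H^0(Y,D)\subset H^0(Y,H)$, and let $S$ be the local ring of $X$ at a suitable closed point lying over the vertex. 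Then $R\subset S$ is an inclusion of $d$-dimensional normal local domains, both essentially of finite type over $\CC$, and the ideals $I_n=m_S^n\cap R$ form a graded family of $m_R$-primary ideals governed by the geometry of the modification.

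The main computation translates $\ell_R(R/I_n)$ into cohomology on $Y$. Because $I_n$ consists of functions on the affine cone whose pullback to $X$ vanishes to order $\ge n$ at the chosen point, an intersection-theoretic and cohomological calculation on a projective completion of $X$, combined with Serre vanishing applied to sufficiently positive multiples, identifies
\[
\ell_R(R/I_n)\;=\;\sum_{k=0}^{n-1}\dim_\CC H^0(Y, kD)\;+\;o(n^d)\;\sim\;\frac{\mathrm{vol}(D)}{d!}\,n^d.
\]
Since the right-hand side has irrational leading coefficient by hypothesis, the limit exists and is irrational. The easy inclusion $I_n\supset$ (pullback of the degree-$\ge n$ part) follows from the construction of $m_S$; the reverse inclusion uses the graded structure of the cone together with the compatibility of the birational filtration on $R$ with this grading.

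The principal obstacle is controlling the error term $o(n^d)$. The passage from lengths on the affine cone to sums of dimensions of global sections on $Y$ introduces corrections from higher cohomology and from the discrepancy between the $m_S$-adic and pulled-back filtrations, and these must be shown to be of strictly lower order than $n^d$. This is compatible with $R$ being essentially of finite type because, although the full section ring $\bigoplus_n H^0(Y,nD)$ is typically not finitely generated when $\mathrm{vol}(D)$ is irrational, the rings $R$ and $S$ are local rings on finitely generated ambient varieties, and the irrational asymptotics appear as a secondary limiting invariant rather than as the Hilbert function of a graded ring on $R$ itself. Engineering the birational data so that the error terms are genuinely small while preserving normality is the delicate technical core of the Cutkosky--Srinivas construction.
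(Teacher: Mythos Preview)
The paper does not prove this theorem: it is quoted as Example~6 of \cite{CS} with no argument, and serves only as motivation that irrational limits can occur for natural filtrations. There is thus no proof in the present paper against which to compare your proposal.

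That said, your sketch is in the right spirit, and it is roughly how the example in \cite{CS} is built. There the construction is entirely explicit: $Y$ is a ruled surface over an elliptic curve, $D$ is chosen so that a specific intersection number is irrational, and $R\subset S$ are written down as concrete graded section rings localized at their irrelevant ideals. The length $\ell_R(R/I_n)$ is then computed directly from the graded pieces, and normality, the equality $I_n=m_S^n\cap R$ in graded terms, and the vanishing of error terms all follow from hands-on calculations with line bundles on the ruled surface rather than from a general ``volume $=$ asymptotic length'' principle. Your more abstract packaging via $\mathrm{vol}(D)$ is a legitimate reformulation, but the steps you flag as obstacles --- normality of the cones, the precise identification of $m_S^n\cap R$ with a graded piece, and the $o(n^d)$ control --- are exactly the points that in \cite{CS} are handled by the specific choice of $Y$ and $D$ and would not go through for an arbitrary big divisor with irrational volume. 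So as an outline your proposal is sound, but the verification genuinely depends on the concrete example and is not a formal consequence of the existence of irrational-volume divisors.
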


The irrationality of the limit implies that $\bigoplus_{n\ge 0}I_n$ is not a finitely generated $R$-algebra.

\subsection{Limits of multiplicities of graded families of ideals}

The following theorem makes use of the Minkowski inequality of ideals in local rings by Teissier \cite{T} and Rees and 
Sharp \cite{RS} (c.f. Section 17.7 \cite{SH}). The interesting conclusion in 1) has  already been pointed out by Ein, Lazarsfeld and Smith \cite{ELS}
and by Musta\c{t}\u{a} \cite{Mus}.

\begin{Theorem}\label{TheoremF1} Suppose that $(R,m_R)$ is a $d$-dimensional local ring and $\{I_n\}$ is a graded family of $m_R$-primary ideals in $R$. Then
\begin{enumerate}
\item[1)] The limit
$$
\lim_{n\rightarrow \infty}\frac{e(I_n)}{n^d}
$$
exists.
\item[2)] There exists a constant $\gamma$ (depending on the family $\{I_n\}$) such that
$$
e(I_{n+1})-e(I_n)\le \gamma n^{d-1}
$$
for all $n\in \NN$.
In particular, if $\{I_n\}$ is a filtration, then
$$
0\le e(I_{n+1})-e(I_n)\le \gamma n^{d-1}.
$$
\end{enumerate}
\end{Theorem}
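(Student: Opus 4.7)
The plan is to reduce both statements to the Minkowski inequality for multiplicities of $m_R$-primary ideals cited in the setup: $e(IJ)^{1/d}\le e(I)^{1/d}+e(J)^{1/d}$. Setting $a_n=e(I_n)^{1/d}$, Minkowski should convert the semigroup-type inclusions coming from the graded-family axiom into a subadditive sequence to which Fekete's lemma applies.

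For 1), I would combine the graded-family condition $I_m I_n\subset I_{m+n}$ with the order-reversing property of multiplicity on $m_R$-primary ideals (the containment $I\subset J$ gives a surjection $R/I^k\twoheadrightarrow R/J^k$, hence $e(J)\le e(I)$) to get $e(I_{m+n})\le e(I_m I_n)$, then apply Minkowski to obtain the subadditivity $a_{m+n}\le a_m+a_n$. Since also $I_1^n\subset I_n$ yields $a_n\le n a_1$, the sequence $a_n/n$ is bounded, so Fekete's lemma gives $\lim_{n\to\infty}a_n/n=\inf_n a_n/n\in[0,a_1]$. Raising to the $d$-th power produces the existence of $\lim_{n\to\infty}e(I_n)/n^d$.

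For 2), applying the same two ingredients to $I_1 I_n\subset I_{n+1}$ gives $a_{n+1}\le a_1+a_n$. Factoring $a_{n+1}^d-a_n^d=(a_{n+1}-a_n)\sum_{i=0}^{d-1}a_{n+1}^{d-1-i}a_n^i$, bounding the first factor by $a_1$ and each summand by $((n+1)a_1)^{d-1}$ using $a_n\le n a_1$, yields $e(I_{n+1})-e(I_n)\le\gamma n^{d-1}$ for a suitable constant $\gamma$ depending only on $e(I_1)$ and $d$. When $\{I_n\}$ is a filtration, the reverse inclusion $I_{n+1}\subset I_n$ and the order-reversing property of multiplicity supply the missing lower bound $e(I_n)\le e(I_{n+1})$.

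There is no substantive obstacle: the content is really just Minkowski plus Fekete. The only point that requires care is the inclusion-reversal coming from multiplicity, so that the \emph{multiplicative} containment $I_m I_n\subset I_{m+n}$ on the ideal side translates into the correctly-oriented \emph{additive} inequality $a_{m+n}\le a_m+a_n$ on the multiplicity side.
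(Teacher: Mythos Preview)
Your proposal is correct and follows essentially the same route as the paper: both arguments rest on the Minkowski inequality to obtain subadditivity of $a_n=e(I_n)^{1/d}$, then invoke Fekete's lemma for part 1) and factor $a_{n+1}^d-a_n^d$ for part 2). The only cosmetic difference is that the paper cites the existence of the limit in 1) to get the bound $e(I_n)^{1/d}<an$, whereas you obtain the equivalent bound $a_n\le n a_1$ directly from subadditivity; your version is arguably cleaner since it makes the constant explicit.
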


\begin{proof} Both statements follow from the Minkowski inequality (Teissier \cite{T} and Rees and Sharp \cite{RS}),
$$
e(I_{m+n})^{\frac{1}{d}}\le e(I_mI_n)^{\frac{1}{d}}\le e(I_m)^{\frac{1}{d}}+e(I_n)^{\frac{1}{d}}
$$
for all $m,n$. In Corollary 1.5 \cite{Mus} a simple argument is given which shows that the first limit exists.
We establish the second formula. From 1), we have an upper bound 
$$
e(I_n)<cn^d,
$$
so that
\begin{equation}\label{eq60}
e(I_n)^{\frac{1}{d}}<an
\end{equation}
with 
$a=c^{\frac{1}{d}}$.
Taking $m=1$ in the Minkowski inequality, we have that
\begin{equation}\label{eq61}
e(I_{n+1})^{\frac{1}{d}}-e(I_n)^{\frac{1}{d}}\le e(I_1)^{\frac{1}{d}}.
\end{equation}
We factor
$$
\begin{array}{lll}
e(I_{n+1})-e(I_n)&=& \left(e(I_{n+1})^{\frac{1}{d}}-e(I_n)^{\frac{1}{d}}\right)
\left(e(I_{n+1})^{\frac{d-1}{d}}+e(I_{n+1})^{\frac{d-2}{d}}e(I_n)^{\frac{1}{d}}+\cdots+e(I_n)^{\frac{d-1}{d}}\right)\\
&\le& e(I_1)^{\frac{1}{d}}\left((a(n+1))^{d-1}+(a(n+1))^{d-2}an+\cdots+(an)^{d-1}\right).
\end{array}
$$
Using the inequalities (\ref{eq60}) and (\ref{eq61}), we obtain the desired bound.
\end{proof}

In contrast to Statement 2) of Theorem \ref{TheoremF1}, in any local ring $R$, there exists a graded family of $m_R$-primary ideals $\{I_n\}$ such that
$$
\limsup_{n\rightarrow\infty}\frac{e(I_{n+1})-e(I_n)}{n^{d-1}}=\infty.
$$
This is shown in equation (\ref{eq62}) of Theorem \ref{Theorem4}. Further, in any local ring $R$, there exists a filtration of
 $m_R$-primary ideals $\{J_n\}$ such that the limit
 $$
 \lim_{n\rightarrow\infty}\frac{e(J_{n+1})-e(J_n)}{n^{d-1}}
 $$
 does not exist. This is shown in 4) of Theorem \ref{Theorem7}.

\subsection{Limits of lengths of graded families of ideals}

Suppose that $\{I_n\}_{n\in\NN}$ is a graded family of $m_R$-primary ideals ($I_n$ is $m_R$-primary for $n\ge 1$) in a $d$-dimensional (Noetherian) local ring $R$. We pose the following question:

\begin{Question}  When does 
\begin{equation}\label{eq10}
\lim_{n\rightarrow\infty}\frac{\ell_R(R/I_n)}{n^d}
\end{equation}
exist?
\end{Question}

This problem was considered by Ein, Lazarsfeld and Smith \cite{ELS} and Musta\c{t}\u{a} \cite{Mus}.

Lazarsfeld and Musta\c{t}\u{a} \cite{LM} showed that
the limit exists for all graded families of $m_R$-primary ideals in $R$ if $R$ is a domain which is essentially of finite type over an algebraically closed field $k$ with $R/m_R=k$. All of these assumptions are necessary in their proof. Their proof is by reducing the problem to one on graded linear series on a projective variety, and then using a method introduced by Okounkov \cite{Ok} to reduce the problem to one of counting points in an integral semigroup.

In \cite{C4}, it is shown that
the limit exists for all graded families of $m_R$-primary ideals in $R$ if $R$ is analytically unramified ($\hat R$ is reduced), equicharacteristic and $R/m_R$ is perfect.

The nilradical $N(R)$ of a $d$-dimensional ring $R$ is 
$$
N(R)=\{x\in R\mid x^n=0 \mbox{ for some positive integer $n$}\}.
$$
Recall that 
$$
\dim N(R)=\dim R/\mbox{ann}(N(R)),
$$
so that $\dim N(R)=d$ if and only if there exists a minimal prime $P$ of $R$ such that $\dim R/P =d$ and $R_P$ is not reduced.

We now state our general theorem, which gives necessary and sufficient conditions on a local ring $R$ for all limits of graded families of $m_R$-primary ideals to exist.

\begin{Theorem}\label{Theorem1} (Theorem 5.3  \cite{C6}) Suppose that $R$ is a $d$-dimensional  local ring. Then the limit 
$$
\lim_{n\rightarrow \infty}\frac{\ell_R(R/I_n)}{n^d}
$$
exists for all 
graded families of $m_R$-primary ideals $\{I_n\}$ of $R$ if and only if
$\dim N(\hat R)<d$. 
\end{Theorem}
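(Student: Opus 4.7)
The plan is to reduce to $R=\hat R$, handle the sufficient direction by a further reduction to the complete local domain case, and handle the necessary direction by an oscillatory construction driven by nilpotents in $\hat R$. Reduction to $R=\hat R$ is immediate for the sufficient direction, since $\ell_R(R/I_n)=\ell_{\hat R}(\hat R/I_n\hat R)$ and $\{I_n\hat R\}$ is a graded family of $m_{\hat R}$-primary ideals in $\hat R$.

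Now assume $R$ is complete with $\dim N(R)<d$, where $N=N(R)$. Replacing $R$ by $R_{\mathrm{red}}=R/N$ introduces only an error of $O(n^{d-1})$, since $\ell_R(N/(N\cap I_n))\le \ell_R(N/I_nN)=O(n^{\dim N})=O(n^{d-1})$ (using $m_R^{cn}\subseteq I_n$ for a suitable $c$, valid because $I_1$ is $m_R$-primary). Let $P_1,\dots,P_r$ be the minimal primes of $R_{\mathrm{red}}$ with $\dim R/P_i=d$; the remaining minimal primes have smaller dimension. The natural map $R_{\mathrm{red}} \hookrightarrow \prod_{i=1}^r R/P_i$ has both kernel and cokernel of dimension strictly less than $d$, so repeating the same short-exact-sequence argument yields
$$
\ell_R(R/I_n)=\sum_{i=1}^r \ell_{R/P_i}\!\bigl((R/P_i)/I_n(R/P_i)\bigr)+O(n^{d-1}).
$$
Existence of the limit thus reduces to the case of a complete local domain: this is the main theorem of \cite{C6}, extending the domain case of \cite{C4} beyond the equicharacteristic/perfect-residue-field hypotheses by choosing a valuation $\nu$ of rational rank $d$ on $R/P_i$ and applying an Okounkov-body-style analysis to the resulting integral semigroup.

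For the necessary direction, suppose $\dim N(\hat R)=d$. Choose a minimal prime $P$ of $\hat R$ with $\dim\hat R/P=d$ for which $\hat R_P$ is not reduced, and fix a nilpotent $f\in\hat R\setminus P$. One builds an $m_R$-primary graded family $\{I_n\}$ in $R$ whose normalized length $\ell_R(R/I_n)/n^d$ has two distinct accumulation points along two interleaved subsequences, the alternation being driven by alternately incorporating or suppressing the effect of $f$ (via suitable approximations within $R$). The principal obstacle is the complete local domain case of the sufficient direction, which requires the full valuation-theoretic/integral-semigroup machinery of \cite{C6}; a secondary but nontrivial point is guaranteeing that the oscillatory construction genuinely descends to $R$ rather than living only in $\hat R$.
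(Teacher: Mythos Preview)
Your sufficient direction is exactly the paper's: pass to $\hat R$, kill the nilradical at cost $O(n^{d-1})$, reduce to complete local domains via the minimal primes (the paper's Lemma~\ref{LemmaM30} uses all minimal primes rather than only the top-dimensional ones, but the difference is absorbed in the $O(n^{d-1})$ term), and then invoke the valuation/Okounkov argument.

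Two small points on the necessary direction. First, ``a nilpotent $f\in\hat R\setminus P$'' cannot exist, since every nilpotent lies in every prime. What you want (and what the paper uses in Theorem~\ref{TheoremN1}) is that $P$, being a minimal prime in the support of $N(\hat R)$, is an associated prime of $N(\hat R)$; hence there is $0\ne x\in\hat R$ with $\mathrm{ann}(x)=P$, automatically $x\in P$ and $x^2=0$. The oscillatory family is then $I_n=m_{\hat R}^n+x\,m_{\hat R}^{b_n}$ for a sequence $\{b_n\}$ with $b_{m+n}\le m+b_n$ and $\lim b_n/n$ nonexistent.

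Second, the descent you flag as ``nontrivial'' is in fact immediate, and \emph{not} via ``approximations within $R$'' (approximating $x$ by an element of $R$ would typically lose nilpotence). The point is that contraction and extension give inverse bijections between $m_R$-primary ideals of $R$ and $m_{\hat R}$-primary ideals of $\hat R$: if $\hat J$ is $m_{\hat R}$-primary then $R/(\hat J\cap R)\to \hat R/\hat J$ is an isomorphism (surjectivity because $R$ is dense in $\hat R$ and $m_{\hat R}^N\subset\hat J$), so $(\hat J\cap R)\hat R=\hat J$ and $\ell_R(R/(\hat J\cap R))=\ell_{\hat R}(\hat R/\hat J)$. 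Thus $\{I_n:=\hat J_n\cap R\}$ is a graded family of $m_R$-primary ideals in $R$ with the same (non)limit.
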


If $R$ is excellent, then $N(\hat R)= N(R)\hat R$, and the theorem is true with the condition $\dim N(\hat R)<d$ replaced with 
$\dim N(R)<d$. However, there exist Noetherian local domains $R$ (so that $N(R)=0$) such that $\dim N(\hat R)=\dim R$ (Nagata (E3.2) \cite{N1}).

The fact that $N(R)=d$ implies there exists a graded family without a limit was observed by Dao and Smirnov (Theorem 5.2 \cite{C6}).

The proof of Theorem \ref{Theorem1} will be given in Section \ref{Section5}.

\section{Applications}\label{Section3}

In this section, we give some applications of Theorem \ref{Theorem1}, and the method of its proof.

We first give a general ``Volume = Multiplicity'' formula.

\begin{Theorem}\label{Theorem2}(Theorem 11.5 \cite{C6}) Suppose that $R$ is a $d$-dimensional, analytically unramified local ring, and
$\{I_n\}$ is a graded family of $m_R$-primary ideals in $R$. Then 
$$
\lim_{n\rightarrow \infty}\frac{\ell_R(R/I_n)}{n^d/d!}=\lim_{p\rightarrow \infty}\frac{e_{I_p}(R)}{p^d}
$$
\end{Theorem}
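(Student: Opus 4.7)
Write $L:=\lim_{n\to\infty}\ell_R(R/I_n)/(n^d/d!)$ and $M:=\lim_{p\to\infty}e_{I_p}(R)/p^d$; the goal is $L=M$. Both limits exist at the outset: $L$ exists by Theorem \ref{Theorem1}, because $R$ analytically unramified makes $\hat R$ reduced, so $\dim N(\hat R)=0<d$; and $M$ exists by Theorem \ref{TheoremF1}(1). I plan to prove the two inequalities $L\le M$ and $M\le L$ separately.

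The inequality $L\le M$ uses only the graded family property. From $I_p^n\subset I_{pn}$ one obtains $\ell_R(R/I_p^n)\ge \ell_R(R/I_{pn})$, so
\[
\frac{e_{I_p}(R)}{p^d}\;=\;\lim_{n\to\infty}\frac{d!\,\ell_R(R/I_p^n)}{(pn)^d}\;\ge\;\lim_{n\to\infty}\frac{d!\,\ell_R(R/I_{pn})}{(pn)^d}\;=\;L,
\]
and letting $p\to\infty$ yields $L\le M$.

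For the reverse $M\le L$ I would invoke the semigroup/Okounkov-body method underlying the proof of Theorem \ref{Theorem1} (to be recalled in Section \ref{Section5}). Pass to $\hat R$ (this preserves both $\ell_R$ and $e_{I_p}$); since $\hat R$ is reduced, decompose $\ell_R$ over its minimal primes to reduce to the case of a complete local domain. Using Cohen structure, embed this domain into a complete regular local ring and construct an $\NN^d$-valued valuation $\nu$ centered on $m_R$ such that, for any graded family $\{J_n\}$ of $m_R$-primary ideals, the complementary semigroup $\bar\Gamma(\{J_n\}):=\{(v,n)\in\NN^d\times\NN:v\notin \nu(J_n\setminus\{0\})\}$ satisfies $\#\bar\Gamma(\{J_n\})_n=\ell_R(R/J_n)+o(n^d)$. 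A standard lattice-point argument (in the spirit of \cite{LM},\cite{C4},\cite{C6}) then identifies the normalized asymptotic with the Euclidean volume of a convex body $\bar\Delta(\{J_n\})\subset\RR^d$, giving $L=d!\,\mathrm{vol}(\bar\Delta(\{I_n\}))$ and $e_{I_p}(R)=d!\,\mathrm{vol}(\bar\Delta(\{I_p^n\}))$ for each $p$. The inclusion $\bar\Delta(\{I_p^n\})/p\supseteq \bar\Delta(\{I_n\})$---dual to $I_p^n\subset I_{pn}$---reproduces the easy direction, so $M\le L$ is reduced to the convergence $\mathrm{vol}(\bar\Delta(\{I_p^n\})/p)\to \mathrm{vol}(\bar\Delta(\{I_n\}))$ as $p\to\infty$.

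The main obstacle is this geometric convergence, i.e.\ that the excess body $\bar\Delta(\{I_p^n\})/p\setminus \bar\Delta(\{I_n\})$ has volume tending to zero with $p$. It amounts to controlling, uniformly in $n$, how much $\nu(I_{pn})$ can properly contain $\nu(I_p^n)$; this is where analytic unramifiedness enters essentially, via Rees's theorem on integral closures in analytically unramified rings, which supplies the uniform bounds needed to prevent the discrepancy from inflating with $n$. Once the convergence is secured, $e_{I_p}(R)/p^d=d!\,\mathrm{vol}(\bar\Delta(\{I_p^n\})/p)\to d!\,\mathrm{vol}(\bar\Delta(\{I_n\}))=L$, giving $M\le L$ and completing the proof.
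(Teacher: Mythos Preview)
Your existence claims and the inequality $L\le M$ are correct. The hard direction $M\le L$, however, has two problems.

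First, a technical one: the ``complementary semigroup'' $\bar\Gamma(\{J_n\})=\{(v,n):v\notin\nu(J_n\setminus\{0\})\}$ is not a semigroup---complements of value sets have no reason to be closed under addition---so the cone theorem (Theorem~\ref{Theorem3}) does not apply to it and you cannot directly identify its asymptotic count with a volume. The framework in Section~\ref{Section5} instead expresses $\ell_R(R/I_i)$ as a \emph{difference} of counts over two genuine semigroups $\Gamma^{(t)}\subset\hat\Gamma^{(t)}$ (equation~(\ref{eqM11})), leading in the analytically irreducible case to the covolume formula~(\ref{eqM30}). You should work in that setup.

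Second, and this is the real gap: your diagnosis of the convergence step is off. You do \emph{not} need to control, uniformly in $n$, the discrepancy between $\nu(I_{pn})$ and $\nu(I_p^n)$, and Rees's theorem on integral closures is not the relevant tool. Once the covolume formula is in hand, the semigroup attached to the family $\{I_p^n\}_n$ is generated at level~$1$ by $\Gamma(I_*)_p$, so its Newton--Okounkov body contains $\mathrm{conv}(\Gamma(I_*)_p)$. The required statement is then that $\tfrac{1}{p}\,\mathrm{conv}(\Gamma(I_*)_p)$ exhausts $\Delta(\Gamma(I_*))$ in volume as $p\to\infty$. This is a purely combinatorial ``Fujita approximation'' fact for semigroups satisfying conditions 1) and 2) of Theorem~\ref{Theorem3}; it follows from that theorem (cf.\ \cite{LM}, \cite{KK}) and uses no further ring-theoretic input. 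Analytic unramifiedness has already done its work earlier: it permits the reduction to complete domains via Lemma~\ref{LemmaM30} and, through Izumi's theorem, yields the linear bound of Lemma~\ref{Prop1} needed to set up the bounded semigroups. It plays no special role at the final convergence step.
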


Volume = Multiplicity formulas have been proven by Ein,  Lazarsfeld and Smith \cite{ELS}, Musta\c{t}\u{a} \cite{Mus} and by Lazarsfeld and Musta\c{t}\u{a} \cite{LM}. This last paper proves the formula when $R$ is essentially of finite type over an algebraically closed field $k$ with $R/m_R=k$. All of these assumptions are necessary in their proof.

The following theorem generalizes the Minkowski inequality for powers of $m_R$-primary ideals $I$ and $J$ 
found by Teissier \cite{T} and Rees and Sharp \cite{RS} (c.f. Section 17.7 \cite{SH}) to arbitrary graded families of $m_R$-primary ideals. Theorem \ref{TheoremM7} was proven for graded families of $m_R$-primary ideals in a regular local ring with algebraically closed residue field by Musta\c{t}\u{a} (Corollary 1.9 \cite{Mus}) and more recently by Kaveh and Khovanskii (Corollary 6.10 \cite{KK}).

\begin{Theorem}\label{TheoremM7} Suppose that $R$ is a Noetherian local ring of dimension $d$ with $\dim N(\hat R)<d$.
Suppose that $\{I_i\}$ and $\{J_i\}$ are graded families of $m_R$-primary ideals in $R$.  Then
$$
\left(\lim_{n\rightarrow \infty}\frac{\ell_R(R/I_n)}{n^d}\right)^{\frac{1}{d}}
+\left(\lim_{n\rightarrow \infty}\frac{\ell_R(R/J_n)}{n^d}\right)^{\frac{1}{d}}\ge
\left(\lim_{n\rightarrow \infty}\frac{\ell_R(R/I_nJ_n)}{n^d}\right)^{\frac{1}{d}}.
$$
\end{Theorem}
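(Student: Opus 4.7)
The overall strategy is to reduce to an analytically unramified ring, convert the three volume--type limits to limits of ordinary multiplicities via the Volume~=~Multiplicity formula (Theorem~\ref{Theorem2}), and then apply the classical Minkowski inequality of Teissier and Rees--Sharp to each pair $I_p$, $J_p$ before passing to the limit in $p$.

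First, observe that $\{I_nJ_n\}$ is itself a graded family of $m_R$-primary ideals, since $(I_mJ_m)(I_nJ_n)=I_mI_nJ_mJ_n\subset I_{m+n}J_{m+n}$. Hence by Theorem~\ref{Theorem1} all three limits in the statement exist. Since length of finite-length modules is preserved under completion, we may replace $R$ by $\hat R$. Put $N=N(\hat R)$ and $\bar R=\hat R/N$. Because $\dim N<d$, $\bar R$ is a reduced complete local ring of dimension $d$, hence analytically unramified. For any $m_R$-primary ideal $K$ the short exact sequence $0\to N/(N\cap K\hat R)\to \hat R/K\hat R\to \bar R/K\bar R\to 0$ gives
$$\ell_{\hat R}(\hat R/K\hat R)=\ell_{\bar R}(\bar R/K\bar R)+\ell_{\hat R}\bigl(N/(N\cap K\hat R)\bigr).$$
Since $I_1$ is $m_R$-primary, some $c>0$ satisfies $m_R^{c}\subset I_1$, whence $I_n\supset m_R^{cn}$, and Hilbert--Samuel applied to the finitely generated $\hat R$-module $N$ (of dimension at most $d-1$) yields
$$\ell_{\hat R}\bigl(N/(N\cap I_n\hat R)\bigr)\le \ell_{\hat R}(N/m_R^{cn}N)=O(n^{d-1}).$$
The analogous $O(n^{d-1})$ bound holds with $I_n$ replaced by $J_n$ or by $I_nJ_n$, so the three limits over $R$ coincide with their counterparts over $\bar R$, and it suffices to prove the inequality in $\bar R$.

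In the analytically unramified ring $\bar R$, Theorem~\ref{Theorem2} applied to each of the graded families $\{I_n\bar R\}$, $\{J_n\bar R\}$, $\{I_nJ_n\bar R\}$ gives
$$\lim_{n\to\infty}\frac{\ell_{\bar R}(\bar R/K_n\bar R)}{n^d/d!}=\lim_{p\to\infty}\frac{e_{K_p\bar R}(\bar R)}{p^d}\qquad(K=I,\ J,\ IJ).$$
For each fixed $p$, the classical Minkowski inequality applied to the $m_{\bar R}$-primary ideals $I_p\bar R$ and $J_p\bar R$ reads
$$e_{I_pJ_p\bar R}(\bar R)^{1/d}\le e_{I_p\bar R}(\bar R)^{1/d}+e_{J_p\bar R}(\bar R)^{1/d}.$$
Dividing by $p$, letting $p\to\infty$, substituting via Volume~=~Multiplicity on each side, and cancelling the common factor $(d!)^{1/d}$ produces the desired inequality.

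The argument is short once Theorems~\ref{Theorem1} and~\ref{Theorem2} are in hand. The step requiring the most care is the reduction from $R$ to $\bar R$, where one must confirm that the error term $\ell_{\hat R}(N/(N\cap I_n\hat R))$ is genuinely $o(n^d)$; this is precisely where the hypothesis $\dim N(\hat R)<d$ is essential, via the comparison $I_n\supset m_R^{cn}$ reducing matters to the Hilbert--Samuel polynomial of a module of dimension $\le d-1$.
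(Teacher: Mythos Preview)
Your argument is correct, and it takes a genuinely different route from the paper's own proof in Section~\ref{Section6}. The paper does \emph{not} invoke the Volume~=~Multiplicity formula (Theorem~\ref{Theorem2}); instead it works directly with the Okounkov-body machinery built in Section~\ref{Section5}. In the analytically irreducible case it expresses each limit as $[k':k]$ times a \emph{covolume} of a Newton--Okounkov region $\Delta(\Gamma(I_*))$ inside the cone $\Delta(\Gamma(R))$ (Theorem~\ref{TheoremM5} and equation~(\ref{eqM30})), proves the Minkowski-sum containment $\Delta(\Gamma(I_*))+\Delta(\Gamma(J_*))\subset\Delta(\Gamma(K_*))$, and then applies the ``Reversed Brunn--Minkowski inequality'' of Khovanskii--Timorin (Theorem~\ref{TheoremM21}) for covolumes of cobounded $C$-convex regions. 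The passage to general $R$ is then done by splitting over the minimal primes of $\hat R/N(\hat R)$ via Lemma~\ref{LemmaM30} and combining the resulting inequalities with the classical $\ell^d$ Minkowski inequality from \cite{HLP}.

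Your approach is shorter and more elementary: once Theorems~\ref{Theorem1} and~\ref{Theorem2} are available, you reduce the entire statement to the classical Teissier--Rees--Sharp inequality $e(I_pJ_p)^{1/d}\le e(I_p)^{1/d}+e(J_p)^{1/d}$ applied pointwise in $p$, with a limit at the end. This avoids both the convex-geometry input (Theorem~\ref{TheoremM21}) and the separate splitting over minimal primes. The trade-off is that you are treating Theorem~\ref{Theorem2} as a black box, whereas the paper's route stays inside the semigroup framework and makes the convex-geometric content of the inequality explicit; that structural picture is what one would want for refinements or equality characterizations, but for the bare inequality your argument is the more economical one.
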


The proof of Theorem \ref{TheoremM7} will be given in Section \ref{Section6}. The following theorem has many consequences, as
will be discussed below.

\begin{Theorem}\label{Theorem14}(Theorem 11.1 \cite{C6}) Suppose that $R$ is an analytically unramified  local ring of dimension $d>0$. 
Suppose that $\{I_i\}$ and $\{J_i\}$ are graded families of ideals in $R$. Further suppose that $I_i\subset J_i$ for all $i$ and there exists $c\in\ZZ_+$ such that
\begin{equation}\label{eqF60}
m_R^{ci}\cap I_i= m_R^{ci}\cap J_i
\end{equation}
 for all $i$. 
 Assume that if $P$ is a minimal prime of $R$ then $I_1\subset P$ implies $I_i\subset P$ for all $i\ge 1$.
 Then the limit
$$
\lim_{i\rightarrow \infty} \frac{\ell_R(J_i/I_i)}{i^{d}}
$$
exists.
\end{Theorem}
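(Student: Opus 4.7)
The plan is to reduce Theorem \ref{Theorem14} to Theorem \ref{Theorem1} by cutting off the (possibly non-$m_R$-primary) ideals $I_i,J_i$ with $m_R^{ci}$. First, I would establish the length identity
$$
\ell_R(J_i/I_i) \;=\; \ell_R\bigl(R/(I_i+m_R^{ci})\bigr) - \ell_R\bigl(R/(J_i+m_R^{ci})\bigr).
$$
Indeed, the modular law together with (\ref{eqF60}) gives $J_i\cap(I_i+m_R^{ci}) = I_i + (J_i\cap m_R^{ci}) = I_i + (I_i\cap m_R^{ci}) = I_i$, yielding $(J_i+m_R^{ci})/(I_i+m_R^{ci})\iso J_i/I_i$; in particular $\ell_R(J_i/I_i)<\infty$.

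Next, since $R$ is analytically unramified, $\hat R$ is reduced and $\dim N(\hat R)=0<d$, so Theorem \ref{Theorem1} is in force. The families $\{I_i+m_R^{ci}\}$ and $\{J_i+m_R^{ci}\}$ consist of $m_R$-primary ideals but are not themselves graded families, so Theorem \ref{Theorem1} does not apply to them directly. I would therefore introduce the genuine graded families of $m_R$-primary ideals
$$
\tilde I_i := \sum_{a+b=i} I_a\,m_R^{cb},\qquad \tilde J_i := \sum_{a+b=i} J_a\,m_R^{cb}
$$
(arising from products of the Rees algebras of $\{I_a\}$, resp.\ $\{J_a\}$, with that of the $m_R^c$-adic filtration; they contain $m_R^{ci}$, so each is $m_R$-primary). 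Theorem \ref{Theorem1} then yields the existence of $\lim_i \ell_R(R/\tilde I_i)/i^d$ and $\lim_i \ell_R(R/\tilde J_i)/i^d$.

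The hard step is the asymptotic comparison
$$
\ell_R(J_i/I_i) \;=\; \ell_R(R/\tilde I_i) - \ell_R(R/\tilde J_i) + o(i^d),
$$
which together with the two Theorem \ref{Theorem1} limits would finish the proof. This is where the hypothesis on minimal primes enters essentially: I would pass to $\hat R$ (reduced), decompose along the minimal primes $P$ of $\hat R$ of maximal dimension by an additivity formula for limits of lengths of graded families, and handle each $\hat R/P$ separately. For $P$ with $I_1\subset P$, the hypothesis forces $I_i\subset P$ for all $i$, killing the $I$-contribution modulo $P$; for $P$ with $I_1\not\subset P$, a short induction using $I_mI_n\subset I_{m+n}$ shows $I_i\not\subset P$ for all $i$, so the extra generators of $\tilde I_i$ (the mixed terms $I_a m_R^{cb}$ with $1\le a\le i-1$) match those of $\tilde J_i$ in $\hat R/P$ up to an Artin--Rees error. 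Making this cancellation precise, uniformly in $i$ and across all relevant minimal primes, is the main obstacle.
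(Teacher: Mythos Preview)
The proposal has a genuine gap at what you call ``the main obstacle,'' and it is not the route the paper takes. The comparison you need amounts to $\ell_R(\tilde J_i/\tilde I_i)=\ell_R(J_i/I_i)+o(i^d)$, equivalently that the two correction terms $\ell_R(\tilde I_i/(I_i+m_R^{ci}))$ and $\ell_R(\tilde J_i/(J_i+m_R^{ci}))$ agree to within $o(i^d)$. There is no reason either term is individually $o(i^d)$, and your Artin--Rees sketch does not produce the needed cancellation: over each $\hat R/P$ the ``mixed'' summands $I_a m_R^{c(i-a)}$ and $J_a m_R^{c(i-a)}$ differ by images of $(J_a/I_a)\otimes m_R^{c(i-a)}$, and nothing in the hypotheses bounds these uniformly in $a$ and $i$ except, circularly, the very limit you are trying to establish. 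The minimal-prime hypothesis tells you which $\hat R/P$ see a nonzero $I$-contribution; it does not control the size of $\tilde I_i/(I_i+m_R^{ci})$.

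The method the paper actually uses (cf.\ Sections~\ref{Section5} and~\ref{Section7}, following \cite{C6}) does not try to invoke Theorem~\ref{Theorem1} as a black box; it opens the proof. After passing to $\hat R$ and decomposing along minimal primes (this is where the minimal-prime hypothesis enters, exactly as in the proof of Theorem~\ref{TheoremB}), one works in an analytically irreducible domain and chooses $\beta$ with $K_{\beta i}\cap R\subset m_R^{ci}$ (Lemma~\ref{Prop1}). Then (\ref{eqF60}) forces $K_{\beta i}\cap J_i=K_{\beta i}\cap I_i$, so
\[
\ell_R(J_i/I_i)=\ell_R\bigl(J_i/(K_{\beta i}\cap J_i)\bigr)-\ell_R\bigl(I_i/(K_{\beta i}\cap I_i)\bigr),
\]
and each term on the right is expressed, exactly as in (\ref{eqM11}), as $\sum_t\#\Gamma^{(t)}_i$ for semigroups built directly from $\{J_i\}$ and $\{I_i\}$; Theorem~\ref{Theorem3} then gives each limit. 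The point is that the Okounkov-body machinery already computes $\ell_R(I_i/K_{\beta i}\cap I_i)$ for an \emph{arbitrary} graded family, so no auxiliary $m_R$-primary family like your $\tilde I_i$ is required. Your first reduction, the length identity via (\ref{eqF60}) and the modular law, is correct and is essentially the same first move; the divergence is in how the two resulting terms are handled.
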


Suppose that $R$ is a (Noetherian) local ring and $I,J$ are ideals in $R$. The generalized symbolic power $I_n(J)$ is defined by
$$
I_n(J)=I^n:J^{\infty}=\cup_{i=1}^{\infty}I^n:J^i.
$$

\begin{Theorem}(Corollary 11.4 \cite{C6}) Suppose that $R$ is an analytically unramified $d$-dimensional local ring. Let $s$ be the constant limit dimension
$s=\dim I_n(J)/I^n$ for $n\gg 0$. Suppose that $s<d$. Then 
$$
\lim_{n\rightarrow \infty}\frac{e_{m_R}(I_n(J)/I^n)}{n^{d-s}}
$$
exists.
\end{Theorem}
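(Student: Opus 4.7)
The plan is to apply Theorem \ref{Theorem14} in an appropriate localization of $R$, after decomposing $e_{m_R}(I_n(J)/I^n)$ via the associativity formula for Hilbert--Samuel multiplicity. Write $M_n := I_n(J)/I^n$, so that $\dim M_n = s$ for $n \gg 0$. Since $\mathrm{Supp}(M_n) \subseteq V(I+J)$, and since $I_n(J) = I^n : J^{N(n)}$ for some $N(n) \gg 0$, a standard argument using the stable behavior of the primary decomposition of $I^n$ (Brodmann's theorem) shows that the finite set
$$
\mathcal{P} := \{P \in \mathrm{Min}\,\mathrm{Supp}(M_n) : \dim R/P = s\}
$$
is independent of $n$ for $n$ sufficiently large. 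By the associativity formula,
$$
\frac{e_{m_R}(M_n)}{n^{d-s}} \;=\; \sum_{P \in \mathcal{P}} e_{m_R}(R/P) \cdot \frac{\ell_{R_P}((M_n)_P)}{n^{d-s}},
$$
so it suffices to show that $\lim_{n \to \infty} \ell_{R_P}((M_n)_P)/n^{d-s}$ exists for each $P \in \mathcal{P}$.

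Fix such a $P$. Since $R$ is analytically unramified, after first reducing to equidimensional components (using that contributions from components of dimension $<d$ are of lower order), $R$ is catenary with $\mathrm{ht}(P) = d - s$, and so $R_P$ has dimension $d - s$. Set
$$
A_n := I^n R_P, \qquad B_n := I_n(J)\, R_P \;=\; (IR_P)^n : (JR_P)^\infty.
$$
Then $\{A_n\}$ and $\{B_n\}$ are graded families of ideals in $R_P$ with $A_n \subseteq B_n$, and $B_n/A_n \cong (M_n)_P$ has finite length because $P$ is minimal in $\mathrm{Supp}(M_n)$. I would then apply Theorem \ref{Theorem14} in the $(d-s)$-dimensional local ring $R_P$ to the pair $(A_n, B_n)$ to conclude that $\lim_n \ell_{R_P}(B_n/A_n)/n^{d-s}$ exists. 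Summing over $P \in \mathcal{P}$ with the fixed weights $e_{m_R}(R/P)$ yields the desired limit.

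The main obstacle lies in verifying the technical hypotheses of Theorem \ref{Theorem14} in $R_P$. First, one needs $R_P$ to be analytically unramified, which typically requires invoking an excellence-type property of $R$ or arguing directly from the position of $P$ within $\mathrm{Spec}(R)$. Second, one must verify the uniform Artin--Rees condition $(PR_P)^{cn} \cap A_n = (PR_P)^{cn} \cap B_n$ for some constant $c$ independent of $n$; this should follow from a uniform Artin--Rees statement applied to the pair of graded families $\{I^n\}$, $\{I_n(J)\}$ in the analytically unramified ring $R$, and then localizing at $P$. The minimal-prime hypothesis of Theorem \ref{Theorem14} is automatic in the equidimensional case: if $\mathfrak{q}$ were a minimal prime of $R_P$ with $IR_P \subseteq \mathfrak{q}$ and $JR_P \subseteq \mathfrak{q}$, then both $I$ and $J$ would vanish on the corresponding irreducible component of $R$, forcing $\dim M_n$ to be at least the dimension of that component, contrary to $s < d$ when $R$ is equidimensional.
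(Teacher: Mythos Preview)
The paper does not give its own proof of this statement; it is simply quoted as Corollary~11.4 of \cite{C6}, so there is no in-paper argument to compare against. Your outline---associativity formula, stabilization of the top-dimensional minimal primes of $I_n(J)/I^n$ via Brodmann, then localization at each such $P$ and an appeal to Theorem~\ref{Theorem14} in $R_P$---is the natural strategy and is the one used in \cite{C6}.

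Two of your three ``obstacles'' are not really obstacles. The minimal-prime hypothesis of Theorem~\ref{Theorem14} is imposed only on the smaller family $\{A_n\}=\{I^nR_P\}$, and for powers of a fixed ideal it is automatic: $IR_P\subset\mathfrak q$ trivially implies $I^nR_P\subset\mathfrak q$. No equidimensionality argument is needed there. The uniform Artin--Rees condition $(PR_P)^{cn}\cap A_n=(PR_P)^{cn}\cap B_n$ follows exactly as in the proof of Theorem~\ref{TheoremC}: Swanson's linear bound gives primary decompositions $I^n=\bigcap q_i(n)$ with $\sqrt{q_i(n)}^{\,c_0n}\subset q_i(n)$, and since $P$ is \emph{minimal} in $\mbox{Supp}(M_n)$ the only components removed in passing from $I^nR_P$ to $I_n(J)R_P$ are $PR_P$-primary, whence $(PR_P)^{c_0n}$ already lies in each of them.

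The genuine gap is the one you flag first: analytic unramifiedness does not in general pass to localizations, and Theorem~\ref{Theorem14} requires it. The fix is to complete first. Since $\hat R$ is flat over $R$, one has $I_n(J)\hat R=(I\hat R)^n:(J\hat R)^{\infty}$ and $e_{m_R}(M_n)=e_{m_{\hat R}}(M_n\otimes_R\hat R)$, so nothing is lost by replacing $R$ with $\hat R$. Now $\hat R$ is complete, hence excellent and universally catenary, and reduced by hypothesis; therefore every localization $\hat R_Q$ is again analytically unramified, and the height computation $\dim \hat R_Q=d-s$ goes through (after discarding components of dimension $<d$, which contribute $0$ in the limit since $d-s\ge 1$). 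With this reduction in place your argument is complete.
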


This theorem was proven by Herzog, Puthenpurakal and Verma \cite{HPV} for ideals $I$ and $J$ in a $d$-dimensional local ring, with the assumption that
$\bigoplus_{n\ge 0}I_n(J)$ is a finitely generated $R$-algebra.

If $R$ is a local ring and $I$ is an ideal in $R$ then the saturation of $I$ is 
$$
I^{\rm sat}=I:m_R^{\infty}=\cup_{k=1}^{\infty}I:m_R^k.
$$

\begin{Corollary}\label{Corollary5}(Corollary 11.3 \cite{C6}) Suppose that $R$ is an analytically unramified local ring of dimension $d>0$ and  $I$ is an ideal in $R$. Then the limit
$$
\lim_{i\rightarrow \infty} \frac{\ell_R((I^i)^{\rm sat}/I^i)}{i^{d}}
$$
exists.

\end{Corollary}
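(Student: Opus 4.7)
The plan is to apply Theorem~\ref{Theorem14} to the pair of graded families
\[ I_i := I^i, \qquad J_i := (I^i)^{\rm sat} = I^i : m_R^\infty, \]
whence the existence of the limit in question follows directly from its conclusion.

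First I would verify the formal hypotheses. The family $\{I^i\}$ is graded in the obvious way. The family $\{(I^i)^{\rm sat}\}$ is graded as well: given $a \in (I^m)^{\rm sat}$ and $b \in (I^n)^{\rm sat}$ with $m_R^k a \subset I^m$ and $m_R^\ell b \subset I^n$, one has $m_R^{k+\ell} ab \subset I^{m+n}$, whence $ab \in (I^{m+n})^{\rm sat}$. The containment $I_i \subset J_i$ is immediate, and the minimal-prime hypothesis of Theorem~\ref{Theorem14} is automatic, since $I_1 = I \subset P$ forces $I^i \subset P$ for every $i \ge 1$.

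The crux is to verify the Artin--Rees condition (\ref{eqF60}): one must produce a single constant $c \in \ZZ_+$ with
\[ m_R^{ci} \cap (I^i)^{\rm sat} \subset I^i \qquad \text{for all } i \geq 1. \]
For each fixed $i$, an $i$-dependent such constant exists: $(I^i)^{\rm sat}/I^i$ is of finite length (it is $m_R$-coprimary), so applying the ordinary Artin--Rees lemma to the submodule $(I^i)^{\rm sat} \subset R$ with respect to the $m_R$-adic filtration, and combining with the fact that some power of $m_R$ annihilates the quotient, yields a $c_i$ with $m_R^{c_i} \cap (I^i)^{\rm sat} \subset I^i$. The substance is to show that $c_i$ may be chosen linear in $i$. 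I would obtain this uniform bound via a uniform Artin--Rees type theorem applied to the ideals $I$ and $m_R$ in $R$; the analytically unramified hypothesis is precisely the setting in which such uniform bounds are available.

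This uniform Artin--Rees step is the main obstacle; all remaining hypotheses of Theorem~\ref{Theorem14} are purely formal. Once (\ref{eqF60}) is in hand, Theorem~\ref{Theorem14} applies directly to $\{I_i\}$ and $\{J_i\}$, yielding the existence of $\lim_{i\to\infty} \ell_R((I^i)^{\rm sat}/I^i)/i^d$, as required.
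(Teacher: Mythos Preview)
Your approach is correct and matches the paper's: Corollary~\ref{Corollary5} is obtained precisely by applying Theorem~\ref{Theorem14} to $I_i=I^i$ and $J_i=(I^i)^{\rm sat}$, with the key linear bound $m_R^{ci}\cap (I^i)^{\rm sat}\subset I^i$ supplied by Swanson's theorem on linear growth of primary decompositions (Theorem~3.4 of \cite{Sw}), which the paper invokes explicitly in the proof of the more general Theorem~\ref{TheoremC}. One small correction: Swanson's result holds in any Noetherian ring, so the analytically unramified hypothesis is not what makes the uniform Artin--Rees step work---it enters only through Theorem~\ref{Theorem14} itself.
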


We now prove a very general theorem on epsilon multiplicity of modules over a local ring.
The proof requires a significant extension of Theorem \ref{Theorem14}.
The problem itself arises in the work of Kleiman, Ulrich and Validashti on equisingularity \cite{KUV}. Some preliminary parts of this work are in  \cite{UV} and \cite{Kl}. They define epsilon multiplicity as
a limsup, although it is of interest to know that it is in fact a limit. It is known by their work that the epsilon multiplicity is finite under very general conditions.

We show in Theorem \ref{TheoremC} that the epsilon multiplicity  actually exists as a limit, for modules over an 
arbitrary analytically unramified local ring $R$. 
 This includes the case when $R$ is the local ring of a reduced analytic space, which is of importance in singularity theory.

Suppose that $R$ is a $d$-dimensional,  analytically unramified local ring, and $E$ is a rank $e$
submodule of a free (finite rank) $R$-module $F=R^n$.
Let $B=R[F]$ be the symmetric algebra of $F$ over $R$, which is isomorphic to a standard graded polynomial ring $B=R[x_1,\ldots,x_n]=\bigoplus_{k\ge 0}F^k$  over $R$. We may identify $E$ with a submodule $E^1$ of $B^1$, and let $R[E]=\bigoplus_{n\ge 0}E^k$ be the $R$-subalgebra of $B$ generated by $E^1$ over $R$.

The epsilon multiplicity of $E$ is defined in \cite{UV} to be
$$
\epsilon(F/E)=\limsup_k\frac{(d+e-1)!}{k^{d+e-1}}\ell_R(H^0_{m_R}(F^k/E^k)).
$$
In the upcoming work of Kleiman, Ulrich and Validashti \cite{KUV}, it is shown that
$\epsilon(F/E)<\infty$ under very mild conditions; in particular,  the epsilon multiplicity is finite with the assumptions of Theorem \ref{TheoremC}.

We have natural $R$-module isomorphisms
$$
H^0_{m_R}(F^k/E^k)\cong E^k:_{F^k}m_R^{\infty}/E^k
$$
for all $k$.

When $I$ is an ideal in a local ring $R$, 
$$
(I^n)^{\rm sat}/I^n\cong H^0_{m_R}(R/I^n),
$$
and the epsilon multiplicity  is then
$$
\epsilon(I)=\limsup \frac{\ell_R(H^0_{m_R}(R/I^n))}{n^d/d!}.
$$
Thus Corollary \ref{Corollary5} shows that the epsilon multiplicity exists as a limit for an ideal $I$ in an arbitrary analytically unramified local ring.

An example in \cite{CHST} shows that even in the case when $E$ is an ideal $I$ in a regular local ring $R$, the  limit may be irrational.

\begin{Theorem}\label{TheoremC} Suppose that $R$ is a $d$-dimensional  analytically unramified local ring, and $E$ is a rank $e$
submodule of a free (finite rank) $R$-module $F=R^n$.
Let $B=R[F]$ be the symmetric algebra of $F$ over $R$, which is isomorphic to a standard graded polynomial ring $B=R[x_1,\ldots,x_n]=\bigoplus_{k\ge 0}F^k$  over $R$. We may identify $E$ with a submodule $E^1$ of $B^1$, and let $R[E]=\bigoplus_{n\ge 0}E^k$ be the $R$-subalgebra of $B$ generated by $E^1$ over $R$.  Suppose that $\epsilon(F/E)<\infty$. Then the limit
$$
\epsilon(F/E)=\lim_{k\rightarrow\infty} \frac{\ell_R(E^k:_{F^k}m_R^{\infty}/E^k)}{k^{d+e-1}}\in \RR
$$
exists as a limit.
\end{Theorem}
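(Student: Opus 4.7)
The plan is to reformulate the question as one about saturated powers of a homogeneous ideal in a standard graded polynomial ring over $R$, and then invoke a graded generalization of Theorem \ref{Theorem14}. First I would set $B = R[x_1,\dots,x_n] = \bigoplus_{k\ge 0} F^k$ with its standard grading, and let $J = E\cdot B$ be the homogeneous ideal of $B$ generated by $E^1\subset B^1$. A direct calculation gives $(J^k)_m = E^k\cdot B^{m-k}$ for $m\ge k$, so in particular $(J^k)_k = E^k$. Because $m_R \subset B^0$ acts homogeneously, saturation commutes with passage to graded pieces, and hence
\[
(E^k :_{F^k} m_R^\infty)/E^k \;=\; \bigl((J^k :_B m_R^\infty)/J^k\bigr)_k .
\]
It therefore suffices to show that $\lim_{k\to\infty}\ell_R\bigl(((J^k :_B m_R^\infty)/J^k)_k\bigr)/k^{d+e-1}$ exists.

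The main engine I would need is a graded version of Theorem \ref{Theorem14}: for graded families $\{I_k\}$ and $\{L_k\}$ of homogeneous ideals in a standard graded $R$-algebra $C$, with $I_k\subset L_k$, with an Artin-Rees type equality $m_R^{ck}C\cap I_k = m_R^{ck}C\cap L_k$, and with an analogue of the minimal-prime condition of Theorem \ref{Theorem14}, the limit
\[
\lim_{k\to\infty}\frac{\ell_R((L_k/I_k)_k)}{k^{d+s-1}}
\]
exists, where $s$ is the relevant dimension parameter (here $s=e$). The proof of this extension would follow the strategy of \cite{C6}: first reduce to the case where $\hat R$ is reduced by completing (using that $R$ is analytically unramified and that completion preserves finite length); then split along the minimal primes of $\hat R$ to reduce to a complete local domain, which is where the minimal-prime hypothesis is used; and finally translate the problem into the growth of an integral semigroup attached to the families via a divisorial valuation, applying a Khovanskii--Kaveh / Lazarsfeld--Musta\c{t}\u{a} type semigroup-growth theorem.

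To apply this extension with $C = B$, $I_k = J^k$, $L_k = J^k:_B m_R^\infty$, I would need to verify the hypotheses: $\{L_k\}$ is a graded family because $(J^k:m_R^\infty)(J^l:m_R^\infty)\subset J^{k+l}:m_R^\infty$; the Artin-Rees type equality follows from a uniform Artin--Rees argument in $B$ applied to the powers of $J$; and the minimal-prime condition is automatic since $J\subset P$ for a minimal prime $P$ of $B$ implies $J^k\subset P$ for every $k\ge 1$. The hard part will be establishing the graded extension of Theorem \ref{Theorem14} itself: the proof in \cite{C6} concerns two graded families in the local ring $R$ and a semigroup in $\ZZ^{d+1}$, whereas in the graded setting the families live in $C=B$ and the natural semigroup acquires an extra coordinate corresponding to the grading degree; one must then count lattice points on the diagonal slice where this degree equals the power index $k$. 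Ensuring that the requisite finite-generation and convex-geometric hypotheses hold uniformly in $k$, so that the Khovanskii-type counting argument yields a limit with the correct normalization $k^{d+e-1}$, is where the bulk of the technical work will lie.
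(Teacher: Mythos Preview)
Your plan is essentially the same as the paper's: reformulate via the homogeneous ideal $J=E\cdot B$, use Swanson's uniform primary decomposition to obtain the Artin--Rees equality $(m_R^{ck}F^k)\cap E^k=(m_R^{ck}F^k)\cap(E^k:_{F^k}m_R^\infty)$, reduce to the analytically irreducible case by splitting along minimal primes of $\hat R$, and run an Okounkov/Kaveh--Khovanskii semigroup argument with an extra coordinate for the $B$-grading. The paper packages the ``graded Theorem \ref{Theorem14}'' you describe as its Theorems \ref{TheoremA} and \ref{TheoremB}, applied separately to the two graded subalgebras $\bigoplus E^k$ and $\bigoplus (E^k:_{F^k}m_R^\infty)$ of $B$, and then subtracts via a short exact sequence.

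Two points to tighten. First, the minimal-prime condition is about minimal primes $P_i$ of $R$ (extended to $P_iB$), not minimal primes of $B$; and while it is indeed automatic for the family $\{J^k\}$, for the saturation family it is the rank hypothesis $\mathrm{rank}(E)=e>0$ that makes it vacuous, since $E\otimes_R R_{P_i}$ has rank $e$ forces $E^1\not\subset P_iF^1$ and hence $(E^1:_{F^1}m_R^\infty)\not\subset P_iF^1$. Second, you do not say where the two finiteness inputs enter. In the paper's setup the semigroup argument requires an a priori bound $\ell_R(A^k/(m_R^{ck}B)\cap A^k)<\gamma_c k^{d+e-1}$ for each of the two algebras: for $A^k=E^k$ this is obtained from $\dim R[E]=d+e$ (Lemma \ref{Lemma30}) via a bigraded Hilbert-function argument (Lemma \ref{Lemma31}), while for $A^k=E^k:_{F^k}m_R^\infty$ it comes from combining that bound with the hypothesis $\epsilon(F/E)<\infty$. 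You should make explicit that both the rank assumption and the finite-$\epsilon$ assumption are consumed precisely here.
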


Let $P_1,\ldots, P_s$ be the minimal primes of $R$. Since $R$ is reduced,   $\cap P_i=(0)$ and the total quotient field of $R$ is isomorphic to 
$\bigoplus_{i=1}^sR_{P_i}$, with $R_{P_i}\cong (R/P_i)_{P_i}$. The assumption that $\mbox{rank}(E)=e$ is simply that $E\otimes_RR_{P_i}$ has rank $e$ for all $i$.

Theorem \ref{TheoremC} has been proven with the additional assumption that $R$ is essentially of finite type over a field of characteristic zero in many cases, in the following papers. It was first proven 
 in the case when $E=I$ is a homogeneous ideal and $F=R$ is a  standard graded normal $k$-algebra in our paper \cite{CHST} with H\`a, Srinivasan and Theodorescu. 
It is proven when $R$ is regular, essentially of finite type over a field of characteristic zero, $E=I$ is an ideal in $F=R$, and the singular locus of $\mbox{Spec}(R/I)$ is the maximal ideal in our paper \cite{CHS} with Herzog and Srinivasan. 
 
Kleiman \cite{Kl}   proved Theorem \ref{TheoremC} with the assumptions that 
$R$ is normal, essentially of finite type over an algebraically closed  field $k$ of characteristic zero with $R/m_R=k$  and  with the additional assumption that   $E$ is a direct summand of $F$ locally at every nonmaximal prime of $R$. 
Kleiman makes ingenious use of Grassmanians in his proof.

Theorem \ref{TheoremC} is proven with the additional assumptions that 
$R$ is essentially of finite type over a field $k$ of characteristic zero and $R$ has depth $\ge 2$ in \cite{C2}.

  The theorem is proven for $E$ of low analytic deviation in \cite{CHS}, for the case of ideals, and by Ulrich and Validashti \cite{UV}  for the case of modules; in the case of low analytic deviation, the limit is always zero. A generalization of this problem to the case of saturations with respect to non $\mathfrak m$-primary ideals is investigated by Herzog, Puthenpurakal and Verma in \cite{HPV}; they show that an appropriate limit exists for monomial ideals.

\vskip .2truein

We now turn to consideration of limits for families of ideals defined by valuations.
Suppose that $R$ is a Noetherian local domain with quotient field $K$.  A valuation $\nu$ of $K$ is divisorial if the valuation ring $V_{\nu}$ of $\nu$ is essentially of finite type over $R$.  A valuation $\nu$ of $K$ dominates $R$ if $R\subset V_{\nu}$ ($\nu$ is nonnegative on $R$) and $m_{\nu}\cap R=m_R$.

\begin{Lemma}\label{LemmaRees} Suppose that $R$ is an analytically irreducible local ring and $\omega$ is a divisorial valuation of the quotient field of $R$ which dominates $R$. Let
$$
I_n(\omega)=\{f\in R\mid \nu(f)\ge n\}.
$$
Then there exists a positive integer $c$ such that 
$$
m_R^{cn}\subset I_n(\omega)
$$
for all $n\in \NN$.
\end{Lemma}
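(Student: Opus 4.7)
My approach will be a direct elementary computation; I do not expect a serious obstacle, and in fact I believe the stronger statement $m_R^n \subset I_n(\omega)$ (i.e.\ $c = 1$) holds, and that the analytic irreducibility hypothesis is not actually needed for this direction of the inclusion.

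First I would unpack the divisorial hypothesis. Since $V_\omega$ is essentially of finite type over the Noetherian ring $R$, it is itself Noetherian, and a Noetherian valuation ring is a DVR. Thus $\omega$ takes values in $\ZZ$ with smallest positive value $1$. Because $\omega$ dominates $R$, every nonzero element of $m_R$ lies in $m_\omega$, so $\omega(x)\ge 1$ for every such $x$.

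Next I would fix generators $x_1,\ldots,x_r$ of $m_R$. Then for each $n\ge 0$ the ideal $m_R^n$ is generated as an $R$-module by the monomials $x^I = x_1^{I_1}\cdots x_r^{I_r}$ with $I_1+\cdots+I_r = n$, and any such monomial satisfies
$$
\omega(x^I) \;=\; \sum_{i=1}^{r} I_i\,\omega(x_i) \;\ge\; n,
$$
since each $\omega(x_i)\ge 1$. Given any $f\in m_R^n$, writing $f = \sum_I a_I x^I$ with $a_I\in R$ and using $\omega(a_I)\ge 0$, we conclude $\omega(f)\ge \min_I \omega(a_I x^I)\ge n$, whence $f\in I_n(\omega)$.

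This gives $m_R^n \subset I_n(\omega)$ for every $n\in\NN$, so the lemma holds with $c=1$. The analytic irreducibility assumption appears to be stated for later use in the paper rather than for this direction; it is needed for the much deeper converse inclusion $I_n(\omega)\subset m_R^{\lfloor n/c'\rfloor}$, which is Izumi's theorem and does genuinely require that $\hat R$ be a domain.
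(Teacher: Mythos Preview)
Your argument is correct, and indeed it establishes the lemma with $c=1$ and without the analytic irreducibility hypothesis. The paper takes a substantially heavier route: it invokes Rees' version of Izumi's theorem to show that the $\omega$-adic topology on $R$ is linearly equivalent to the topology defined by the Rees valuations of $m_R$, and then cites a further result of Rees to pass to the $m_R$-adic topology. That argument actually yields two-sided linear equivalence of the $\omega$-adic and $m_R$-adic filtrations, i.e.\ in addition to $m_R^{cn}\subset I_n(\omega)$ it also gives a bound of the form $I_{c'n}(\omega)\subset m_R^{n}$; this reverse containment genuinely needs analytic irreducibility and is the content of Izumi's theorem (compare Lemma~\ref{Prop1} in the paper, which is precisely this harder direction for the auxiliary valuation $\nu$).

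So the trade-off is clear: your approach is the right one for the lemma exactly as stated---it is a two-line observation that $m_R\subset m_\omega$ and $\omega$ is $\ZZ$-valued, hence $\omega(m_R^n)\ge n$---while the paper's proof is really proving the stronger two-sided comparison and then only recording the easy half. Your remark that the analytic irreducibility hypothesis is idle for this direction, and is presumably carried along for the sake of the applications (Theorem~\ref{Theorem100} requires $R$ analytically irreducible anyway), is well taken.
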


\begin{proof}
 Let $\{\nu_i\}$ be the Rees valuations of $m_R$. The $\nu_i$ extend uniquely to the Rees valuations of $m_{\hat R}$. By Rees' version of Izumi's theorem, \cite{Re}, the topologies defined on $R$ by $\omega$ and the $\nu_i$ are linearly equivalent. Let $\overline \nu_{m_R}$ be the reduced order of $m_R$. By the Rees valuation theorem (recalled in \cite{Re}),
$$
\overline \nu_{m_R}(x)=\min_i\left\{\frac{\nu_i(x)}{\nu_i(m_R)}\right\}
$$
for all $x\in R$, so the topology defined by $\omega$ on $R$ is linearly equivalent to the topology defined  by $\overline\nu_{m_R}$.
The $\overline \nu_{m_R}$ topology is linearly equivalent to the $m_R$-topology by \cite{R1}, since $R$ is analytically unramified.
Thus the lemma  is established.
\end{proof}
 
\begin{Theorem}\label{Theorem100} Suppose that $R$ is an  analytically irreducible local ring of dimension $d$ and $\nu_1,\ldots,\nu_r$ are divisorial valuations of the quotient field of $R$, such that each $\nu_i$ is nonnegative on $R$. Suppose that $a_1,\ldots,a_r\in \NN$ and let
$$
I_n=\{f\in R\mid \nu_i(f)\ge a_in\mbox{ for }1\le i\le r\}.
$$
Then the limit
$$
\lim_{n\rightarrow\infty}\frac{\ell_R(I_n^{\rm sat}/I^n)}{n^d}
$$
exists.
\end{Theorem}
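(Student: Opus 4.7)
The plan is to apply Theorem \ref{Theorem14} to the pair of graded families $\{I_n\}$ and $\{I_n^{\rm sat}\}$. Since $R$ is analytically irreducible, $\hat R$ is a domain, so $R$ is itself a domain and analytically unramified; the unique minimal prime of $R$ is $(0)$, which makes the minimal prime hypothesis of Theorem \ref{Theorem14} automatic.

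First, I would describe $I_n^{\rm sat}$ explicitly. Partition the indices into $S = \{i : \nu_i \text{ dominates } R\}$ and $T = \{1,\ldots,r\}\setminus S$; for $i \in T$ the center $m_{\nu_i}\cap R$ is properly contained in $m_R$, so there exists $h_i \in m_R$ with $\nu_i(h_i) = 0$. I claim
$$
I_n^{\rm sat} = J_n := \{f \in R : \nu_i(f) \geq a_i n \text{ for all } i \in T\}.
$$
The inclusion $I_n^{\rm sat} \subset J_n$ follows by testing $f \in I_n^{\rm sat}$ against $h_i^k \in m_R^k$ for each $i \in T$. For the reverse inclusion, note that for $i \in S$ the quantity $\mu_i := \min\{\nu_i(h) : h \in m_R, h \neq 0\}$ is a positive integer; choosing $k$ so that $k\mu_i \geq a_i n$ for every $i \in S$ and $f \in J_n$ gives $\nu_i(fg) \geq a_i n$ for all $g \in m_R^k$ and all $i$, hence $f m_R^k \subset I_n$. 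Both $\{I_n\}$ and $\{J_n\}$ are clearly graded families of ideals and $I_n \subset J_n$.

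The main step is to verify the linear-equivalence hypothesis (\ref{eqF60}). For each $i \in S$, the valuation $\nu_i$ dominates the analytically irreducible ring $R$, so Lemma \ref{LemmaRees} applied to $\nu_i$ produces a constant $c_i \in \ZZ_+$ with $m_R^{c_i n} \subset I_n(\nu_i)$ for all $n$. Set $c := \max_{i \in S}(c_i a_i)$ (and $c := 1$ if $S = \emptyset$, in which case $I_n = J_n$ and the theorem is trivial). For $f \in m_R^{cn} \cap J_n$ and $i \in S$, the inclusion $m_R^{cn} \subset m_R^{c_i(a_i n)} \subset I_{a_i n}(\nu_i)$ gives $\nu_i(f) \geq a_i n$, while the defining condition of $J_n$ gives the same inequality for $i \in T$. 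Thus $f \in I_n$, and $m_R^{cn} \cap I_n^{\rm sat} = m_R^{cn} \cap I_n$ for every $n$.

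With all hypotheses of Theorem \ref{Theorem14} verified, the limit $\lim_{n\to\infty} \ell_R(I_n^{\rm sat}/I_n)/n^d$ exists. The main obstacle I foresee is the explicit identification of $I_n^{\rm sat}$ as itself a valuation-defined graded family, since this is what permits the uniform Izumi-type bound of Lemma \ref{LemmaRees} --- available only for the dominating valuations in $S$ --- to supply the required linear equivalence; the non-dominating valuations impose identical conditions on $I_n$ and $I_n^{\rm sat}$, so they do not obstruct (\ref{eqF60}).
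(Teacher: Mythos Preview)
Your proof is correct and follows essentially the same approach as the paper: partition the valuations into those dominating $R$ and those not, use Lemma~\ref{LemmaRees} on the dominating ones to obtain the linear bound $m_R^{cn}\subset I_{a_in}(\nu_i)$, observe that $I_n^{\rm sat}$ is given by the intersection over the non-dominating valuations, and conclude via Theorem~\ref{Theorem14}. The paper's argument is slightly terser---it invokes the fact that each $I_n(\nu_i)$ is $p_i$-primary (where $p_i=I_1(\nu_i)$) to identify the saturation, whereas you verify $I_n^{\rm sat}=J_n$ by a direct element-wise check---but the content is the same.
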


\begin{proof} For $1\le i\le r$, let
$$
p_i=I_1(\nu_i).
$$
By the definition of a valuation, the $p_i$ are prime ideals, and the ideals $I_n(\nu_i)$ are $p_i$-primary for all $n$ and $i$. We can reindex  the $\nu_i$ if necessary so that $p_i= m_R$ for $s< i\le r$ and $p_i\ne m_R$ for $1\le i\le s$.
By Lemma \ref{LemmaRees}, there exists a positive integer $c$ such that 
$$
m_R^{cn}\subset I_{na_i}(\nu_i)
$$
for $1\le i\le s$. Thus for all $n\in \NN$,
$$
I_n\cap m_R^{cn}=\left(\cap_{i=1}^r I_{a_in}(\nu_i)\right)\cap m_R^{cn}=\left(\cap_{i=1}^sI_{a_in}(\nu_i)\right)\cap m_R^{cn} =I_n^{\rm sat}\cap m_R^{cn}.
$$
We now apply Theorem \ref{Theorem14} to obtain the conclusions of this Theorem.
\end{proof}

The following corollary generalizes to excellent normal local rings the proposition on page 2 of \cite{Ful}, which shows that the ``local volume'' of a line bundle exists as a limit when $R$ is the local ring of a closed point of a normal  algebraic variety over an algebraically closed field.

\begin{Corollary}\label{CorollaryM40} Suppose that $R$ is an excellent normal local ring of dimension $\ge 2$, $\pi:X\rightarrow \mbox{spec}(R)$ is a proper birational map
with $X$ being normal, and $D$ is a Weil divisor on $X$. Then 
$$
\lim_{n\rightarrow\infty} \frac{\ell_R(H^1_{m_R}(\Gamma(X,\mathcal O_X(nD))))}{n^d}
$$
exists.
\end{Corollary}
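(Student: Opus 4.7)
My plan is to realize $H^1_{m_R}(M_n)$, where $M_n := \Gamma(X, \mathcal{O}_X(nD))$, as the saturation quotient of an explicit graded family of valuation ideals in $R$, and then invoke Theorem \ref{Theorem100}. Let $K$ denote the fraction field of $R$. Since $\mathcal{O}_X(nD)$ is a rank one reflexive sub-$\mathcal{O}_X$-module of the constant sheaf $\underline{K}$, its global sections embed in $K$, and
$$M_n = \{f \in K : \nu_F(f) \ge -n D(F) \text{ for every prime divisor } F \text{ of } X\},$$
where $\nu_F$ is the divisorial valuation of $K$ at $F$ and $D(F)$ is the coefficient of $F$ in $D$. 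Because $R$ is normal of dimension at least two (hence depth at least two) and $M_n$ is torsion free over $R$, the local cohomology long exact sequence combined with flat base change along the open immersion $U := \mathrm{Spec}(R) \setminus \{m_R\} \hookrightarrow \mathrm{Spec}(R)$ yields
$$H^1_{m_R}(M_n) \cong \Gamma(\pi^{-1}(U), \mathcal{O}_X(nD))/M_n,$$
and the valuative criterion identifies $\Gamma(\pi^{-1}(U), \mathcal{O}_X(nD))$ with
$$M_n^{\mathrm{sat}} := \{f \in K : \nu_F(f) \ge -n D(F) \text{ for every } F \text{ whose center on } R \text{ is not } m_R\},$$
since the prime divisors of $X$ contained in $\pi^{-1}(m_R)$ are precisely the $m_R$-centered ones.

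\textbf{Reduction to ideals of $R$.} Next I will choose $g \in R \setminus \{0\}$ satisfying $\nu_F(g) \ge D(F)$ for every prime divisor $F$ in the support of the positive part of $D$; such $g$ exists because each of these finitely many conditions cuts out a nonzero ideal of the domain $R$. Set $I_n := g^n M_n$. For any $f \in M_n^{\mathrm{sat}}$ and any non-$m_R$-centered prime divisor $F$ of $X$, $\nu_F(g^n f) \ge n(\nu_F(g) - D(F)) \ge 0$. Normality of $R$ gives a bijection between height one primes of $R$ and non-exceptional prime divisors of $X$, all of which are non-$m_R$-centered, and this bijection forces $g^n M_n^{\mathrm{sat}} \subseteq R$; in particular $I_n \subseteq R$. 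A direct verification then shows $I_n^{\mathrm{sat}} := I_n :_R m_R^\infty = g^n M_n^{\mathrm{sat}}$, so multiplication by $g^n$ is an $R$-module isomorphism $M_n^{\mathrm{sat}}/M_n \cong I_n^{\mathrm{sat}}/I_n$.

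\textbf{Applying Theorem \ref{Theorem100}.} The family $\{I_n\}$ is graded because $M_n M_m \subseteq M_{n+m}$ implies $I_n I_m \subseteq I_{n+m}$, and the valuation description of $M_n$ gives
$$I_n = \{h \in R : \nu_{F_i}(h) \ge n c_i \text{ for } i = 1, \ldots, r\},$$
where $F_1, \ldots, F_r$ is the finite list of prime divisors of $X$ appearing in $D$ or in the divisor of $g$ on $X$, and $c_i := \nu_{F_i}(g) - D(F_i) \in \NN$. Each $\nu_{F_i}$ is a divisorial valuation of $K$ nonnegative on $R$, and $R$ is analytically irreducible since it is excellent and normal (so $\hat R$ is a normal local ring, hence a domain). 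Theorem \ref{Theorem100} therefore applies and gives the existence of $\lim_{n\to\infty} \ell_R(I_n^{\mathrm{sat}}/I_n)/n^d = \lim_{n\to\infty} \ell_R(H^1_{m_R}(M_n))/n^d$. The main obstacle is the careful bookkeeping of which prime divisors of $X$ impose constraints where: $M_n^{\mathrm{sat}}$ is obtained by dropping exactly the $m_R$-centered constraints, while the translation into ideals of $R$ via multiplication by $g^n$ is controlled by the non-$m_R$-centered ones, and the normality and dimension hypotheses on $R$ are precisely what reconcile the two pictures.
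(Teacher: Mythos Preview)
Your proof is correct and follows essentially the same strategy as the paper's: twist $\Gamma(X,\mathcal O_X(nD))$ by the $n$-th power of a fixed element so that it becomes a graded family of valuation ideals $I_n\subset R$, identify $H^1_{m_R}$ with $I_n^{\rm sat}/I_n$, and invoke Theorem~\ref{Theorem100}. The only cosmetic difference is the order of operations: the paper first twists (choosing $f\in\Gamma(X,\mathcal O_X(-D))$ so that $f\mathcal O_X(D)=\mathcal O_X(-E)$ with $E$ effective) and then uses the standard depth-$\ge 2$ identification $H^1_{m_R}(I_n)\cong I_n^{\rm sat}/I_n$, whereas you first compute $H^1_{m_R}(M_n)$ as sections over $\pi^{-1}(U)$ modulo global sections and then twist; your element $g\in R$ with $\nu_F(g)\ge D^+(F)$ plays the same role as the paper's $f$.
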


\begin{proof} Let $K$ be the quotient field of $R$. We make use of the fact that if $F$ is a Weil divisor on $X$, then there is an associated rank 1 reflexive sheaf on $X$ which is denoted by $\mathcal O_X(F)$. It has the property that
$$
\Gamma(X,\mathcal O_X(F))=\{g\in K\mid (g)_X+F\ge 0\},
$$
where $(g)_X$ is the divisor of $G$ on $X$. Let $f\in \Gamma(X,\mathcal O_X(-D))$ be nonzero. Then
$f\mathcal O_X(D)=\mathcal O_X(-E)$ for some effective divisor $E$ on $X$, and thus there are induced $R$-module isomorphisms
$$
\Gamma(X,\mathcal O_X(nD))\stackrel{f^n}{\rightarrow} \Gamma(X,\mathcal O_X(-nE))
$$
for all $n$. Thus the local cohomology modules $H^1_{m_R}(\Gamma(X,\mathcal O_X(nD))$ and 
$H^1_{m_R}(\Gamma(X,\mathcal O_X(-nE))$ are isomorphic as $R$-modules for all $n$.
We have that $I_n:=\Gamma(X,\mathcal O_X(-nE))\subset R$ for all $n\in \NN$, since $R$ is normal and $E$ is effective.
Thus $\{I_n\}$ is a graded family of ideals on $R$. Since $R$ has depth $\ge 2$, we have that $H^1_{m_R}(I_n)\cong I_n^{\rm sat}/I_n$ for all $n$. Writing $E=\sum_{i=1}^r a_iE_i$ where $a_i\in\NN$ and $E_i$ are prime divisors on $X$ for $1\le i\le r$,
we let $\nu_i$ be the divisorial valuation of $K$ associated to $E_i$.
Then we see that
$$
I_n=\{g\in R\mid \nu_i(g)\ge a_i\mbox{ for }1\le i\le r\}.
$$
The existence of the limit now follows from Theorem \ref{Theorem100}, since an excellent, normal local ring is analytically irreducible (Scholie 7.8.3 (v) \cite{EGAIV}).
\end{proof}

\section{Limits of lengths of filtrations of ideals and first differences}\label{Section4}

In this section, we explore the possibility of further polynomial like behavior of the length $\ell_R(R/I_n)$ for large $n$.
Our basic conclusion is that Theorem \ref{Theorem1} is the strongest general statement in this direction that is true. 

For $x\in\RR$, $\lceil x\rceil$ is the smallest integer $n$ such that $x\le n$.

\begin{Theorem}\label{TheoremN1} Suppose that $R$ is a local ring of dimension $d>0$ with nilradical $N(R)$. Suppose that for any filtration 
$\{I_n\}$ of $R$ by $m_R$-primary ideals in $R$,
there exists some arithmetic sequence $\{am+b\}$ such that
the limit
 $$
\lim_{m\rightarrow \infty}\frac{\ell_R(R/I_{am+b})}{(am+b)^d}
$$
exists. 
 Then $\dim N(R)<d$.
\end{Theorem}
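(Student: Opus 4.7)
I prove the contrapositive: assuming $\dim N(R)=d$, I construct a single filtration $\{I_n\}$ of $m_R$-primary ideals for which $\ell_R(R/I_n)/n^d$ fails to converge along every arithmetic progression. The idea is to make this normalized length oscillate between two distinct values on intervals of length tending to infinity; then for any common difference $a$, eventually both kinds of interval exceed $a$, the progression visits both regimes, and the limit cannot exist.

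First I extract a usable nilpotent. Since $\dim N(R)=d$, there exists $y\in N(R)$ with $\operatorname{ann}_R(y)=Q_0$ prime and $\dim R/Q_0=d$; because $Q_0$ is prime and $y^k=0$ for some $k$, the relation $y\cdot y^{k-1}=0$ together with primeness of $Q_0$ forces $y\in Q_0=\operatorname{ann}_R(y)$, i.e.\ $y^2=0$. A dimension chase shows that $Q_0$ must be a minimal prime of $R$, so $\bar R:=R/Q_0$ is a $d$-dimensional local domain with maximal ideal $\bar m$, multiplication by $y$ gives $yR\cong\bar R$ as $R$-modules, and the associativity formula yields $e(m_R)\ge 2e(\bar m)>0$. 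Next I construct a function $f:\NN\to\NN$ with $f(n)\le n$, non-decreasing with unit steps, satisfying the mixed subadditivity $f(n+m)\le\min(f(n)+m,f(m)+n)$, and such that $f(n)/n$ accumulates at two values $\alpha\in(0,1)$ and $1$ on intervals of length $\to\infty$. Writing $f=\mathrm{id}-g$, this amounts to building $g$ non-decreasing with unit steps by alternating long ``constant'' stretches (where $f$ slopes upward and the ratio climbs towards $1$) with long ``slope-one'' stretches (where $f$ is held constant and the ratio decays towards $\alpha$), with breakpoints chosen recursively so both sorts of block diverge in length.

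With these ingredients I set $I_n:=m_R^n+y\,m_R^{f(n)}$ for $n\ge 1$. Each $I_n$ is $m_R$-primary because $I_n\supset m_R^n$; the containment $I_{n+1}\subset I_n$ follows from $m_R^{n+1}\subset m_R^n$ together with $m_R^{f(n+1)}\subset m_R^{f(n)}$; and using $y^2=0$ the product reduces to $I_nI_m=m_R^{n+m}+y\,m_R^{\min(n+f(m),\,f(n)+m)}$, which lies in $I_{n+m}$ precisely because $f(n+m)\le\min(f(n)+m,f(m)+n)$. From the exact sequence $0\to I_n/m_R^n\to R/m_R^n\to R/I_n\to 0$, the identification $I_n/m_R^n\cong y\,m_R^{f(n)}/(y\,m_R^{f(n)}\cap m_R^n)$, the isomorphism $yR\cong\bar R$ (under which $y\,m_R^{f(n)}$ corresponds to $\bar m^{f(n)}$), and Artin--Rees applied to $yR\cap m_R^{\bullet}$, I obtain
\[
\frac{\ell_R(R/I_n)}{n^d}=\frac{e(m_R)-e(\bar m)\bigl(1-(f(n)/n)^d\bigr)}{d!}+o(1).
\]
Since $e(\bar m)>0$ and $\alpha<1$, the two limit values $[e(m_R)-e(\bar m)(1-\alpha^d)]/d!$ and $e(m_R)/d!$ are distinct. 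By construction $f(n)/n$ is arbitrarily close to $\alpha$ and to $1$ on intervals of length tending to infinity, so $\ell_R(R/I_n)/n^d$ does the same, and every arithmetic progression $\{am+b\}$ eventually hits both regimes.

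The main obstacle is the displayed length formula for a general local ring $R$, where the clean direct-sum decomposition $R=S\oplus yS$ available in the model ring $k[[x_1,\dots,x_d,y]]/(y^2)$ is no longer at hand. One must use the isomorphism $yR\cong\bar R$ (whose $d$-dimensionality is exactly the content of the hypothesis $\dim N(R)=d$) together with an Artin--Rees estimate $yR\cap m_R^n=m_R^{n-c}(yR\cap m_R^c)$ to pin down the leading $n^d$-coefficient of $\ell_R(I_n/m_R^n)$, with all lower-order terms absorbed into the $o(1)$. A secondary subtlety is that the mixed subadditivity $f(n+m)\le\min(f(n)+m,f(m)+n)$ is strictly weaker than ordinary subadditivity and so escapes Fekete's lemma --- this is exactly what permits $f(n)/n$ to oscillate, but the oscillating $f$ must be constructed by hand with superlinearly spaced breakpoints to defeat every arithmetic progression.
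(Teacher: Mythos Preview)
Your proof is correct and follows essentially the same strategy as the paper's: extract a square-zero nilpotent $y$ with annihilator a prime of dimension $d$, set $I_n = m_R^n + y\,m_R^{f(n)}$ with $f$ built from alternating constant and slope-one blocks so that $f(n)/n$ oscillates on arbitrarily long intervals, and compute $\ell_R(R/I_n)$ via a short exact sequence plus Artin--Rees. The differences---your choice of exact sequence ($0\to I_n/m_R^n\to R/m_R^n\to R/I_n\to 0$ versus the paper's $0\to xR/(xR\cap I_n)\to R/I_n\to (R/xR)/I_n(R/xR)\to 0$) and your oscillation targets ($\alpha$ and $1$ versus the paper's $0$ and $\tfrac12$)---are cosmetic.
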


\begin{proof} Given any sequence of integers 
$$
1=s_1<s_2<\cdots<s_l<\cdots
$$
define $b_1=1$ and for any  positive integer $m$ with $s_i<m\le s_{i+1}$ define
$$
b_m=\left\{\begin{array}{ll}
b_{s_i}&\mbox{ if $i$ is odd}\\
b_{s_i}+(m-s_i)&\mbox{ if $i$ is even}
\end{array}
\right.
$$
We have that
\begin{equation}\label{eq30}
b_{m+1}\ge b_m
\end{equation}
for all $m$ and
\begin{equation}\label{eq31}
m+b_n\ge b_{m+n}
\end{equation}
for all $m,n$. Now we inductively choose the $s_i$ sufficiently large so that
\begin{equation}\label{eq32}
\frac{b_{s_{i+1}}}{s_{i+1}}=\frac{b_{s_i}}{s_{i+1}}<\frac{1}{i}\mbox{ if $i$ odd}
\end{equation}
and
\begin{equation}\label{eq33}
\frac{b_{s_{i+1}}}{s_{i+1}}=\frac{b_{s_i}+(s_{i+1}-s_i)}{s_{i+1}}>\frac{1}{2}\mbox{ if $i$ even}.
\end{equation}
Then
$$
\liminf_{i\rightarrow \infty}\frac{b_i}{i}=0\mbox{ but }\limsup_{i\rightarrow\infty}\frac{b_i}{i}\ge \frac{1}{2}.
$$
In particular,
\begin{equation}\label{eq34}
\lim_{n\rightarrow\infty}\frac{b_n}{n}
\end{equation}
does not exist, even when $n$ is constrained to lie in an arbitrary arithmetic sequence.

Let $N=N(R)$. Suppose that $\dim N=d$. Let $p$ be a minimal  prime of $N$ such that $\dim R/p=d$. Then $N_p\ne 0$, so $p_p\ne 0$ in $R_p$. $p$ is an associated prime of $N$,  so there exists $0\ne x\in R$ such that $\mbox{ann}(x)=p$. $x\in p$, since otherwise 
$0=pxR_p=p_p$ which is impossible. In particular, $x^2=0$.

Define $m_R$-primary ideals in $R$ by
$$
I_n=m_R^n+xm_R^{b_n}.
$$
$\{I_n\}$ is a graded family of $m_R$-primary ideals by (\ref{eq31})in $R$ since 
$$
I_mI_n=(m_R^{m+n},xm_R^{m+b_n}, xm_R^{n+b_m}),
$$
and is a filtration by (\ref{eq30}).
Let $\overline R=R/xR$. We have short exact sequences
\begin{equation}\label{eqnr2}
0\rightarrow xR/xR\cap I_n\rightarrow R/I_n\rightarrow \overline R/I_n\overline R\rightarrow 0.
\end{equation}
By Artin-Rees, there exists a number $k$ such that
$xR\cap m_R^n=m_R^{n-k}(xR\cap m_R^{n-k})$ for $n>k$. Thus $xR\cap m_R^n\subset xm_R^{b_n}$ for $n\gg 0$ and $xR\cap I_n=xm_R^{b_n}$ for $n\gg 0$.
We have that
$$
xR/xR\cap I_n\cong xR/xm_R^{b_n}\cong R/({\rm ann}(x)+m_R^{b_n})\cong R/p+m_R^{b_n},
$$
so that  $\ell_R(xR/xR\cap I_n)=P_{R/p}(b_n)$  for $n\gg0$, where $P_{R/p}(n)$ is the Hilbert-Samuel polynomial of $R/p$. Hence
\begin{equation}\label{eqnr3}
\lim_{n\rightarrow \infty}\frac{\ell_R(xR/xR\cap I_n)}{n^d}=\frac{e(m_{R/p})}{d!}\lim_{n\rightarrow\infty}\left(\frac{b_n}{n}\right)^d
\end{equation}
does not exist by (\ref{eq34}).
For $n\gg0$, 
$$
\ell_R(\overline R/I_n\overline R)=\ell_R(\overline R/m_{\overline R}^n)=P_{\overline R}(n)
$$
where $P_{\overline R}(n)$ is the Hilbert-Samuel polynomial of $\overline R$. Since $\dim \overline R\le d$, we have that
\begin{equation}\label{eqnr4}
\lim_{n\rightarrow \infty}\frac{\ell_R(\overline R/I_n\overline R)}{n^d}
\end{equation}
exists. Thus
$$
\lim_{n\rightarrow \infty}\frac{\ell_R(R/I_n)}{n^d}
$$
does not exist by (\ref{eqnr2}), (\ref{eqnr3}) and (\ref{eqnr4}).
\end{proof}

We obtain  the following variant of Theorem \ref{Theorem1}, using the fact that a filtration is a graded family of ideals.

\begin{Theorem}\label{Theorem10} Suppose that $R$ is a local ring of dimension $d>0$ and $N(\hat R)$ is the nilradical of the $m_R$-adic completion $\hat R$ of $R$. Then the limit
$$
\lim_{n\rightarrow \infty} \frac{\ell_R(R/I_n)}{n^d}
$$
exists for any filtration of $R$ by $m_R$-primary ideals $\{I_n\}$ if and only if $\dim N(\hat R)<d$
\end{Theorem}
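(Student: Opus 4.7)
The plan is to deduce Theorem \ref{Theorem10} from Theorems \ref{Theorem1} and \ref{TheoremN1} via a faithful-flatness transfer between $R$ and $\hat R$. For the sufficiency direction ($\Leftarrow$), if $\dim N(\hat R)<d$ then Theorem \ref{Theorem1} already gives the existence of $\lim_{n\to\infty}\ell_R(R/I_n)/n^d$ for every graded family of $m_R$-primary ideals, and every filtration is in particular a graded family. So this direction is immediate.

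For the necessity direction ($\Rightarrow$), I would argue by contrapositive. Assume $\dim N(\hat R)=d$; the goal is to exhibit a filtration $\{I_n\}$ of $R$ by $m_R$-primary ideals for which $\ell_R(R/I_n)/n^d$ has no limit. Applying the contrapositive of Theorem \ref{TheoremN1} to $\hat R$ (legitimate because $\dim N(\hat R)=d$ and $\hat R$ is a Noetherian local ring of dimension $d$) yields a filtration $\{\hat I_n\}$ of $\hat R$ by $m_{\hat R}$-primary ideals such that $\ell_{\hat R}(\hat R/\hat I_n)/n^d$ has no limit, even along the sequence $n\in\NN$ itself. I would then set $I_n := \hat I_n \cap R$. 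It is routine that $\{I_n\}$ is a filtration of $R$ (from $I_mI_n\subset \hat I_m\hat I_n\cap R\subset \hat I_{m+n}\cap R$ together with $I_{n+1}\subset I_n$) by $m_R$-primary ideals (since $\hat I_n\supset m_{\hat R}^{k_n}$ forces $I_n\supset m_R^{k_n}$).

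The crucial point is the identity $I_n\hat R = \hat I_n$: since $\hat R=R+m_{\hat R}^{k_n}$ and $m_{\hat R}^{k_n}\subset \hat I_n$, any $\hat y \in \hat I_n$ decomposes as $\hat y = r + z$ with $r\in R$ and $z\in m_{\hat R}^{k_n}\subset \hat I_n$, forcing $r \in \hat I_n\cap R = I_n$, so $\hat I_n = I_n + m_{\hat R}^{k_n} = I_n\hat R$. Faithful flatness of $\hat R$ over $R$ applied to the finite length module $R/I_n$ then gives $\ell_R(R/I_n) = \ell_{\hat R}(\hat R/I_n\hat R) = \ell_{\hat R}(\hat R/\hat I_n)$, and nonexistence of the limit transfers from the right to the left, completing the contrapositive. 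The main obstacle I anticipate is precisely this transfer step: without the equality $I_n\hat R=\hat I_n$ one would only have the inequality $\ell_R(R/I_n)\ge \ell_{\hat R}(\hat R/\hat I_n)$, which is too weak to inherit oscillatory behavior. The fact that $\hat R = R + m_{\hat R}^k$ for every $k$ is precisely what supplies the missing equality.
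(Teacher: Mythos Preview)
Your proof is correct and follows essentially the same approach as the paper. The paper's argument is terse --- it remarks only that Theorem~\ref{Theorem10} is a variant of Theorem~\ref{Theorem1} obtained ``using the fact that a filtration is a graded family of ideals'' --- but the intended content is exactly what you wrote out: the sufficiency direction is immediate from Theorem~\ref{Theorem1}, and the necessity direction comes from Theorem~\ref{TheoremN1} (which was proved just before and constructs a \emph{filtration}, not merely a graded family) applied to $\hat R$, followed by the pullback $I_n=\hat I_n\cap R$ and the length identity $\ell_R(R/I_n)=\ell_{\hat R}(\hat R/\hat I_n)$. Your explicit verification that $I_n\hat R=\hat I_n$ via $\hat R=R+m_{\hat R}^{k_n}$ is the right way to justify this transfer, which the paper leaves implicit (it invokes the same reduction $\ell_R(R/I_n)=\ell_{\hat R}(\hat R/I_n\hat R)$ at the start of Section~\ref{Section5} for the other direction).
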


There is a small distinction from the statement of Theorem \ref{Theorem1}. In the case when $d=\dim R=0$, 
and $\{I_n\}$ is a filtration of $R$ by $m_R$-primary ideals, there exists $n_0$ such that $I_n=I_{n+1}$ for all $n\ge n_0$,
since $I_{n+1}\subset I_n$ for all $n$ and $R$ has finite length. Thus
$$
\lim_{n\rightarrow \infty}\ell_R(R/I_n)
$$
always exists if $\{I_n\}$ is a filtration by $m_R$-primary ideals of a zero dimensional local ring. 
However, such limits do not always exist on a 0-dimensional local ring $R$  if $R$ is not reduced (Theorem 5.5 \ref{Theorem1}).

\begin{Theorem}\label{Theorem4} Suppose that $(R,m_R)$ is a  local ring of dimension $d>0$. Then there exists a graded family of $m_R$-primary ideals
$\{I_n\}$ in $R$ such that
\begin{equation}\label{eqF61}
\limsup_{n\rightarrow \infty}\frac{\ell_R(R/I_{m})-\ell_R(R/I_{m+1})}{m^{d-1}}=\infty
\end{equation}
and
\begin{equation}\label{eq62}
\limsup_{n\rightarrow\infty}\frac{e(I_m)-e(I_{m+1})}{m^{d-1}}=\infty.
\end{equation}
\end{Theorem}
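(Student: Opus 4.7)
The plan is to produce a single graded family $\{I_n\}$ of the form $I_n=m_R^{a_n}$, where $(a_n)_{n\ge 0}$ is a carefully chosen subadditive sequence of non-negative integers with $a_0=0$ and $a_n\ge 1$ for $n\ge 1$. Then $I_n$ is automatically $m_R$-primary, the condition $I_mI_n\subseteq I_{m+n}$ is equivalent to subadditivity $a_{m+n}\le a_m+a_n$, and by Hilbert--Samuel one has $\ell_R(R/m_R^a)=(e(m_R)/d!)\,a^d+O(a^{d-1})$ while $e(m_R^a)=e(m_R)\,a^d$. Both statements of the theorem therefore reduce to exhibiting a subadditive $(a_n)$ together with spike indices $m_k\to\infty$ at which $a_{m_k+1}$ is bounded and $a_{m_k}^d/m_k^{d-1}\to\infty$.

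To build such an $(a_n)$, I choose a super-sparse sequence of positive integers $n_1<n_2<\cdots$, concretely $n_1=2$ and $n_{k+1}=n_k^{d+1}$, and define
$$
\epsilon_n=\max\Bigl\{\sum_i n_{k_i}\ :\ (k_i)\text{ is a finite multiset of positive integers with }\sum_i(n_{k_i}+1)\le n\Bigr\},
$$
interpreting the empty sum as $0$, and set $a_n:=n-\epsilon_n$. Concatenating the optimal decompositions for $m$ and $n$ shows $\epsilon$ is superadditive, so $a$ is subadditive; and every nonempty decomposition satisfies $\sum_i n_{k_i}\le n-(\text{number of parts})\le n-1$, giving $a_n\ge 1$. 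Two estimates then handle the spikes. First, the single-part decomposition gives $\epsilon_{n_k+1}\ge n_k$ and every multi-part decomposition of $n_k+1$ contributes strictly less, so $a_{n_k+1}=1$. Second, any decomposition of $n_k$ uses only parts of size $\le n_{k-1}$, so writing $N$ for the number of parts and $S=\sum_i n_{k_i}$, the inequalities $S\le Nn_{k-1}$ and $S+N\le n_k$ force $\epsilon_{n_k}\le n_kn_{k-1}/(n_{k-1}+1)$ and therefore
$$
a_{n_k}\ \ge\ \frac{n_k}{n_{k-1}+1}\ \ge\ \frac{n_{k-1}^d}{2}.
$$

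Combining these, for $k$ large enough that Hilbert--Samuel gives $\ell_R(R/m_R^{a_{n_k}})\ge (e(m_R)/(2\cdot d!))\,a_{n_k}^d$, the normalized difference satisfies
$$
\frac{\ell_R(R/I_{n_k})-\ell_R(R/I_{n_k+1})}{n_k^{d-1}}\ \ge\ \frac{e(m_R)}{2\cdot d!}\cdot\frac{a_{n_k}^d}{n_k^{d-1}}-\frac{1}{n_k^{d-1}}\ \ge\ \frac{e(m_R)}{2^{d+1}\,d!}\,n_{k-1}-o(1),
$$
using $n_k^{d-1}=n_{k-1}^{d^2-1}$ and $a_{n_k}^d\ge n_{k-1}^{d^2}/2^d$, which tends to $\infty$ with $k$. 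The identical argument with $e(I_n)=a_n^d\,e(m_R)$ settles the multiplicity limsup, so the same family $\{I_n\}$ witnesses both statements simultaneously.

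The delicate step is the upper bound $\epsilon_{n_k}\le n_kn_{k-1}/(n_{k-1}+1)$, which records the knapsack-style fact that at stage $n_k$ every available part has size $\le n_{k-1}$ and thus contributes at ``rate'' at most $n_{k-1}/(n_{k-1}+1)$; the super-exponential growth $n_{k+1}=n_k^{d+1}$ is calibrated exactly so that after taking the $d$-th power and dividing by $n_k^{d-1}$ the spike contribution $a_{n_k}^d$ diverges.
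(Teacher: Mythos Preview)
Your proof is correct and shares the paper's high-level strategy: take $I_n=m_R^{a_n}$ for a subadditive integer sequence $(a_n)$, then use $\ell_R(R/m_R^{a})\sim (e(m_R)/d!)\,a^d$ and $e(m_R^{a})=e(m_R)\,a^d$ to reduce everything to the combinatorics of $(a_n)$. Where you genuinely differ is in the construction of that sequence. The paper sets $a_m=\lceil m\sigma(m)\rceil$ with $\sigma$ a decreasing step function valued in $[1,2]$, dropping by $2^{-n}$ at $m=2^{2^n}$; subadditivity follows from monotonicity of $\sigma$, all $a_m$ stay comparable to $m$, and the spike at $m=2^{2^n}-1$ is extracted by expanding $(m\sigma(m))^d-((m+1)\sigma(m+1))^d$ to isolate a term of order $m/2^n\sim m/\log_2 m$. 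Your knapsack construction instead makes $(a_n)$ oscillate between $1$ and values close to $n$: since $a_{n_k+1}=1$ you have $I_{n_k+1}=m_R$, so $\ell_R(R/I_{n_k})-\ell_R(R/I_{n_k+1})$ is essentially $\ell_R(R/m_R^{a_{n_k}})$ minus a constant, and no delicate difference of large polynomial values is needed. This buys you a cleaner endgame, at the price of a slightly less transparent subadditivity mechanism (superadditivity of $\epsilon_n$ by concatenation is immediate, but the packing bound $\epsilon_{n_k}\le n_k n_{k-1}/(n_{k-1}+1)$ asks the reader to track the knapsack carefully). Both arguments are complete; the paper's is more explicitly analytic, yours more combinatorial and arguably more direct once the properties of $(a_n)$ are verified.
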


\begin{proof} Inductively define a function $\sigma:\ZZ_+\rightarrow \ZZ_+$ by
$\sigma(1)=2$, 
$$
\begin{array}{l}
\mbox{$\sigma(m)=\sigma(m-1)$ if $m\ne 2^{2^n}$ for some $n\in \ZZ_+$ and}\\
\mbox{$\sigma(m)=\sigma(m-1)-\frac{1}{2^n}$ if $m=2^{2^n}$ for some $n\in \ZZ_+$.}
\end{array}
$$
We have that
$$
\sum_{n=1}^{\infty}\frac{1}{2^n}=1,
$$
so 
\begin{equation}\label{eq41}
1\le \sigma(i)\le 2
\end{equation}
 for all $i$.  We have that $\sigma(a)\ge \sigma(b)$ if $b>a$, so
$$
\begin{array}{lll}
\lceil m_1\sigma(m_1)\rceil+\lceil m_2\sigma(m_2)\rceil&\ge &m_1\sigma(m_1)+m_2\sigma(m_2)\\
&\ge& (m_1+m_2)\min\{\sigma(m_1),\sigma(m_2)\}\\
&\ge&(m_1+m_2)\sigma(m_1+m_2)
\end{array}
$$
so the integers
\begin{equation}\label{eq40}
\lceil m_1\sigma(m_1)\rceil+\lceil m_2\sigma(m_2)\rceil\ge \lceil (m_1+m_2)\sigma(m_1+m_2)\rceil.
\end{equation}
Let $I_0=R$ and
$$
I_m=m_R^{\lceil m\sigma(m)\rceil}
$$
for $m\ge 1$. $\{I_m\}$ is a graded family of $m_R$-primary ideals in $R$ by (\ref{eq40}).

Let 
$$
P_R(t)=\frac{e(R)}{d!}t^d+\mbox{ lower order terms in $t$}
$$ 
be the Hilbert polynomial of $R$. For $m\gg 0$, we have that
$$
\ell_R(R/I_m)=P_R(\lceil m\sigma(m)\rceil).
$$
Let
$$
F(m)=\frac{\ell_R(R/I_m)-\ell_R(R/I_{m+1})}{m^{d-1}}.
$$
Using the bound (\ref{eq41}), the bound
$$
m\sigma(m)\le \lceil m\sigma(m)\rceil\le m\sigma(m)+1
$$
for all $m$, and since $P_R(t)$ is a polynomial of degree $d$, there exists a positive constant $c$ such that
$$
F(m)\ge \frac{e(R)}{d!}\left(\frac{(m\sigma(m))^d-((m+1)\sigma(m+1)+a)^d}{m^{d-1}}\right)-c
$$
for $m\gg 0$. Expanding
$$
\begin{array}{lll}
((m+1)\sigma(m+1)+1)^d&=& (m\sigma(m+1)+(\sigma(m+1)+1))^d\\
&=&\sum_{i=1}^d\binom{d}{i}(m\sigma(m+1))^i(\sigma(m+1)+1)^{d-i},
\end{array}
$$
we see that there exists a positive integer $c'$ such that 
$$
F(m)\ge \frac{e(R)}{d!}m(\sigma(m)^d-\sigma(m+1)^d)-c'
$$
for $m\gg 0$. Let $m+1=2^{2^n}$. We have that
$$
\sigma(m+1)^d=(\sigma(m)-\frac{1}{2^n})^d
=\sum_{i=0}^d(-1)^i\left(\frac{1}{2^n}\right)^i\sigma(m)^{d-i},
$$
so that
$$
m(\sigma(m)^d-\sigma(m+1)^d)
=\frac{m}{2^n}\left(d!\sigma(m)^{d-1}-\sum_{i=2}^d(-1)^i\left(\frac{1}{2^n}\right)^i\sigma(m)^{d-i}\right).
$$
Thus there exists a positive constant $\lambda$ such that 
$$
F(m)\ge \lambda \frac{m}{2^n}-c'
$$
for $m=2^{2^n}-1\gg 0$. Writing
$$
\frac{m}{2^n}=\frac{2^{2^n}-1}{2^n},
$$
we see that $F(m)$ goes to infinity for large $m$ like $\frac{m}{{\rm log}_2m}$, giving us the formula (\ref{eqF61}).
We have that
$$
e(I_m)=e(R)(\lceil m\sigma(m)\rceil)^d
$$
for all $m\in \ZZ_+$, so the above calculation also gives us the formula (\ref{eq62}).

\end{proof}

\begin{Lemma}\label{Lemma1}
Suppose that $S=k[x_1,\ldots,x_n]$ is a polynomial ring over a field $k$ and $I\subset S$ is an ideal which is generated by monomials. Let $N=(x_1,\ldots,x_n)$, an ideal in $S$. Suppose that $r,s\in \ZZ_+$ and $N^s\subset I$. Then
$$
\dim_k(I/N^rI)\le (s+r)^{d-1}r.
$$
\end{Lemma}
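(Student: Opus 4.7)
The plan is to exploit the monomial structure: since both $I$ and $N^rI$ are monomial ideals, $I/N^rI$ has a $k$-basis consisting of the monomials lying in $I\setminus N^rI$, so it suffices to count these. I will first bound the total degree of such a monomial, and then slice by projecting onto the first $n-1$ coordinates and uniformly bound each fiber. (I read the $d$ appearing in the statement as $n=\dim S$, which the context seems to require.)

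For the degree bound, suppose $x^a\in I$ has $|a|\geq s+r$. Choose any $b\leq a$ coordinatewise with $|b|=s$; then $x^b\in N^s\subset I$ and $x^{a-b}\in N^r$, so $x^a=x^b\cdot x^{a-b}\in N^rI$. Hence every monomial in $I\setminus N^rI$ has total degree at most $s+r-1$.

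For the fiber bound, fix $\bar a=(a_1,\ldots,a_{n-1})\in\NN^{n-1}$ and let $T_{\bar a}=\{a_n\in\NN : x^{(\bar a,a_n)}\in I\setminus N^rI\}$. Both conditions $x^{(\bar a,a_n)}\in I$ and $x^{(\bar a,a_n)}\in N^rI$ are preserved under multiplication by $x_n$, so there exist thresholds $\alpha_1\leq\alpha_2$ in $\NN\cup\{\infty\}$ with $T_{\bar a}=[\alpha_1,\alpha_2)$. If $\alpha_1<\infty$, then $x^{(\bar a,\alpha_1)}\in I$ forces $x^{(\bar a,\alpha_1+r)}=x_n^r\cdot x^{(\bar a,\alpha_1)}\in N^rI$, so $\alpha_2\leq\alpha_1+r$ and $|T_{\bar a}|\leq r$.

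Finally, $T_{\bar a}\neq\emptyset$ forces $|\bar a|\leq s+r-1$, so the number of contributing $\bar a$ is at most the number of lattice points in the simplex $\{|\bar a|\leq s+r-1\}\subset\NN^{n-1}$, namely $\binom{s+r+n-2}{n-1}$. A routine factorwise comparison $(s+r+j)\leq (j+1)(s+r)$ for $j\geq 0$ and $s+r\geq 1$ yields $\binom{s+r+n-2}{n-1}\leq (s+r)^{n-1}$. Summing the fiber bound gives
$$
\dim_k(I/N^rI)=\sum_{\bar a}|T_{\bar a}|\leq (s+r)^{n-1}r,
$$
as required. No step poses a real obstacle; the crux is the one-dimensional threshold inequality $\alpha_2-\alpha_1\leq r$, which is what collapses the count on $\NN^n$ to a count on $\NN^{n-1}$ with a uniform multiplicative factor of $r$.
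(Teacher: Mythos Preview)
Your proof is correct and follows essentially the same strategy as the paper's: both count monomials in $I\setminus N^rI$ by projecting onto the first $n-1$ coordinates, bounding each fiber along the $x_n$-direction by $r$ (your threshold inequality $\alpha_2\le\alpha_1+r$ is exactly the paper's inclusion $(0,\ldots,0,r)+\mathrm{NP}(I)\subset\mathrm{NP}(N^rI)$), and bounding the base by the lattice points in a simplex. The paper phrases everything in Newton-polytope language, but the content is identical; your total-degree bound $|a|\le s+r-1$ is in fact slightly sharper than the simplex $\sum y_i\le s+r$ the paper writes down, which is what makes your binomial estimate $\binom{s+r+n-2}{n-1}\le(s+r)^{n-1}$ go through cleanly.
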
 

\begin{proof} Let 
$$
(\RR^d)_+=\{(a_1,\ldots,a_d)\in \RR^d\mid a_i\ge 0\mbox{ for all }i\}.
$$
Given an ideal $J$ in $S$ which is generated by monomials, let
$$
\mbox{NP}(J)=\cup \left((a_1,\ldots,a_d)+(\RR^d)_+\right)
$$
where the union in $\RR^d$ is over all $(a_1,\ldots,a_d)$ such that $x_1^{a_1}x_2^{a_2}\cdots x_d^{a_d}\in J$.
Let $\pi:\RR^d\rightarrow \RR^{d-1}$ be projection onto the first $d-1$ factors. Let $T$ be the simplex
$$
T=\{(y_1,\ldots,y_{d-1})\in \RR^{d-1}\mid y_1,\ldots,y_{d-1}\ge 0\mbox{ and }y_1+\cdots+y_{d-1}\le s+r\}
$$
in $\RR^{d-1}$. Let
$$
U=\mbox{NP}(I)\setminus \mbox{NP}(N^rI)\subset \RR^d.
$$
We have that
$$
\dim_k(I/N^rI)=\#(U\cap \ZZ^d).
$$
$N^s\subset I$ implies $N^{s+r}\subset N^rI$, so
$$
\mbox{NP}(N^{s+r})\subset\mbox{NP}(N^rI)\subset \mbox{NP}(I).
$$
Thus for $w\in \RR^{d-1}$, $U\cap \pi^{-1}(w)=\emptyset$ if $w\not\in T$. 
$$
(0,0,\ldots,0,r)+\mbox{NP}(I)\subset\mbox{NP}(N^rI)
$$
so $\#(\pi^{-1}(w)\cap U\cap \ZZ^d)\le r$ if $w\in T\cap \ZZ^{d-1}$. Thus
$$
\#(U\cap \ZZ^d)\le \#(T)r\le (s+r)^{d-1}r.
$$
\end{proof}

\begin{Theorem}\label{Theorem6} Suppose that $(R,m_R)$ is a regular local ring of dimension $d>0$ and $\{I_n\}$ is a filtration of $R$ by
$m_R$-primary ideals. Then there exists a constant $\gamma>0$ such that 
$$
0\le \ell_R(I_n/I_{n+1})<\gamma n^{d-1}
$$
for all $n$.
\end{Theorem}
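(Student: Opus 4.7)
The plan is to transfer the problem to one about monomial ideals in a polynomial ring, where Lemma \ref{Lemma1} can be applied. Since $R$ is regular of dimension $d$, the associated graded ring $S := \mathrm{gr}_{m_R}(R)$ is a polynomial ring $k[x_1,\ldots,x_d]$ over $k = R/m_R$, with irrelevant maximal ideal $N = (x_1,\ldots,x_d)$. For each $m_R$-primary ideal $I\subset R$, I would form the initial ideal
$$
\mathrm{in}(I) = \bigoplus_{k\ge 0}\frac{I\cap m_R^k + m_R^{k+1}}{m_R^{k+1}} \subset S,
$$
which is $N$-primary and satisfies $\dim_k S/\mathrm{in}(I) = \ell_R(R/I)$. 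Writing $J_n := \mathrm{in}(I_n)$, the filtration conditions transfer: $J_{n+1}\subset J_n$, and $J_1 J_n \subset J_{n+1}$, the latter from $I_1 I_n \subset I_{n+1}$ together with the general fact $\mathrm{in}(A)\cdot\mathrm{in}(B)\subset \mathrm{in}(AB)$.

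The $J_n$ need not be monomial, so next I would take a second initial ideal. Fix any monomial order $<$ on $S$ and set $\tilde J_n := \mathrm{in}_<(J_n)$; these are monomial ideals with the same Hilbert functions as the $J_n$, so $\dim_k S/\tilde J_n = \ell_R(R/I_n)$, and standard Gr\"obner basis facts preserve the containments $\tilde J_{n+1}\subset \tilde J_n$ and $\tilde J_1\tilde J_n \subset \tilde J_{n+1}$. Since $I_1$ is $m_R$-primary, choose $c\ge 1$ with $m_R^c \subset I_1$; then $N^c = \mathrm{in}(m_R^c) \subset J_1 \subset \tilde J_1$, and passing to powers yields $N^{cn}\subset \tilde J_n$ for every $n$.

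The left inequality $\ell_R(I_n/I_{n+1})\ge 0$ is immediate from $I_{n+1}\subset I_n$. For the upper bound, equality of Hilbert functions gives
$$
\ell_R(I_n/I_{n+1}) = \dim_k S/\tilde J_{n+1} - \dim_k S/\tilde J_n = \dim_k \tilde J_n/\tilde J_{n+1},
$$
and the chain $N^c \tilde J_n\subset \tilde J_1\tilde J_n \subset \tilde J_{n+1}$ gives $\dim_k \tilde J_n/\tilde J_{n+1} \le \dim_k \tilde J_n/N^c \tilde J_n$. Applying Lemma \ref{Lemma1} to the monomial ideal $\tilde J_n$ with $s=cn$ and $r=c$ (legitimate since $N^{cn}\subset \tilde J_n$) yields
$$
\dim_k \tilde J_n/N^c\tilde J_n \le (cn+c)^{d-1}\, c = c^d(n+1)^{d-1},
$$
so any $\gamma$ with $\gamma > c^d\cdot 2^{d-1}$ works for $n\ge 1$.

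The main obstacle is precisely that no $J_n$ is guaranteed to be monomial, so Lemma \ref{Lemma1} is not directly available. The double-initial-ideal trick resolves this because passage to $\mathrm{in}_<(J_n)$ simultaneously preserves the three features needed to invoke the lemma: the Hilbert function (so the length computation is faithful), the descending containment $\tilde J_{n+1}\subset \tilde J_n$ (so the difference of lengths equals $\dim_k \tilde J_n/\tilde J_{n+1}$), and the submultiplicative estimate $\tilde J_1\tilde J_n \subset \tilde J_{n+1}$ (which, combined with $N^c\subset \tilde J_1$, supplies the hypothesis $N^c\tilde J_n\subset\tilde J_{n+1}$ of Lemma \ref{Lemma1}).
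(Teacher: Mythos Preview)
Your argument is correct and follows essentially the same route as the paper: pass to the associated graded polynomial ring, take a further monomial initial ideal, and apply Lemma~\ref{Lemma1} with $r=c$ and $s=cn$ to obtain the bound $c^d(n+1)^{d-1}$. One small slip: the chain ``$N^c \subset J_1 \subset \tilde J_1$'' is not valid as written (an ideal need not be contained in its initial ideal), but since $N^c$ is already a monomial ideal you have $N^c = \mathrm{in}_<(N^c) \subset \mathrm{in}_<(J_1) = \tilde J_1$, which is what you need.
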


\begin{proof} There exists a positive integer $c$ such that $m_R^c\subset I_1$, so that $m_R^{cn}\subset I_n$ for all $n$.
We further have that $I_1I_n\subset I_{n+1}$, so $m_R^cI_n\subset I_{n+1}$, and thus
$$
\ell_R(I_n/I_{n+1})\le \ell_R(I_n/m_R^cI_n).
$$
Let $k=R/m_R$. Since $R$ is regular,
$$
S={\rm gr}_{m_R}(R)=\bigoplus_{n\ge 0}m_R^n/m_R^{n+1}=k[x_1,\ldots,x_d]
$$
is a standard graded polynomial ring, where $x_1,\ldots,x_d$ are the initial forms of a regular system of parameters in $R$.
Let $M=(x_1,\ldots,x_d)$. Let $J={\rm in}(I_n)$ be the initial ideal of $I_n$. The initial ideal of $m_R^l$ is ${\rm in}(m_R^l)=M^n$ for all $n$. We have that
$$
M^{cn}\subset J
$$ 
and
$$
M^cJ={\rm in}(m_R^c){\rm in}(I_n)\subset{\rm in}(m_R^cI_n)\subset J.
$$
Thus
$$
\ell_R(I_n/I_{n+1})\le \dim_k(S/M^cJ)-\dim_k(S/J).
$$
On the polynomial ring $S=k[x_1,\ldots,x_d]$, we can refine the grading by the degree lex order deglex. Let
$$
A={\rm gr}_{deglex}(S)\cong k[y_1,\ldots,y_d]
$$
be the associated multi-graded ring. Let $N$ be the graded maximal ideal $N=(y_1,\ldots,y_d)$.
Let $K={\rm in}_{deglex}(J)$ be the associated initial ideal in $A$. ${\rm in}_{deglex}(M^l)=N^l$ for all $l$. We have that
$$
N^cK={\rm in}_{deglex}(M^c){\rm in}_{deglex}(J)\subset {\rm in}_{deglex}(M^cJ)\subset K.
$$
We have that
$$
\dim_k(S/M^cJ)-\dim_k(S/J)\le \dim_k(A/N^cK)-\dim_k(A/K).
$$
Now take $r=c$ and $s=cn$ in Lemma \ref{Lemma1} to get
$$
\ell_R(I_n/I_{n+1})\le c^d(n+1)^{d-1}.
$$
\end{proof}

\begin{Theorem}\label{Theorem7} Suppose that $(R,m_R)$ is a  local ring of dimension $d>0$. Then there exists a filtration $\{I_n\}$ of $m_R$-primary ideals in $R$ such that the limit
\begin{enumerate}
\item[1)]
$$
v=\lim_{n\rightarrow\infty}\frac{\ell_R(R/I_n)}{n^d}
$$
exists, and is equal to $\frac{e(R)}{d!}$.
\item[2)] The function $\frac{\ell_R(R/I_n)-\lceil vn^d\rceil}{n^{d-1}}$ goes to infinity of order $\log_2(n)$.
\item[3)] The function $\frac{\ell_R(R/I_{n+1})-\ell_R(R/I_n)}{n^{d-1}}$ is  bounded, but the limit
$$
\lim_{n\rightarrow\infty}\frac{\ell_R(R/I_{n+1})-\ell_R(R/I_n)}{n^{d-1}}
$$
does not exist, even when $n$ is constrained to lie in any  arithmetic sequence $\{am+b\}_{m\in\NN}$.
\item[4)] The limit
$$
\lim_{n\rightarrow \infty}\frac{e(I_{n+1})-e(I_n)}{n^{d-1}}
$$
does not exist, even when $n$ is constrained to lie in any  arithmetic sequence $\{am+b\}_{m\in\NN}$.
\end{enumerate} 
\end{Theorem}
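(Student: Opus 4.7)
My plan is to construct a filtration of the form $I_n = m_R^{\alpha_n}$ with $\alpha_n = n + \tilde h(n)$, where $\tilde h \colon \NN \to \NN$ is a subadditive, non-decreasing function satisfying $\tilde h(n) \sim \log_2 n$ and whose set of jumps meets every arithmetic progression in infinitely many places.

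To build $\tilde h$, enumerate all pairs $(q,r)$ with $q \ge 2$ and $0 \le r < q$ as $(q_1,r_1),(q_2,r_2),\ldots$, and fix a surjection $j \colon \ZZ_+ \to \ZZ_+$ each of whose fibers is infinite (for instance the diagonal enumeration $1,1,2,1,2,3,\ldots$). For each $k \ge 1$ pick $n_k \in [2^{k-1}, 2^k)$ with $n_k \equiv r_{j(k)} \pmod{q_{j(k)}}$; this is possible as soon as $2^{k-1} \ge q_{j(k)}$, since the dyadic interval then contains a representative of every residue class modulo $q_{j(k)}$. Put $J = \{n_k : k \ge 1\}$ and $h(n) = \#(J \cap [1,n])$; because $J$ contains exactly one element of each dyadic interval, $h(n) = \lfloor \log_2 n\rfloor + O(1)$. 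Bounding $\#(J \cap (m,m+n])$ above by the number of dyadic intervals meeting $(m,m+n]$, namely $\lfloor \log_2(1+n/m)\rfloor + O(1)$, and $\#(J\cap[1,n])$ below by $\lfloor \log_2 n\rfloor$, one obtains a universal constant $C$ with $h(m+n) - h(m) - h(n) \le C$ for all $m,n \ge 1$. Then $\tilde h := h + C$ is subadditive and non-decreasing with $\tilde h(n+1) - \tilde h(n) \in \{0,1\}$. Setting $\alpha_n = n + \tilde h(n)$ and $I_n = m_R^{\alpha_n}$, subadditivity of $\alpha_n$ forces $I_m I_n \subset I_{m+n}$ and monotonicity forces $I_{n+1} \subset I_n$, so $\{I_n\}$ is a filtration of $m_R$-primary ideals.

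For $n \gg 0$, $\ell_R(R/I_n) = P_R(\alpha_n)$, where $P_R(t) = \frac{e(R)}{d!}t^d + c_{d-1}t^{d-1} + \cdots$ is the Hilbert-Samuel polynomial of $R$. Since $\tilde h(n)/n \to 0$, statement (1) follows with $v = e(R)/d!$, and expanding $(n+\tilde h(n))^d$ by the binomial theorem yields
$$
\ell_R(R/I_n) - \lceil v n^d\rceil = \frac{e(R)}{(d-1)!}\, n^{d-1}\tilde h(n) + O(n^{d-1}),
$$
which after dividing by $n^{d-1}$ grows of order $\log_2 n$, proving (2). Because $\alpha_{n+1} - \alpha_n \in \{1,2\}$, equalling $2$ exactly when $n+1 \in J$, a Taylor expansion together with the identity $e(I_n) = e(R)\alpha_n^d$ gives
$$
\frac{\ell_R(R/I_{n+1}) - \ell_R(R/I_n)}{n^{d-1}} \to (\alpha_{n+1}-\alpha_n)\cdot\frac{e(R)}{(d-1)!},\qquad \frac{e(I_{n+1}) - e(I_n)}{n^{d-1}} \to (\alpha_{n+1}-\alpha_n)\cdot d\,e(R)
$$
along any subsequence on which $\alpha_{n+1}-\alpha_n$ is constant. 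Fix any arithmetic progression $\{qm+r\}$. By the construction of $J$ applied to the pair $(q, (r+1) \bmod q)$, there are infinitely many $m$ with $qm+r+1 \in J$; since $J$ has density zero in $\NN$, there are also infinitely many $m$ with $qm+r+1 \notin J$. Both values $1$ and $2$ of $\alpha_{n+1}-\alpha_n$ therefore occur infinitely often along the progression, and the two corresponding subsequential limits are distinct and nonzero, so no overall limit exists. This proves (3) and (4), and boundedness in (3) follows from $\alpha_{n+1}-\alpha_n \le 2$.

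The main obstacle is the combinatorial lemma that the diagonally-constructed $J$ retains an almost-subadditive counting function, with the additive defect bounded by a universal constant. The balance between sparseness (to control $\#(J \cap (m,m+n])$) and flexibility (to satisfy all residue-class constraints simultaneously) is delivered by placing exactly one representative in each dyadic interval $[2^{k-1}, 2^k)$; once the bound $h(m+n) \le h(m)+h(n)+C$ is in hand, the remaining verifications reduce to routine Hilbert-Samuel polynomial manipulations.
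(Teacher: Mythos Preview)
Your construction follows the same template as the paper's: take $I_n=m_R^{b_n}$ with $b_n=n+a_n$ for a non-decreasing subadditive sequence $a_n$ of order $\log_2 n$, and read off (1)--(4) from the Hilbert--Samuel polynomial $P_R$. The paper uses the simplest such choice, $a_n=\lfloor\log_2 n\rfloor$ for $n\ge 2$, whose increments $a_{n+1}-a_n$ equal $1$ exactly at $n\in\{2^\sigma-1:\sigma\ge 1\}$, and otherwise the Hilbert polynomial manipulations are essentially identical to yours.

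Your replacement of this jump set by an engineered $J$ with one element in each dyadic block, chosen so that $J$ meets every residue class modulo every $q\ge 2$ infinitely often, is a genuine strengthening rather than a cosmetic variation. The paper's jump set $\{2^\sigma-1\}$ consists entirely of odd numbers, so along the progression $\{2m\}$ one has $a_{n+1}-a_n\equiv 0$ and the limit in (3) actually \emph{exists}; the ``even along any arithmetic sequence'' clauses of (3) and (4) are therefore not established by the paper's choice of $a_n$. Your construction supplies exactly the missing ingredient. The cost is the almost-subadditivity lemma $h(m+n)\le h(m)+h(n)+C$, but your argument via counting dyadic blocks meeting $(m,m+n]$ is correct (indeed $\log_2\frac{m+n}{mn}\le 1$ gives a uniform $C$), and shifting by $C$ to obtain a genuinely subadditive $\tilde h$ is legitimate. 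Two minor points: for the finitely many $k$ with $2^{k-1}<q_{j(k)}$ you should say explicitly that $n_k$ is chosen arbitrarily in $[2^{k-1},2^k)$; and the case $a=1$ (where the progression is cofinite in $\NN$) is covered by your density-zero remark together with $J$ being infinite.
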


\begin{proof} Define a sequence $\{a_i\}_{i\in\NN}$ of positive integers by
$a_0=0$, $a_1=1$ and 
$a_i=\sigma$ if $i=2^{\sigma}+r$ with $0\le r<2^{\sigma}$ for $i\ge 2$.
We then have that
\begin{equation}\label{eq50}
a_{m+1}\ge a_m
\end{equation}
for all $m\in\NN$ and
\begin{equation}\label{eq51}
a_m+a_n\ge a_{m+n}
\end{equation}
for all $m,n\in\NN$. We give a verification of (\ref{eq51}). First assume that $n\ge m\ge 2$. Then
$m\le 2^{a_m+1}-1$ and $n\le 2^{a_n+1}-1$ so $m+n\le 2^{a_n+2}-2$. Thus
$$
a_{m+n}\le a_n+1\le a_n+a_m.
$$
Now suppose that $n\ge 2$ and $m=1$. Then $m+n\le 1+(2^{a_n+1}-1)$ so $a_{m+n}\le a_n+1\le a_n+a_m$.
The final case is when $m-n=1$. Then $m+n=2$ so $a_{m+n}=1\le a_{m}+a_n$.
\vskip .2truein
Set $b_n=n+a_n$ for $n\in \NN$. From (\ref{eq50}) and (\ref{eq51}) we conclude that
\begin{equation}\label{eq52}
b_{m+1}\ge b_m
\end{equation}
and
\begin{equation}\label{eq53}
b_m+b_n\ge b_{m+n}.
\end{equation}
For $n\ge 2$, we have that
$a_n\le \log_2n\le a_n+1$, so
\begin{equation}\label{eq54}
(\log_2n-1)\le a_n\le \log_2n.
\end{equation}
For $n\in\NN$, define $I_n=m_R^{b_n}$. $\{I_n\}_{n\in\NN}$ is a filtration of $m_R$-primary ideals in $R$
by (\ref{eq52}) and (\ref{eq53}). Let 
\begin{equation}\label{eq73}
P_R(t)=\frac{e(R)}{d!}t^d+\gamma t^{d-1}+\mbox{ lower order terms in $t$}
\end{equation}
 be the Hilbert polynomial of $R$, so that
$$
\ell_R(R/m_R^n)=P_R(n)
$$
for $n\gg 0$.

For $e,f\in\ZZ_+$, we have that
$$
\frac{b_n^e}{n^f}=\left(\frac{n+a_n}{n^{\frac{f}{e}}}\right)^e
=\left(n^{\frac{e-f}{e}}+a_nn^{-\frac{f}{e}}\right)^e.
$$
Thus 
\begin{equation}\label{eq71}
\lim_{n\rightarrow\infty} \frac{b_n^e}{n^f}=0
\end{equation}
if $f>e$ and 
\begin{equation}\label{eq72}
\lim_{n\rightarrow\infty} \frac{b_n^e}{n^e}=1.
\end{equation}
Thus the volume
$$
v=\lim_{n\rightarrow \infty}\frac{\ell_R(R/I_n)}{n^d}=\frac{e(R)}{d!}
$$
exists as a limit.

By (\ref{eq71}) and (\ref{eq72}), and using the notation of (\ref{eq73}),
\begin{equation}\label{eq55}
\begin{array}{lll}
\frac{\ell_R(R/I_n)-\lceil n^d\frac{e(R)}{d!}\rceil}{n^{d-1}}
&=&\frac{e(R)}{d!}\left(\frac{(n+a_n)^d-n^d}{n^{d-1}}\right)+\gamma\frac{b_n^{d-1}}{n^{d-1}}\\
&&+
\mbox{ terms whose absolute values become arbitrarily}\\
&&\mbox{ small for $n\gg 0$.} 
\end{array}
\end{equation}

We expand
$$
\begin{array}{lll}
\frac{(n+a_n)^d-n^d}{n^{d-1}}&=& \frac{\sum_{i=0}^{d-1}\binom{d}{i}n^ia_n^{d-i}}{n^{d-1}}\\
&=&\sum_{i=0}^{d-1}\binom{d}{i}n^{i+1-d}a_n^{d-i}.
\end{array}
$$
We see from (\ref{eq54}) that $n^{i+1-d}a_n^{d-i}\rightarrow 0$ as $n\rightarrow\infty$ if $i<d-1$, so the only significant term
in (\ref{eq55}) is $e(R)a_n$ which satisfies the bound (\ref{eq54}), so that we have verified 2). 

We now verify 3).
$$
\begin{array}{lll}
\frac{\ell_R(R/I_{n+1})-\ell_R(R/I_n)}{n^{d-1}}&=&
\frac{e(R)}{d!}\left(\frac{(n+1+a_{n+1})^d-(n+a_n)^d}{n^{d-1}}\right)
+\gamma\frac{b_{n+1}^{d-1}-b_n^{d-1}}{n^{d-1}}\\
&&+\mbox{ terms which go to zero with large $n$.}
\end{array}
$$
Now
$$
\frac{b_{n+1}^{d-1}-b_n^{d-1}}{n^{d-1}}=\sum_{i=1}^{d-1}\frac{(1+a_{n+1})^i-a_n^i}{n^i},
$$
so
$$
\begin{array}{lll}
\frac{\ell_R(R/I_{n+1})-\ell_R(R/I_n)}{n^{d-1}}&=&
\frac{e(R)}{d!}\left(\frac{\sum_{i=0}^d\binom{d}{i}n^i(1+a_{n+1})^{d-i}-\sum_{i=0}^d\binom{d}{i}n^ia_n^{d-i}}{n^{d-1}}\right)\\
&&+\mbox{ terms which go to zero with large $n$}\\
&=&\frac{e(R)}{(d-1)!}(1+a_{n+1}-a_n)+\mbox{ terms which go to zero with large $n$.}
\end{array}
$$
We have that
$$
a_{n+1}-a_n=\left\{\begin{array}{ll}
1&\mbox{ if $n=2^{\sigma}-1$ for some integer $\sigma$}\\ 0&\mbox{ otherwise}
\end{array}\right.
$$
Thus 3) follows.

Since
$$
e(I_n)=\frac{e(R)}{d!}b_n^d,
$$
the same calculation verifies 4).

\end{proof}

\section{The proof of Theorem \ref{Theorem1}}\label{Section5}

 In this section we give an outline of the proof of Theorem \ref{Theorem1}. We refer to the papers \cite{C4}, \cite{C5} and \cite{C6} for  details.

\subsection{Proof that $\bf \dim N(\hat R)<d$ implies limits exist}

Suppose that $R$ is a $d$-dimensional  local ring with $\dim N(R)<d$ and $\{I_n\}$ is a graded family of $m_R$-primary ideals in $R$.

We have that $\ell_{\hat R}(R/I_n\hat R)=\ell_R(R/I_n)$ for all $n$ so we may assume that $R=\hat R$ is complete; in particular, we may assume that $R$ is excellent with $\dim N(R)<d$. There exists a positive integer $c$ such that $m_R^c\subset I_1$, which implies that 
\begin{equation}\label{eq13}
m_R^{nc}\subset I_n
\mbox{ for all positive $n$.}
\end{equation}

Let $N=N(R)$ and $A=R/N$. We have short exact sequences
$$
0\rightarrow N/N\cap I_iR\rightarrow R/I_iR\rightarrow A/I_iA\rightarrow 0,
$$
from which we deduce that there exists a constant $\alpha>0$ such that
$$
\ell_R(N/N\cap I_iR)\le \ell_R(N/m_R^{ci}N)\le \alpha i^{\dim N}\le \alpha i^{d-1}.
$$
Replacing $R$ with $A$ and $I_n$ with $I_nA$, we thus reduce to the case that $R$ is reduced. Using the following lemma, we then reduce to the case that $R$ is a complete domain
(so that it is analytically irreducible).

\begin{Lemma}\label{LemmaM30}(Lemma 5.1  \cite{C4}) Suppose that $R$ is a $d$-dimensional reduced local domain, and $\{I_n\}$ is a graded family of $m_R$-primary ideals in $R$. Let $\{P_1,\ldots,P_s\}$ be the set of minimal primes of $R$ and let  $R_i=R/P_i$. Then there exists $\alpha>0$ such that 
$$
|(\sum_{i=1}^s\ell_{R_i}(R_i/I_nR_i))-\ell_R(R/I_n)|\le\alpha n^{d-1}
$$
for all $n$.
\end{Lemma}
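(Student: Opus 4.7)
The plan is to compare $\ell_R(R/I_n)$ with $\ell_R(\bar R/I_n\bar R)$ where $\bar R := \bigoplus_{i=1}^s R_i$. Since $R$ is reduced, $\bigcap_i P_i = 0$ and the natural map $R \to \bar R$ is injective; let $C$ be its cokernel. I claim $\dim C \le d-1$: localizing the short exact sequence $0\to R\to \bar R\to C\to 0$ at any minimal prime $P_j$ gives $R_{P_j} \cong \bar R_{P_j}$ (because for $i\neq j$, $(R/P_i)_{P_j} = 0$ by incomparability of minimal primes, while $R_{P_j}/P_j R_{P_j} = R_{P_j}$ since $R$ is reduced), so $C_{P_j} = 0$, whence the conductor $\mathfrak c := \mathrm{ann}_R(C)$ is contained in no minimal prime and $\dim R/\mathfrak c \le d-1$. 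Applying $-\otimes_R R/I_n$ to the short exact sequence will then produce
$$
0 \to K_n \to R/I_n \to \bar R/I_n\bar R \to C/I_n C \to 0
$$
with $K_n = (R\cap I_n\bar R)/I_n$. Since $R$ and each $R_i$ share the residue field $R/m_R$, one has $\ell_R(\bar R/I_n\bar R) = \sum_i \ell_{R_i}(R_i/I_nR_i)$, and the additive length relation reduces the lemma to showing $\ell_R(K_n) + \ell_R(C/I_nC)\le \alpha n^{d-1}$.

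The cokernel term is easy: by (\ref{eq13}) there is a fixed $c$ with $m_R^{cn}\subset I_n$, so
$$
\ell_R(C/I_nC) \le \ell_R(C/m_R^{cn}C),
$$
and the latter, for $n\gg 0$, is the Hilbert-Samuel function at $cn$ of the finitely generated $R$-module $C$, a polynomial of degree $\dim C \le d-1$ in $n$.

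The main obstacle is the kernel $K_n$, since there is no immediate reason it should be supported in low dimension. The idea is to exploit the conductor twice. First, if $f\in\mathfrak c$ then $f\bar R\subset R$, and for any $x\in R\cap I_n\bar R$ we get $fx \in f\cdot I_n\bar R = I_n\cdot f\bar R \subset I_n$, so $\mathfrak c\cdot K_n = 0$; second, prime avoidance applied to $\mathfrak c\not\subset P_i$ for every $i$ yields a single $f\in\mathfrak c$ lying outside every minimal prime, hence a nonzerodivisor on the reduced ring $R$, and by Krull's principal ideal theorem $\dim R/fR \le d-1$. Then $K_n\hookrightarrow (I_n :_R f)/I_n$, and the exact sequence
$$
0 \to (I_n :_R f)/I_n \to R/I_n \xrightarrow{\cdot f} R/I_n \to R/(I_n+fR) \to 0
$$
(exact by construction of the maps on $R/I_n$) gives $\ell_R((I_n:_R f)/I_n) = \ell_R(R/(I_n+fR))$. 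Applying $m_R^{cn}\subset I_n$ once more, the latter is bounded by the Hilbert-Samuel function of the local ring $R/fR$ evaluated at $cn$, a polynomial of degree $\le d-1$ in $n$. Assembling these estimates gives the desired $\alpha n^{d-1}$ bound; the delicate step is the conductor plus prime-avoidance maneuver, which recasts the a priori unstructured module $K_n$ as a submodule of a colon ideal cut out by a single equation in a ring of strictly smaller dimension.
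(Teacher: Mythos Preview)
The paper does not prove this lemma in situ; it is quoted from \cite{C4} with no argument given, so there is no in-paper proof to compare against. Your argument is correct and is the natural one: embed $R$ into $\bar R=\bigoplus_i R_i$, bound the cokernel term $C/I_nC$ via $\dim C\le d-1$ and the inclusion $m_R^{cn}\subset I_n$, and bound the kernel $K_n=(R\cap I_n\bar R)/I_n$ by noting that the conductor $\mathfrak c=\mathrm{ann}_R(C)$ kills it, then using prime avoidance to pick a single nonzerodivisor $f\in\mathfrak c$ so that $K_n\hookrightarrow (I_n:f)/I_n$ and $\ell_R((I_n:f)/I_n)=\ell_R(R/(I_n+fR))$ is controlled by the Hilbert--Samuel function of the $(d{-}1)$-dimensional ring $R/fR$. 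Each step checks out.

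Two small remarks. First, the printed hypothesis ``reduced local \emph{domain}'' is evidently a slip for ``reduced local \emph{ring}'' (for a domain $s=1$, $P_1=(0)$, and the statement is trivial); you tacitly read it that way, which is correct. Second, the Hilbert--Samuel bounds you invoke for $\ell_R(C/m_R^{cn}C)$ and $\ell_{R/fR}((R/fR)/m_{R/fR}^{cn})$ are polynomial in $n$ only for $n\gg 0$; since finitely many initial values can be absorbed into the constant $\alpha$, the conclusion still holds for all $n$, but it is worth saying so explicitly.
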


We now present a method introduced by Okounkov \cite{Ok} to compute limits  of multiplicities.
The method has been refined by Lazarsfeld and Musta\c{t}\u{a} \cite{LM} and Kaveh and Khovanskii \cite{KK}.

Suppose that $\Gamma\subset \NN^{d+1}$ is a semigroup. Let $\Sigma(\Gamma)$ be the closed convex cone generated by $\Gamma$ in $\RR^{d+1}$. Define $\Delta(\Gamma)=\Sigma(\Gamma)\cap(\RR^d\times\{1\})$. For $i\in\NN$, let 
$\Gamma_i=\Gamma\cap(\NN^d\times\{i\})$.

\begin{Theorem}\label{Theorem3} (Okounkov \cite{Ok}, Lazarsfeld and Musta\c{t}\u{a} \cite{LM})
Suppose that $\Gamma$ satisfies
\begin{enumerate}
\item[1)] There exist finitely many vectors $(v_i,1)\in\NN^{d+1}$  spanning a semigroup $B\subset \NN^{d+1}$ such that $\Gamma\subset B$ (boundedness).
\item[2)] The subgroup generated by $\Gamma$ is $\ZZ^{d+1}$. 
\end{enumerate}
Then 
$$
\lim_{i\rightarrow\infty}\frac{\# \Gamma_i}{i^d}={\rm vol}(\Delta(\Gamma))
$$
exists.
\end{Theorem}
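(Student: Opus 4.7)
The approach is Okounkov's idea of counting lattice points in scaled bounded convex bodies, together with a filling-up argument that exploits hypothesis~2). Set $C=\Sigma(\Gamma)\subset\RR^{d+1}$, and note that the boundedness assumption~1) forces $C$ to be a strongly convex cone whose intersection with the hyperplane $\{x_{d+1}=1\}$ is the bounded convex body $\Delta(\Gamma)$ (it is contained in the convex hull of the $v_i$'s in $\RR^d\times\{1\}$). In particular, $\Gamma_i\subset(iC)\cap(\ZZ^d\times\{i\})$, which, under the obvious identification, sits inside $(i\Delta(\Gamma))\cap\ZZ^d$.

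For the upper bound, I would invoke the classical fact that for any bounded convex body $K\subset\RR^d$,
$$
\lim_{i\to\infty}\frac{\#(iK\cap\ZZ^d)}{i^d}={\rm vol}(K),
$$
which is a direct Riemann-sum argument using the unit cubes centered at lattice points. Applied to $K=\Delta(\Gamma)$ this immediately yields
$$
\limsup_{i\to\infty}\frac{\#\Gamma_i}{i^d}\le{\rm vol}(\Delta(\Gamma)).
$$

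For the matching lower bound, the plan is to show that $\Gamma_i$ fills up nearly all the available lattice points in $i\Delta(\Gamma)$. Given $\epsilon>0$, choose a finitely generated subsemigroup $S\subset\Gamma$ whose generators span $\ZZ^{d+1}$ as a group (possible by hypothesis~2), after throwing in finitely many elements of $\Gamma$) and whose cone $\Sigma(S)$ approximates $\Sigma(\Gamma)$ closely enough that ${\rm vol}(\Delta(S))>{\rm vol}(\Delta(\Gamma))-\epsilon$. Then apply Khovanskii's theorem on finitely generated subsemigroups of $\ZZ^{d+1}$: there exists $w\in\ZZ^{d+1}$ such that every lattice point of the shifted cone $w+\Sigma(S)$ lies in $S$, hence in $\Gamma$. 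Slicing at height $i$, the set $S\cap(\ZZ^d\times\{i\})$ contains all lattice points of a convex body that differs from $i\Delta(S)$ only by a translation of bounded size; its lattice-point count is thus $\sim i^d\,{\rm vol}(\Delta(S))>i^d({\rm vol}(\Delta(\Gamma))-\epsilon)$. Letting $\epsilon\to0$ closes the gap and produces the limit ${\rm vol}(\Delta(\Gamma))$.

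The principal obstacle is this last step: one must legitimately approximate the (possibly non-finitely-generated) $\Gamma$ by a finitely generated subsemigroup $S$ that simultaneously (i) approximates $\Sigma(\Gamma)$ in the slice volume sense and (ii) still generates all of $\ZZ^{d+1}$ as a group, and then one must correctly apply Khovanskii's theorem to control the ``defect'' of lattice points of $\Sigma(S)$ that are missing from $S$ itself. All of the real geometric/arithmetic content of the theorem lives in this Khovanskii-style filling lemma, and in the choice of $S$; once that is in hand the rest is essentially Riemann sum estimates for volumes of convex bodies.
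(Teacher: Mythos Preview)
The paper does not supply a proof of this theorem; it is quoted as a result of Okounkov and of Lazarsfeld--Musta\c{t}\u{a}, so there is no in-paper argument to compare against. Your outline is precisely the standard proof from \cite{LM}: the upper bound by inclusion into $i\Delta(\Gamma)\cap\ZZ^d$, the lower bound via approximation by a finitely generated subsemigroup that still generates $\ZZ^{d+1}$ as a group, followed by Khovanskii's filling lemma; and you have correctly isolated the only nontrivial step (the simultaneous volume-approximation and group-generation requirement on $S$, together with the Khovanskii translate).
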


We now return to the proof that $\dim N(\hat R)<d$ implies limits exist. Recall that we have reduced to the case that $R$ is a complete domain. Let $\pi:X\rightarrow \mbox{spec}(R)$ be the normalization of the blow up of $m_R$. Since $X$ is excellent, we have that 
$X$ is of finite type over $R$. $X$ is regular in codimension 1, so there exists a closed point $p\in \pi^{-1}(m_R)$ such that $S=\mathcal O_{X,p}$ is regular and dominates $R$. We have an inclusion $R\rightarrow S$ of $d$-dimensional local rings such that $m_S\cap R=m_R$ with equality of quotient fields $Q(R)=Q(S)$. Let $k=R/m_R$, $k'=S/m_S$. Since $S$ is essentially  of finite type over $R$, we have that $[k':k]<\infty$. Let $y_1,\ldots,y_d$ be regular parameters in $S$. Choose $\lambda_1,\ldots,\lambda_d\in\RR_+$ which are rationally independent with $\lambda_i\ge 1$. Prescribe a rank 1 valuation $\nu$ on $Q(R)$ by
$$
\nu(y_1^{i_1}\cdots y_d^{i_d})=i_1\lambda_1+\cdots+i_d\lambda_d
$$
and $\nu(\gamma)=0$ if $\gamma\in S$ is a unit. The value group of $\nu$ is
$$
\Gamma_{\nu}=\lambda_1\ZZ+\cdots+\lambda_d\ZZ\subset \RR.
$$
Let $V_{\nu}$ be the valuation ring of $\nu$. Then 
$$
k'=S/m_S\cong V_{\nu}/m_{\nu}.
$$
For $\lambda\in \RR_+$, define valuation ideals in $V_{\nu}$ by
$$
K_{\lambda}=\{f\in Q(R)\mid \nu(f)\ge \lambda\}
$$
and
$$
K_{\lambda}^+=\{f\in Q(R)\mid \nu(f)>\lambda\}.
$$

Now suppose that $I\subset R$ is an ideal and $\lambda\in\Gamma_{\nu}$ is nonnegative. We have an inclusion
$$
I\cap K_{\lambda}/I\cap K_{\lambda}^+\subset K_{\lambda}/K_{\lambda^+}\cong k'.
$$
Thus
$$
\dim_kI\cap K_{\lambda}/I\cap K_{\lambda}^+\le [k':k].
$$

\begin{Lemma}\label{Prop1}(Lemma 4.3 \cite{C4})  There exists $\alpha\in\ZZ_+$ such that $K_{\alpha n}\cap R\subset m_R^n$ for all $n\in\ZZ_+$
\end{Lemma}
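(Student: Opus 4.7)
The plan is to bound $\nu$ from above by a divisorial valuation dominating $R$, and then to invoke Rees' version of Izumi's theorem (already used in Lemma \ref{LemmaRees}) to convert the resulting $m_S$-adic bound into an $m_R$-adic one. At this point in the argument we have reduced to $R$ being a complete local domain, hence analytically irreducible, so Rees' theorem applies.

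Let $\omega$ denote the $m_S$-adic order on $S$. Because $S$ is regular, $\mathrm{gr}_{m_S}(S)$ is a polynomial ring over $k'$, so $\omega$ is a valuation on $Q(S)=Q(R)$; it is divisorial (its valuation ring is the local ring at the generic point of the exceptional divisor of the blowup of $m_S$ in $\mathrm{Spec}(S)$) and dominates $R$ since $m_R\subset m_S$. Set $M=\max_i\lambda_i$. The first step is to show that $\nu(f)\le M\,\omega(f)$ for every nonzero $f\in S$. Since the $\lambda_i$ are $\QQ$-linearly independent, $\nu$ extends from $S$ to $\hat S\cong k'[[y_1,\ldots,y_d]]$ by $\nu(\sum_I c_I y^I)=\min\{\sum_j i_j\lambda_j:c_I\ne 0\}$, with the minimum attained at a unique index (this is where the $\QQ$-independence matters: it prevents any two monomials from sharing a $\nu$-value and thereby rules out cancellation). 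If $k=\omega(f)$, the expansion of $f$ in $\hat S$ contains some monomial $y^J$ with $|J|=k$, and so $\nu(f)\le\nu(y^J)=\sum_i j_i\lambda_i\le Mk=M\,\omega(f)$.

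Since $R$ is analytically irreducible and $\omega$ is divisorial and dominates $R$, Rees' theorem gives a linear equivalence of the $\omega$-adic and $m_R$-adic topologies on $R$: there exists $a\in\ZZ_+$ with $\omega(f)\ge an\Rightarrow f\in m_R^n$ for every $f\in R$ and $n\in\ZZ_+$. Setting $\alpha=\lceil aM\rceil$, if $f\in K_{\alpha n}\cap R$ then $\nu(f)\ge\alpha n$, so $\omega(f)\ge\nu(f)/M\ge an$ by the first step, and hence $f\in m_R^n$. The delicate point is the extension of $\nu$ to $\hat S$ as a monomial valuation, which rests on the $\QQ$-linear independence of the weights $\lambda_i$; with that in hand the rest is the monomial inequality $\nu\le M\omega$ combined with Rees-Izumi.
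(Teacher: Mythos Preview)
Your argument is correct and takes exactly the route the paper points to: the paper gives no proof beyond citing Lemma~4.3 of \cite{C4} and remarking that it ``uses H\"ubl's linear Zariski subspace theorem or Rees' Izumi Theorem,'' and you have carried out the Izumi--Rees option explicitly, bounding $\nu$ above by a constant multiple of the divisorial $m_S$-adic order $\omega$ and then using linear equivalence of the $\omega$- and $m_R$-adic topologies on the analytically irreducible ring $R$.

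One minor caveat: the identification $\hat S\cong k'[[y_1,\ldots,y_d]]$ that you use to read off $\nu$ from a power-series expansion is valid only in equal characteristic, while Theorem~\ref{Theorem1} is stated for arbitrary local rings. In mixed characteristic $\hat S$ has no coefficient field, so an element cannot be expanded as a power series with coefficients in $k'$. The inequality $\nu(f)\le M\,\omega(f)$ is still correct there, but the justification has to be phrased differently---for instance, by observing directly that the $\nu$-valuation ideal $K_r\cap \hat S$ is contained in $m_{\hat S}^{\lceil r/M\rceil}$ (any monomial $y^I$ with $I\cdot\lambda\ge r$ has $|I|\ge r/M$, and these monomials generate $K_r\cap\hat S$ up to closure), or by running your expansion argument inside a Cohen structure presentation of $\hat S$ rather than over $k'$.
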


The proof uses Huebl's linear Zariski subspace theorem \cite{Hu} or Rees' Izumi Theorem \cite{Re}. The assumption that $R$ is analytically irreducible is necessary for the lemma. Recalling the constant $c$ of (\ref{eq13}), let $\beta=\alpha c$. We then have that
\begin{equation}\label{eqF30}
K_{\beta n}\cap R\subset m_R^{nc}\subset I_n
\end{equation}
for all $n$. For $1\le t\le [k':k]$, define

$$
\Gamma^{(t)}=\left\{(n_1,\ldots,n_d,i)\in \NN^{d+1}\mid \begin{array}{l}
\dim_k I_i\cap K_{n_1\lambda_1+\cdots+n_d\lambda_d}/I_i\cap K^+_{n_1\lambda_1+\cdots+n_d\lambda_d}\ge t\\
\mbox{and $n_1+\cdots+n_d\le\beta i$}\end{array}\right\}
$$
and

$$
\hat\Gamma^{(t)}=\left\{(n_1,\ldots,n_d,i)\in \NN^{d+1}\mid \begin{array}{l}
\dim_k R\cap K_{n_1\lambda_1+\cdots+n_d\lambda_d}/R\cap K^+_{n_1\lambda_1+\cdots+n_d\lambda_d}\ge t\\
\mbox{and $n_1+\cdots+n_d\le\beta i$}\end{array}\right\}
$$

\begin{Lemma}\label{LemmaM10}(Lemma 4.4 \cite{C6}) Suppose that $t\ge 1$, $0\ne f\in I_i$, $0\ne g\in I_j$ and 
$$
\dim_kI_i\cap K_{\nu(f)}/I_i\cap K_{\nu(f)}^+\ge t.
$$
Then
$$
\dim_kI_{i+j}\cap K_{\nu(fg)}/I_{i+j}\cap K_{\nu(fg)}^+\ge t.
$$
\end{Lemma}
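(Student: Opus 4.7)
The plan is to prove the statement by direct construction: I will produce $t$ witnesses for the right-hand inequality by multiplying the hypothesized $t$ witnesses for the left-hand inequality by $g$.

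First I would choose $h_1,\ldots,h_t\in I_i\cap K_{\nu(f)}$ whose residues in $K_{\nu(f)}/K_{\nu(f)}^+$ are $k$-linearly independent, as supplied by the hypothesis. Then multiplication by $g$ sends each $h_l$ into $I_{i+j}\cap K_{\nu(fg)}$: the containment $h_lg\in I_iI_j\subset I_{i+j}$ uses that $\{I_n\}$ is a graded family, and $\nu(h_lg)=\nu(h_l)+\nu(g)\ge \nu(f)+\nu(g)=\nu(fg)$ uses multiplicativity of $\nu$ together with $h_l\in K_{\nu(f)}$.

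Next I would verify that multiplication by $g$ descends to an injective $k$-linear map
$$
\phi\colon K_{\nu(f)}/K_{\nu(f)}^+\longrightarrow K_{\nu(fg)}/K_{\nu(fg)}^+,
$$
well-definedness following from the observation that $h\in K_{\nu(f)}^+$ implies $\nu(hg)>\nu(fg)$, and injectivity following by the reverse implication: if $hg\in K_{\nu(fg)}^+$ then $\nu(h)=\nu(hg)-\nu(g)>\nu(f)$, so $h\in K_{\nu(f)}^+$. Consequently the residues of $h_1g,\ldots,h_tg$ in $K_{\nu(fg)}/K_{\nu(fg)}^+$ are $k$-linearly independent. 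Since the natural map $I_{i+j}\cap K_{\nu(fg)}/I_{i+j}\cap K_{\nu(fg)}^+\hookrightarrow K_{\nu(fg)}/K_{\nu(fg)}^+$ is $k$-linear and injective, these residues remain $k$-linearly independent inside the smaller space, delivering the bound $\dim_k I_{i+j}\cap K_{\nu(fg)}/I_{i+j}\cap K_{\nu(fg)}^+\ge t$.

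I do not anticipate a substantive obstacle. The lemma amounts to the stability of leading-term vector spaces under multiplication, and the proof is a routine combination of the graded family property $I_iI_j\subset I_{i+j}$ with the multiplicativity of the valuation $\nu$; the only mild care needed is to check the well-definedness and injectivity of the multiplication-by-$g$ map on the quotient, both of which are immediate from $\nu(hg)=\nu(h)+\nu(g)$.
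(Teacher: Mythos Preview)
Your proof is correct and follows the natural multiplication-by-$g$ argument: the graded family condition $I_iI_j\subset I_{i+j}$ places $h_lg$ in $I_{i+j}$, and additivity $\nu(hg)=\nu(h)+\nu(g)$ gives both well-definedness and injectivity of the induced map on $K_{\nu(f)}/K_{\nu(f)}^+$. The paper does not actually supply a proof here, citing the statement as Lemma~4.4 of \cite{C6}; your construction is exactly the expected one (and the paper later remarks that the analogous Lemma~\ref{Lemma2} is proved in the same way).
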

Since $\nu(fg)=\nu(f)+\nu(g)$, we conclude that when they are nonempty, $\Gamma^{(t)}$ and $\hat \Gamma^{(t)}$ are subsemigroups of $\NN^{d+1}$.

Given $\lambda=n_1\lambda_1+\cdots+n_d\lambda_d$ such that $n_1+\cdots +n_d\le\beta i$, we have that
$$
\dim_k K_{\lambda}\cap I_i/K_{\lambda}^+\cap I_i
=\#\{t\mid (n_1,\ldots, n_d,i)\in \Gamma^{(t)}\}.
$$

recalling (\ref{eqF30}), we have that
\begin{equation}\label{eqM11}
\begin{array}{lll}
\ell_R(R/I_i)&=&\ell_R(R/K_{\beta i}\cap R)-\ell_R(I_i/K_{\beta i}\cap I_i)\\
&=& (\sum_{0\le \lambda<\beta i}\dim_k K_{\lambda}\cap R/K_{\lambda}^+\cap R)
-(\sum_{0\le \lambda<\beta i}\dim_k K_{\lambda}\cap I_i/K_{\lambda}^+\cap I_i)\\
&=&(\sum_{t=1}^{[k':k]}\#\hat\Gamma_i^{(t)})-(\sum_{t=1}^{[k':k]}\#\Gamma_i^{(t)})
\end{array}
\end{equation}
where 
$\Gamma_i^{(t)}=\Gamma^{(t)}\cap(\NN^d\times\{i\})$ and $\hat\Gamma_i^{(t)}=\hat\Gamma^{(t)}\cap(\NN^d\times\{i\})$.
The semigroups $\Gamma^{(t)}$ and $\hat \Gamma^{(t)}$ satisfy the hypotheses of Theorem \ref{Theorem3}. Thus
\begin{equation}\label{eqM12}
\lim_{i\rightarrow\infty}\frac{\#\Gamma_i^{(t)}}{i^d}={\rm vol}(\Delta(\Gamma^{(t)})
\end{equation}
and
\begin{equation}\label{eqM13}
\lim_{i\rightarrow\infty}\frac{\#\hat\Gamma_i^{(t)}}{i^d}={\rm vol}(\Delta(\hat\Gamma^{(t)})
\end{equation}
so that
$$
\lim_{i\rightarrow\infty}\frac{\ell_R(R/I_i)}{i^d}
$$
exists.
\section{Proof of Theorem \ref{TheoremM7}}\label{Section6}
We begin by refining our calculation of the limit in the previous section. Let notation be as in the previous section.
Forgetting the 1 in the $(d+1)^{st}$ component, we may regard $\Delta(\Gamma^{(t)})$ and $\Delta(\hat\Gamma^{(t)})$ as subsets of $\RR^d$.

\begin{Lemma}\label{LemmaM1} Suppose that $\Gamma^{(t)}\ne \emptyset$. Then $\Delta(\Gamma^{(t)})=\Delta(\Gamma^{(1)})$.
\end{Lemma}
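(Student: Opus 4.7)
The plan is to split the claimed equality into two inclusions. One direction, $\Delta(\Gamma^{(t)}) \subset \Delta(\Gamma^{(1)})$, is immediate from the definition: dimension $\ge t$ implies dimension $\ge 1$, so $\Gamma^{(t)} \subset \Gamma^{(1)}$, and this containment is inherited by the closed convex cones $\Sigma(\cdot)$ and by their intersections with the hyperplane $\RR^d \times \{1\}$. The substantive content of the lemma is the reverse inclusion $\Delta(\Gamma^{(1)}) \subset \Delta(\Gamma^{(t)})$, and this is where I would use the hypothesis $\Gamma^{(t)} \ne \emptyset$ together with Lemma~\ref{LemmaM10}.

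For the hard direction, I would fix once and for all a point $(m_1,\ldots,m_d,j) \in \Gamma^{(t)}$ with a witness $0 \ne h \in I_j$ satisfying $\nu(h) = m_1\lambda_1 + \cdots + m_d\lambda_d$ and $\dim_k I_j \cap K_{\nu(h)}/I_j \cap K^+_{\nu(h)} \ge t$. Given any $(n_1,\ldots,n_d,i) \in \Gamma^{(1)}$ with witness $0 \ne f \in I_i$ having $\nu(f) = n_1\lambda_1 + \cdots + n_d\lambda_d$, I would apply Lemma~\ref{LemmaM10} with $h$ in the role of the element realizing dimension $\ge t$ and $f^k$ in the role of the multiplier (note that $f^k \ne 0$ since $R$ is a domain, and $f^k h \in I_{ki+j}$ since $\{I_n\}$ is a graded family). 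This gives
\[
\dim_k I_{ki+j} \cap K_{\nu(f^k h)}/I_{ki+j} \cap K^+_{\nu(f^k h)} \ge t,
\]
while $\nu(f^k h) = (kn_1+m_1)\lambda_1 + \cdots + (kn_d+m_d)\lambda_d$ and the bound $(kn_1+m_1) + \cdots + (kn_d+m_d) \le k\beta i + \beta j = \beta(ki+j)$ hold by combining the bounds for $(n,i)$ and $(m,j)$. Hence $(kn_1+m_1,\ldots,kn_d+m_d, ki+j) \in \Gamma^{(t)}$ for every $k \ge 1$.

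Rescaling each of these points by $1/(ki+j)$ places them in $\Delta(\Gamma^{(t)})$, and as $k \to \infty$ they converge to $(n_1/i,\ldots,n_d/i,1)$. Since $\Delta(\Gamma^{(t)})$ is closed (a closed cone cut by an affine hyperplane), the limit also lies in $\Delta(\Gamma^{(t)})$. To finish, I would invoke the standard fact that the set $\{(n_1/i,\ldots,n_d/i,1) : (n_1,\ldots,n_d,i)\in\Gamma^{(1)}\}$ generates $\Delta(\Gamma^{(1)})$ as a closed convex hull: any point of $\Sigma(\Gamma^{(1)}) \cap (\RR^d \times \{1\})$ is a limit of positive combinations of elements of $\Gamma^{(1)}$ whose last-coordinate sum tends to $1$, which after normalization expresses it as a limit of convex combinations of the normalized points above. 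Closedness and convexity of $\Delta(\Gamma^{(t)})$ then force $\Delta(\Gamma^{(1)}) \subset \Delta(\Gamma^{(t)})$. The main obstacle is really just the Lemma~\ref{LemmaM10} step transferring the dimension condition from $(m,j)$ to the perturbed multiples $k(n,i) + (m,j)$; the density/closure manipulation at the end is routine convex geometry.
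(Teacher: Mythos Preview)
Your proposal is correct and follows essentially the same approach as the paper: fix a point of $\Gamma^{(t)}$, add large multiples of a given point of $\Gamma^{(1)}$ via Lemma~\ref{LemmaM10}, rescale, and take a limit. You are in fact slightly more careful than the paper, which omits the verification that the constraint $n_1+\cdots+n_d\le\beta i$ is preserved under the perturbation and leaves the final closed-convex-hull step implicit.
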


\begin{proof} We have that $\Gamma^{(t)}\subset \Gamma^{(1)}$ so $\Delta(\Gamma^{(t)})\subset \Delta(\Gamma^{(t)})$.

Suppose that  $(l_1,\ldots,l_d,i)\in \Gamma^{(1)}$. We must show that there exists $(m_1,\ldots,m_d,j)\in \Gamma^{(t)}$ such that $\|u-v\|<\epsilon$ where
$$
u=\frac{1}{i}(l_1,\ldots,l_d)\in \Delta(\Gamma^{(1)})\mbox{ and }v=\frac{1}{j}(m_1,\ldots,m_d)\in \Delta(\Gamma^{(t)}).
$$
By assumption, there exists $(n_1,\ldots,n_d,k)\in \Gamma^{(t)}$. Let
$$
w=\frac{1}{k}(n_1,\ldots,n_d)\in \Delta(\Gamma^{(t)}).
$$
$$
(sl_1+n_1,sl_2+n_2,\ldots,sl_d+n_d,si+k)\in \Gamma^{(t)}
$$
for all $s\in \NN$ by Lemma \ref{LemmaM10}. Thus
$$
v=\frac{1}{si+k}(sl_1+n_1,sl_2+n_2,\ldots,sl_d+n_d)\in \Delta(\Gamma^{(t)}).
$$
We write
$$
v=\left(\frac{1}{1+\frac{k}{si}}\right)u+\frac{k}{si+k}w
$$
which is arbitrarily close to $u$ for $s$ sufficiently large.
\end{proof}

The same argument shows that
\begin{equation}\label{eqM3}
\Delta(\hat\Gamma^{(t)})=\Delta(\hat\Gamma^{(t)})
\end{equation}
if $\hat\Gamma^{(t)}\ne\emptyset$.

\begin{Lemma}\label{LemmaM2} We have that $\hat\Gamma^{(t)}\ne\emptyset$ for $1\le t\le [k':k]$.
\end{Lemma}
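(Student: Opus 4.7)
The plan is to exhibit, for each $t$ with $1\le t\le [k':k]$, an explicit point of $\hat\Gamma^{(t)}$. Since the condition defining $\hat\Gamma^{(t)}$ asks a lower bound of $t$ on $\dim_k R\cap K_\lambda/R\cap K_\lambda^+$, it is enough to produce $[k':k]$ elements of $R$ lying in a common valuation ideal $K_\lambda$ whose classes in $K_\lambda/K_\lambda^+$ are $k$-linearly independent; this simultaneously handles all $t\le [k':k]$.

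First I would use the isomorphism $V_\nu/m_\nu\cong k'=S/m_S$ noted in Section \ref{Section5} to pick a $k$-basis $\bar g_1,\ldots,\bar g_{[k':k]}$ of $k'$ and lift it to units $g_1,\ldots,g_{[k':k]}\in S$ (each nonzero residue lifts to a unit since $S$ is local). Because $R$ and $S$ have the same quotient field, every $g_j$ can be written $g_j=a_j/b_j$ with $a_j,b_j\in R$, so $b:=\prod_j b_j$ is a nonzero element of $R$ with $bg_j\in R$ for every $j$. Regarding $b$ as a nonzero element of the regular local ring $S$, factor $b=u\prod y_i^{n_i}$ with $u\in S$ a unit and $n_i\in\NN$; then $\nu(b)=\lambda$ where $\lambda:=n_1\lambda_1+\cdots+n_d\lambda_d$, and since $\nu(g_j)=0$ each $bg_j$ belongs to $R\cap K_\lambda$.

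Next I would verify the required $k$-linear independence. Multiplication by $b$ defines a $V_\nu/m_\nu$-linear, and hence $k$-linear, isomorphism from $V_\nu/m_\nu$ onto $K_\lambda/K_\lambda^+$ sending $\bar x\mapsto\overline{bx}$; well-definedness and bijectivity are immediate from $\nu(bx)=\nu(b)+\nu(x)$. Under this isomorphism the basis $\bar g_1,\ldots,\bar g_{[k':k]}$ of $k'$ over $k$ is sent to $\overline{bg_1},\ldots,\overline{bg_{[k':k]}}$, so these classes are $k$-linearly independent in $K_\lambda/K_\lambda^+$. Since each $bg_j$ actually lies in $R\cap K_\lambda$, and $R\cap K_\lambda/R\cap K_\lambda^+$ sits inside $K_\lambda/K_\lambda^+$ as a $k$-subspace, we conclude that $\dim_k R\cap K_\lambda/R\cap K_\lambda^+\ge [k':k]\ge t$.

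Finally, pick any $i\in\ZZ_+$ large enough that $n_1+\cdots+n_d\le\beta i$; then $(n_1,\ldots,n_d,i)\in\hat\Gamma^{(t)}$, proving nonemptiness. The only subtle point is the common-denominator maneuver together with the factorization of $b$ in $S$: this is what lets us realize an abstract $k$-basis of the larger residue field $k'$ by honest elements of $R$ while simultaneously controlling the valuation so that it is expressed as a nonnegative integer combination $\sum n_i\lambda_i$. Everything else is routine bookkeeping with the valuation.
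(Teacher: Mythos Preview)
Your approach is essentially the same as the paper's: lift a $k$-basis of $k'\cong V_\nu/m_\nu$ to elements of valuation zero, clear denominators so as to land in $R$, and observe that multiplication by the common denominator carries the basis to $k$-linearly independent classes in $R\cap K_\lambda/R\cap K_\lambda^+$. The paper works directly with fractions $f_i=g_i/h$ in $Q(R)$ and a single common denominator $h\in R$, while you lift into $S$ first and take a product of individual denominators; these are cosmetic differences, and your explicit remark about choosing $i$ large enough so that $n_1+\cdots+n_d\le\beta i$ is a point the paper leaves implicit.

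There is, however, one genuine slip. The factorization $b=u\prod y_i^{n_i}$ with $u\in S$ a unit is false once $d\ge 2$: a regular local ring of dimension $\ge 2$ has infinitely many non-associate irreducibles, not just the regular parameters (for instance $y_1+y_2$ is not a monomial times a unit). Fortunately you only use this alleged factorization to conclude that $\nu(b)=n_1\lambda_1+\cdots+n_d\lambda_d$ with $n_i\in\NN$, and that conclusion is correct for the right reason: by the construction of $\nu$, every nonzero element of $S$ has $\nu$-value in the subsemigroup $\NN\lambda_1+\cdots+\NN\lambda_d$ of $\Gamma_\nu$, and the $n_i$ are uniquely determined because $\lambda_1,\ldots,\lambda_d$ are rationally independent. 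With this correction your proof stands.
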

\begin{proof} Let $s=[k':k]$ and let $f_1,\ldots,f_s\in Q(R)$ be such that their classes in $V_{\nu}/m_{\nu}$ are a $k$-basis of $k'=V_{\nu}/m_{\nu}$. There exist $g_1,\ldots,g_s,h\in R$ such that 
$$
f_i=\frac{g_i}{h}\mbox{ for }1\le i\le s.
$$
Let $\lambda=\nu(h)$. Then $\nu(g_i)=\lambda$ for $1\le i\le s$ since $\nu(f_i)=0$. Suppose that 
$$
\nu(c_1g_1+\cdots+c_sg_s)>\lambda
$$
for some $c_1,\ldots,c_s\in R$. Let $b=c_1g_1+\cdots+c_sg_s$. 
$$
c_1f_1+\cdots+c_sf_s=\frac{b}{h}
$$
and $\nu(\frac{b}{h})>0$, so all for all $i$, $c_i\in m_{\nu}\cap R=m_R$. Thus the classes of $g_1,\ldots,g_s$ are linearly independent over $k$ in $R\cap K_{\lambda}/R\cap K^+_{\lambda}$, and so $\hat \Gamma^{(s)}\ne \emptyset$.
\end{proof}

We deduce from Lemma \ref{LemmaM2} and Lemma \ref{LemmaM10} that 
\begin{equation}\label{eqM4}
\Gamma^{(t)}\ne \emptyset\mbox{ for }1\le t\le [k':k].
\end{equation}

We obtain the following refinement of Theorem \ref{Theorem1}.

\begin{Theorem}\label{TheoremM5} Suppose that $R$ is a $d$-dimensional analytically irreducible noetherian local ring, and $\{I_n\}$ is a graded family of $m_R$-primary ideals in $R$. Then
$$
\lim_{n\rightarrow\infty} \frac{\ell_R(R/I_n)}{n^d}=[k':k]\left({\rm vol}(\Delta(\hat\Gamma^{(1)}))-{\rm vol}(\Delta(\Gamma^{(1)}))\right).
$$
\end{Theorem}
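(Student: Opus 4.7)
The plan is to combine the length formula \eqref{eqM11} with the volume identities \eqref{eqM12}, \eqref{eqM13} and then collapse the resulting sums using Lemma \ref{LemmaM1}, the identity \eqref{eqM3}, Lemma \ref{LemmaM2}, and \eqref{eqM4}. Since $R$ is analytically irreducible, the entire setup of Section \ref{Section5} (the regular local ring $S$ dominating $R$, the rationally independent $\lambda_i$, the rank one valuation $\nu$, the constant $\beta = \alpha c$, and the semigroups $\Gamma^{(t)}$, $\hat\Gamma^{(t)} \subset \NN^{d+1}$) applies directly; in particular, \eqref{eqF30} is available so the cutoff $\lambda < \beta i$ used in deriving \eqref{eqM11} is valid.

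First I would divide \eqref{eqM11} by $i^d$ and pass to the limit. Since $1 \le [k':k] < \infty$ is a fixed finite bound, this interchanges with the finite sums, and \eqref{eqM12}, \eqref{eqM13} give
$$
\lim_{i\to\infty}\frac{\ell_R(R/I_i)}{i^d}
= \sum_{t=1}^{[k':k]} \operatorname{vol}(\Delta(\hat\Gamma^{(t)})) \;-\; \sum_{t=1}^{[k':k]} \operatorname{vol}(\Delta(\Gamma^{(t)})).
$$
Before doing this one must check that $\Gamma^{(t)}$ and $\hat\Gamma^{(t)}$ meet the hypotheses of Theorem \ref{Theorem3}: the boundedness condition follows from $n_1+\cdots+n_d \le \beta i$, and the group generated is $\ZZ^{d+1}$ by the standard argument from Section \ref{Section5}, so both limits genuinely equal the asserted volumes.

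Next I would collapse each sum to a single term. By Lemma \ref{LemmaM2} we have $\hat\Gamma^{(t)}\ne\emptyset$ for every $1\le t\le [k':k]$, and by \eqref{eqM4} the same holds for $\Gamma^{(t)}$. Hence Lemma \ref{LemmaM1} and the parallel statement \eqref{eqM3} apply at every index $t$, giving $\Delta(\Gamma^{(t)}) = \Delta(\Gamma^{(1)})$ and $\Delta(\hat\Gamma^{(t)}) = \Delta(\hat\Gamma^{(1)})$. Therefore each sum has $[k':k]$ equal summands, and
$$
\lim_{i\to\infty}\frac{\ell_R(R/I_i)}{i^d}
= [k':k]\bigl(\operatorname{vol}(\Delta(\hat\Gamma^{(1)})) - \operatorname{vol}(\Delta(\Gamma^{(1)}))\bigr),
$$
which is the desired formula.

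The main obstacle, such as it is, is bookkeeping rather than substance: one should be careful that \eqref{eqM4} is really in force for the full range $1 \le t \le [k':k]$ and not just for $t = 1$. This follows because Lemma \ref{LemmaM2} produces an element of $\hat\Gamma^{([k':k])}$, and then Lemma \ref{LemmaM10} (applied with $f\in R$ witnessing membership in $\hat\Gamma^{([k':k])}$ and any nonzero $g\in I_j$) transfers this witness into $\Gamma^{([k':k])}$, after which $\Gamma^{(t)}\supset \Gamma^{([k':k])}$ for $t\le [k':k]$ finishes the nonemptiness verification. Once this is in hand, no further work is required.
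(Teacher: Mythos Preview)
Your proposal is correct and follows exactly the route the paper takes: the paper's own proof simply says that the result follows from equations \eqref{eqM11}, \eqref{eqM12}, \eqref{eqM13}, Lemma \ref{LemmaM1}, equation \eqref{eqM3}, Lemma \ref{LemmaM2}, and \eqref{eqM4}, and you have faithfully expanded that one-line reference into a detailed argument. Your final paragraph re-deriving \eqref{eqM4} is also the same deduction the paper sketches just before stating \eqref{eqM4} (Lemma \ref{LemmaM2} combined with Lemma \ref{LemmaM10} at level $i=0$).
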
 

\begin{proof} The proof follows from equations (\ref{eqM11}), (\ref{eqM12}), (\ref{eqM13}), Lemma \ref{LemmaM1} and equation (\ref{eqM3}), Lemma \ref{LemmaM2} and (\ref{eqM4}).
\end{proof}

We now introduce some more notation, in order to state the ``Reversed Brunn-Minkowski inequality'' (page 3 of \cite{KT}, Theorem 2.4 \cite{KK1}).
 Let $C$ be a closed, strictly convex cone in $\RR^d$ with apex at the origin. A closed convex set $D\subset C$ is $C$-convex if for any $x\in D$ we have that $x+C\subset D$. $D$ is cobounded if $C\setminus D$ is bounded. Define
 $$
 {\rm covol}(D)={\rm vol}(C\setminus D).
 $$
 
 \begin{Theorem}\label{TheoremM21}(Khovanskii and Timorin) Let $D_1$ and $D_2$ be cobounded $C$-convex regions in a cone $C$. Then 
 \begin{equation}\label{eqM6}
 {\rm covol}^{\frac{1}{d}}(D_1)+{\rm covol}^{\frac{1}{d}}(D_2)\ge{\rm covol}^{\frac{1}{d}}(D_1+D_2).
 \end{equation}
 \end{Theorem}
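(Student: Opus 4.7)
The plan is to reduce the inequality to the classical Minkowski $L^d$-inequality by setting up polar coordinates adapted to the cone $C$. Since $C$ is closed and strictly convex (pointed), there is a linear functional $\ell$ strictly positive on $C\setminus\{0\}$, and the cross-section $S=C\cap\{\ell=1\}$ is compact. Every nonzero $x\in C$ is uniquely of the form $x=r\xi$ with $r>0$ and $\xi\in S$, and Lebesgue measure on $C$ decomposes as $dV = r^{d-1}\,dr\,d\mu(\xi)$ for a finite Borel measure $\mu$ on $S$.

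For any cobounded $C$-convex region $D\subset C$, I would introduce the radial function
$$
\rho_D(\xi) = \min\{r\ge 0 : r\xi\in D\},\qquad \xi\in S.
$$
The minimum exists because $D$ is closed; it is bounded on $S$ because $D$ is cobounded and $S$ is compact; and by $C$-convexity $\{r\ge 0 : r\xi\in D\} = [\rho_D(\xi),\infty)$. Fubini in polar coordinates then yields
$$
{\rm covol}(D) = \int_S\int_0^{\rho_D(\xi)} r^{d-1}\,dr\,d\mu(\xi) = \frac{1}{d}\int_S \rho_D(\xi)^d\,d\mu(\xi),
$$
so that $(d\cdot{\rm covol}(D))^{1/d} = \|\rho_D\|_{L^d(\mu)}$. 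Moreover, $\rho_D$ is subadditive in $D$: since $\rho_{D_1}(\xi)\xi\in D_1$ and $\rho_{D_2}(\xi)\xi\in D_2$, their sum $(\rho_{D_1}(\xi)+\rho_{D_2}(\xi))\xi$ lies in $D_1+D_2$, so
$$
\rho_{D_1+D_2}(\xi) \le \rho_{D_1}(\xi) + \rho_{D_2}(\xi).
$$

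Combining the subadditivity, the integral representation, and the classical Minkowski inequality in $L^d(\mu)$ gives
$$
\|\rho_{D_1+D_2}\|_{L^d} \le \|\rho_{D_1}+\rho_{D_2}\|_{L^d} \le \|\rho_{D_1}\|_{L^d}+\|\rho_{D_2}\|_{L^d},
$$
which, after dividing by $d^{1/d}$, is exactly (\ref{eqM6}). The main technical point will be verifying that $\rho_D$ is Borel measurable (it is lower semicontinuous because $D$ is closed) and that $D_1+D_2$ is itself cobounded so that ${\rm covol}(D_1+D_2)$ is defined; the latter follows from the uniform bound $\rho_{D_1+D_2}\le R_1+R_2$ on $S$ produced by the subadditivity, combined with the compactness of $S$ which forces $C\setminus(D_1+D_2)$ to be bounded. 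Once these measure-theoretic and boundedness issues are in place, the argument reduces cleanly to the $L^d$ Minkowski inequality.
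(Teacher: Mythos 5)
The paper does not prove this statement; it is quoted directly from Khovanskii--Timorin \cite{KT} and Kaveh--Khovanskii (Theorem 2.4 of \cite{KK1}) and then used as a black box, so there is no internal argument to compare against. Your proof is correct and is, as far as I am aware, the standard way this ``reversed Brunn--Minkowski'' inequality is established: fix a compact affine cross-section $S$ of the pointed cone $C$, decompose Lebesgue measure in polar coordinates $dV=r^{d-1}\,dr\,d\mu(\xi)$, express ${\rm covol}(D)=\frac{1}{d}\int_S\rho_D(\xi)^d\,d\mu(\xi)$ via the radial function $\rho_D$, note the subadditivity $\rho_{D_1+D_2}\le\rho_{D_1}+\rho_{D_2}$, and finish with the classical Minkowski inequality in $L^d(\mu)$.

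One technicality you mention but do not fully discharge: the Minkowski sum $D_1+D_2$ of two closed convex sets need not be closed, so $D_1+D_2$ is not literally a ``cobounded $C$-convex region'' in the sense of the surrounding text, $\rho_{D_1+D_2}$ is defined only as an infimum rather than a minimum, and its lower semicontinuity is not automatic. None of this breaks the argument. The ray $\{r\ge 0: r\xi\in D_1+D_2\}$ is still upward-closed by $C$-convexity of either summand, hence is an interval with left endpoint $\rho_{D_1+D_2}(\xi)$, and the inclusion $C\setminus(D_1+D_2)\subset\{r\xi: r<\rho_{D_1}(\xi)+\rho_{D_2}(\xi)\}$ already gives ${\rm covol}(D_1+D_2)\le\frac{1}{d}\int_S(\rho_{D_1}+\rho_{D_2})^d\,d\mu$, with integrand lower semicontinuous and hence measurable; this is all that Minkowski's inequality needs. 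Alternatively, replace $D_1+D_2$ by its closure: the difference sits inside the boundary of a convex set, which has Lebesgue measure zero, so ${\rm covol}$ is unchanged and the closure is an honest cobounded $C$-convex region. With that adjustment the proof is complete.
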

 
 Define
 $$
 \Gamma(I_*)=\{(n_1,\ldots,n_d,i)\in \NN^{d+1}\mid I_i\cap K_{n_1\lambda_1+\cdots+n_d\lambda_d}/I_i\cap K_{n_1\lambda_1+\cdots+m_d\lambda_d}^+\ne 0\}
 $$ 
 and
 $$
 \Gamma(R)=\{(n_1,\ldots,n_d,i)\in \NN^{d+1}\mid R\cap K_{n_1\lambda_1+\cdots+n_d\lambda_d}/R\cap K_{n_1\lambda_1+\cdots+m_d\lambda_d}^+\ne 0\}.
 $$
 Forgetting the 1 in the $(d+1)^{\rm st}$ coefficient, we can regard $\Delta(\Gamma(R))$ and $\Delta(\Gamma(I_*))$ as subsets of $\RR^d$.

$\Delta(\Gamma(R))$ is a strongly convex closed $d$-dimensional cone in $\RR^d$ (since $R$ is a ring). Further, $\Delta(\Gamma(I_*))\subset \Delta(\Gamma(R))$ is a closed convex subset which is $\Delta(\Gamma(R))$-convex (since the $I_i$ are ideals in $R$. 
Further $\Delta(\Gamma(I_*))$ is cobounded by (\ref{eqF30}).
We have that
$$
{\rm covol}(\Delta(\Gamma(I_*))={\rm vol}(\Delta(\Gamma(R))\setminus \Delta(\Gamma(I_*)))={\rm vol}(\Delta(\hat\Gamma^{(1)}))    -{\rm vol}(\Delta(\Gamma^{(1)}))
$$
by (\ref{eqF30}). Theorem \ref{TheoremM5} now becomes
\begin{equation}\label{eqM30}
\lim_{i\rightarrow\infty}\frac{\ell_R(R/I_i)}{i^d}=[k':k]{\rm covol}(\Delta(\Gamma(I_*)).
\end{equation}

We now give the proof of Theorem \ref{TheoremM7}.

We first prove the theorem in the case that $R$ is analytically irreducible. We have  that the Minkowski sum
$$
\Delta(\Gamma(I_*))+\Delta(\Gamma(J_*))\subset \Delta(\Gamma(K_*)).
$$
This follows since if $(m_1,\ldots,m_d,i)\in \Gamma(I_*)$ and $(n_1,\ldots,n_d,j)\in \Gamma(J_*)$, then
$$
(jm_1+in_1,\ldots,jm_d+in_d,ij)\in \Gamma(K_*),
$$
so
$$
\frac{1}{i}(m_1,\ldots,m_d)+\frac{1}{j}(n_1,\ldots,n_d)\in \Delta(\Gamma(K_*)).
$$
Thus
\begin{equation}\label{eqM8}
{\rm covol}^{\frac{1}{d}}(\Delta(\Gamma(I_*)))+{\rm covol}^{\frac{1}{d}}(\Delta(\Gamma(J_*)))
\ge{\rm covol}^{\frac{1}{d}}(\Delta(\Gamma(I_*))+\Delta(\Gamma(J_*)))
\ge {\rm covol}^{\frac{1}{d}}(\Delta(\Gamma(K_*)))
\end{equation}
by (\ref{eqM6}). The theorem now follows, in the case that $R$ is analytically irreducible, from (\ref{eqM30}).

Now suppose that $R$ is an arbitrary local ring of dimension $d$ with $\dim N(\hat R)<d$. Let $A=\hat R/N(\hat R)$ and let $P_i$, for $1\le i\le t$ be the minimal prime ideals of $A$. Let $A_i=A/P_i$. Each $A_i$ is an analytically irreducible local ring of dimension $d$. As in the first part of the proof of Theorem \ref{Theorem1}, using Lemma \ref{LemmaM30}, we have that
\begin{equation}\label{eqM7}
\lim_{n\rightarrow \infty}\frac{\ell_R(R/I_n)}{n^d}=\lim_{n\rightarrow\infty}\frac{\ell_R(A/I_nA)}{n^d}=\sum_{i=1}^t
\lim_{n\rightarrow\infty}\frac{\ell_R(A_i/I_nA_i)}{n^d}.
\end{equation}
For $1\le i\le t$, let
$$
a_i=\lim_{n\rightarrow\infty}\frac{\ell_R(A_i/I_nA_i)}{n^d},
b_i=\lim_{n\rightarrow\infty}\frac{\ell_R(A_i/J_nA_i)}{n^d},
c_i=\lim_{n\rightarrow\infty}\frac{\ell_R(A_i/K_nA_i)}{n^d}.
$$
By Theorem \ref{TheoremM7} in the case that $R$ is analytically irreducible, we have that
$$
a_i^{\frac{1}{d}}+b_i^{\frac{1}{d}}\ge c_i^{\frac{1}{d}}\mbox{ for }1\le i\le t.
$$
Setting 
$$
\overline a_i=a_i^{\frac{1}{d}},\,\,\overline b_i=b_i^{\frac{1}{d}}\mbox{ for }1\le i\le t,
$$
we have by Minkowski's inequality (Formula (2.11.4) on page 31 of \cite{HLP})
$$
\begin{array}{lll}
(\sum_{i=1}^t a_i)^{\frac{1}{d}}+(\sum_{i=1}^t b_i)^{\frac{1}{d}}
&=&(\sum_{i=1}^t \overline a_i^d)^{\frac{1}{d}}+(\sum_{i=1}^t \overline b_i^d)^{\frac{1}{d}}\\
&\ge&(\sum_{i=1}^t(\overline a_i+\overline b_i)^d)^{\frac{1}{d}}\\
&\ge&(\sum_{i=1}^tc_i)^{\frac{1}{d}}.
\end{array}
$$
By (\ref{eqM7}), we have established the conclusions of Theorem \ref{TheoremM7}.

 \section{Proof of Theorem \ref{TheoremC}}\label{Section7}
 
In this section we give the proof of Theorem \ref{TheoremC}.

\subsection{More cones associated to semigroups}\label{SecCone}
We first summarize some results on semigroups and associated cones from \cite{KK}, which generalize Theorem \ref{Theorem3} stated in Section \ref{Section5}.

Suppose that $S$ is a subsemigroup of $\ZZ^{d}\times \NN$ which is not contained in $\ZZ^d\times\{0\}$. Let $L(S)$ be the subspace of $\RR^{d+1}$ which is generated by $S$, and let $M(S)=L(S)\cap(\RR^d\times\RR_{\ge 0})$. 

Let $\mbox{Con}(S)\subset L(S)$ be the closed convex cone which is the closure of  the set of all linear combinations $\sum \lambda_is_i$ with $s_i\in S$ and $\lambda_i\ge 0$.

$S$ is called {\it strongly nonnegative} (Section 1.4 \cite{KK}) if $\mbox{Cone}(S)$  intersects $\partial M(S)$ only at the origin (this is equivalent to being strongly admissible (Definition 1.9 \cite{KK}) since with our assumptions, $\mbox{Cone}(S)$ is contained in $\RR^d\times\RR_{\ge 0}$,  so the ridge of of $S$ must be contained in $\partial M(S)$). In particular, a subsemigroup of a strongly negative semigroup is itself strongly negative.

We now introduce some notation from \cite{KK}. Let 
\vskip .1truein

$S_k=S\cap (\RR^d\times\{k\})$.

$\Delta(S)=\mbox{Con}(S)\cap (\RR^{d}\times\{1\})$ (the Newton-Okounkov body of $S$).

$q(S)=\dim \partial M(S)$.

$G(S)$ be the subgroup of $\ZZ^{d+1}$ generated by $S$.

$m(S)=[\ZZ:\pi(G(S))]$
  where $\pi:\RR^{d+1}\rightarrow \RR$ be projection onto the last factor.

$\mbox{ind}(S)= [\partial M(S)_{\ZZ}:G(S)\cap \partial M(S)_{\ZZ}]$
where 

$\partial M(S)_{\ZZ}:=\partial M(S)\cap \ZZ^{d+1}= M(S)\cap (\ZZ^d\times\{0\})$.

${\rm vol}_{q(S)}(\Delta(S))$ is the integral volume of $\Delta(S)$. This volume is computed using the translation of the integral measure on $\partial M(S)$.
\vskip .2truein

$S$ is strongly negative if and only if $\Delta(S)$ is a compact set. If $S$ is strongly negative, then the dimension of $\Delta(S)$ is $q(S)$.
\vskip .1truein

\begin{Theorem}\label{ConeTheorem3}(Kaveh and Khovanskii) Suppose that $S$ is strongly nonnegative.  Then 
$$
\lim_{k\rightarrow \infty}\frac{\#S_{m(S)k}}{k^{q(S)}}=\frac{{\rm vol}_{q(S)}(\Delta(S))}{{\rm ind}(S)}.
$$
\end{Theorem}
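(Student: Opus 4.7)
The approach generalizes the Okounkov/Lazarsfeld--Musta\c{t}\u{a} counting argument recorded as Theorem \ref{Theorem3}: I would reduce the count $\#S_{m(S)k}$ to a lattice-point count in slices of the convex cone $\mathrm{Cone}(S)$, estimate the discrepancy between $S$ and $G(S)\cap\mathrm{Cone}(S)$ via a Khovanskii-type approximation, and then apply a standard volume asymptotic for lattice points in convex bodies.

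First I would fix the lattice structure on the layers. Since $\pi(G(S))=m(S)\ZZ$, the slice $S_k$ is empty unless $m(S)\mid k$, which is why the natural progression is $k\mapsto m(S)k$. The intersection of $G(S)$ with the affine hyperplane $\RR^d\times\{m(S)k\}$ is a translate of the sublattice $\Lambda:=G(S)\cap\partial M(S)_{\ZZ}$ of $\partial M(S)_{\ZZ}$; by the definition of $\mathrm{ind}(S)$ this sublattice has index $\mathrm{ind}(S)$, so its covolume inside $\partial M(S)$ with respect to the integer measure equals $\mathrm{ind}(S)$.

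The core input is a Khovanskii-type approximation for strongly nonnegative semigroups: for every compact $\Omega$ properly contained in the relative interior of $\Delta(S)$ there exists $k_0$ such that every lattice point of $G(S)$ in $k\cdot\Omega$ with $k\ge k_0$ lies in $S$. For finitely generated $S$ this is the classical Khovanskii lemma, proved by triangulating $\mathrm{Cone}(S)$ into simplicial subcones spanned by chosen generators and verifying that each translated fundamental parallelepiped is eventually contained in $S$. For a general strongly nonnegative $S$ I would exhaust $S$ from within by an increasing sequence of finitely generated subsemigroups whose Newton--Okounkov bodies approach $\Delta(S)$; here the compactness of $\Delta(S)$, which is equivalent to strong nonnegativity, is used to make the uncovered region near $\partial\Delta(S)$ uniformly thin.

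Granting this approximation, the conclusion follows from a two-sided squeeze. For the upper bound, $S_{m(S)k}\subset G(S)\cap\mathrm{Cone}(S)\cap(\RR^d\times\{m(S)k\})$, and Minkowski/Ehrhart asymptotics for the lattice $\Lambda$ inside a dilate of $\Delta(S)$ give a leading term of order $k^{q(S)}$; the chosen normalization of $\mathrm{vol}_{q(S)}$ coming from the integer measure on $\partial M(S)$ is precisely the one that turns this leading constant into $\mathrm{vol}_{q(S)}(\Delta(S))/\mathrm{ind}(S)$, with an error of order $k^{q(S)-1}$ from the codimension-one boundary. For the matching lower bound, applying the approximation to an exhaustion $\Omega_n\nearrow\Delta(S)$ yields $\#S_{m(S)k}\ge\mathrm{vol}_{q(S)}(\Omega_n)\,k^{q(S)}/\mathrm{ind}(S)-O(k^{q(S)-1})$ for each $n$, and letting $n\to\infty$ closes the gap. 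The main obstacle is the Khovanskii approximation in the infinitely generated regime: the classical argument exploits finiteness of a generating set to exhibit the filled-in inner cone, and removing this hypothesis requires the delicate inner approximation of $S$ by finitely generated subsemigroups whose groups and cones exhaust $G(S)$ and $\mathrm{Cone}(S)$ respectively, as developed in \cite{KK}.
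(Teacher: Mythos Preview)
The paper does not actually prove this theorem: immediately after the statement it simply records ``This is proven in Corollary 1.16 \cite{KK}'' and moves on. So there is no argument in the paper to compare your proposal against; the result is imported wholesale from Kaveh and Khovanskii.

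That said, your sketch is a faithful outline of the Kaveh--Khovanskii strategy: reduce to the lattice $G(S)$ on each level (which accounts for $m(S)$ and $\mathrm{ind}(S)$), use the Khovanskii approximation lemma to show that $S$ eventually fills every interior compact of $\mbox{Con}(S)$ at the level of $G(S)$, and then squeeze via lattice-point asymptotics in dilates of $\Delta(S)$. The one place where your description is slightly loose is the infinitely generated case: in \cite{KK} the approximation is obtained not by exhausting $S$ by finitely generated subsemigroups with the \emph{same} group, but by first passing to the regularization (the semigroup of $G(S)$-points in $\mbox{Con}(S)$) and showing that $S$ and its regularization agree on any strictly smaller cone for large levels. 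Your formulation would need to ensure the approximating subsemigroups generate all of $G(S)$, which is an extra bookkeeping step; the \cite{KK} route sidesteps this. Either way, since the paper only cites the result, your proposal goes well beyond what the paper itself supplies.
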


This is proven in  Corollary 1.16 \cite{KK}. 

With our assumptions, we have that $S_n=\emptyset$ if $m(S)\not\,\mid n$ and  the limit is positive, since
${\rm vol}_{q(S)}(\Delta(S))>0$.

\begin{Theorem}\label{ConeTheorem4} Suppose that $q$ is a positive integer such there exists a sequence $k_i\rightarrow \infty$ of positive integers such that  the sequence $\#S_{m(S)k_i}/k_i^q$ is bounded. Then $S$ is strongly nonnegative with $q(S)\le q$.
\end{Theorem}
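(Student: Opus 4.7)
The plan is to prove the contrapositive: assuming that either $S$ fails to be strongly nonnegative or that $q(S)>q$, I show $\#S_{m(S)k}/k^q\to\infty$ as $k\to\infty$, which rules out any bounded subsequence.

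First suppose $S$ is strongly nonnegative with $q(S)>q$. Theorem \ref{ConeTheorem3} applies directly: the limit $\lim_k\#S_{m(S)k}/k^{q(S)}={\rm vol}_{q(S)}(\Delta(S))/{\rm ind}(S)$ exists and is strictly positive, so
\[
\frac{\#S_{m(S)k}}{k^q}=k^{q(S)-q}\cdot\frac{\#S_{m(S)k}}{k^{q(S)}}\longrightarrow\infty
\]
since the integer exponent $q(S)-q$ is at least $1$. This contradicts the assumed bound along $k_i$.

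Now suppose $S$ is not strongly nonnegative, so that $\Delta(S)$ is a noncompact closed convex subset of $L(S)\cap(\RR^d\times\{1\})$. Observe first that a nonzero element $s\in S$ with $\pi(s)=0$ would make $\#S_{m(S)k}$ infinite for all $k\gg 0$, contradicting the hypothesis; hence every nonzero $s\in S$ has $\pi(s)\ge 1$. The plan is now to exhaust $S$ from within by finitely generated strongly nonnegative subsemigroups $T^{(C)}\subset S$ whose Newton--Okounkov bodies have arbitrarily large $q(S)$-dimensional volume, and then invoke Theorem \ref{ConeTheorem3} on each. Concretely, fix once and for all a finitely generated subsemigroup $S_0\subset S$ with $G(S_0)=G(S)$ and containing an element of $\pi$-value $m(S)$, so that $L(S_0)=L(S)$ and $m(S_0)=m(S)$. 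For each $C>0$, choose finitely many rational points $v_1,\ldots,v_r$ in the relative interior of $\Delta(S)$ whose convex hull has $q(S)$-dimensional volume exceeding $C$; this is possible by noncompactness. Each $v_j$ lies in the relative interior of $\mbox{Cone}(S)$ and therefore is a positive rational combination of finitely many elements of $S$, so $N_jv_j\in S$ for a suitable positive integer $N_j$. Let $T^{(C)}$ be the subsemigroup of $S$ generated by $S_0\cup\{N_jv_j:1\le j\le r\}$. Then $T^{(C)}$ is finitely generated, strongly nonnegative (all generators have positive $\pi$-value), and satisfies $L(T^{(C)})=L(S)$, $m(T^{(C)})=m(S)$, ${\rm ind}(T^{(C)})\le{\rm ind}(S_0)$, and $\Delta(T^{(C)})\supset\mathrm{conv}(v_1,\ldots,v_r)$. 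By Theorem \ref{ConeTheorem3},
\[
\lim_{k\to\infty}\frac{\#T^{(C)}_{m(S)k}}{k^{q(S)}}=\frac{{\rm vol}_{q(S)}(\Delta(T^{(C)}))}{{\rm ind}(T^{(C)})}\ge\frac{C}{{\rm ind}(S_0)};
\]
since $T^{(C)}\subset S$, this yields $\liminf_k\#S_{m(S)k}/k^{q(S)}\ge C/{\rm ind}(S_0)$. As $C$ is arbitrary, $\#S_{m(S)k}/k^{q(S)}\to\infty$, and in particular $\#S_{m(S)k}/k^q\to\infty$ whenever $q\le q(S)$, giving the desired contradiction.

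The main obstacle is the remaining subcase $q>q(S)$ with $S$ not strongly nonnegative; here $q(S)\le q$ holds automatically, but one must still contradict the failure of strong nonnegativity by forcing $\#S_{m(S)k}/k^q\to\infty$. The key additional input is that noncompactness of $\Delta(S)$ forces super-polynomial growth of $\#S_{m(S)k}$, rather than $\sim k^{q(S)}$ growth. I expect this to follow from the observation that since every nonzero element of $S$ has $\pi\ge 1$, any finitely generated subsemigroup of $S$ is automatically strongly nonnegative, so the failure of strong nonnegativity for $S$ itself forces $S$ to be non-finitely generated; the recession direction in $\mbox{Cone}(S)\cap\partial M(S)$ can only be approached by elements $s_n\in S$ whose norms tend to infinity while $\pi(s_n)$ stays moderate, producing unboundedly many generators with distinct $\pi$-values, and the number of ways to write an element of $S_{m(S)k}$ as a sum of such generators grows super-polynomially in $k$ (as in the archetypal example generated by $\{(n^2,n):n\ge 1\}$, where $\#S_k$ equals the partition number $p(k)\sim\exp(C\sqrt{k})$). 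Making this quantitative then forces $\#S_{m(S)k}/k^q\to\infty$ for every $q$, completing the contradiction.
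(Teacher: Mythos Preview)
The paper does not prove this statement; after stating it, it simply records ``This is proven in Theorem 1.18 \cite{KK}.'' Your first two cases are handled correctly: the exhaustion by finitely generated subsemigroups $T^{(C)}$ in the second case is a clean way to push ${\rm vol}_{q(S)}(\Delta)$ to infinity when $\Delta(S)$ is noncompact and $q\le q(S)$.

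The third case, which you yourself flag as the main obstacle, is a genuine gap, and your heuristic for closing it is wrong. In your model semigroup $S\subset\ZZ\times\NN$ generated by $\{(n^2,n):n\ge 1\}$, the number $\#S_k$ is \emph{not} the partition number $p(k)$: it counts \emph{distinct} points $(a,k)\in S$, i.e., distinct values $a=\sum_i n_i^2$ subject to $\sum_i n_i=k$, and since every such $a$ lies in $[k,k^2]$ one has $\#S_k\le k^2-k+1$. You have conflated the number of representations (which is $p(k)$) with the cardinality of the level set, so the promised super-polynomial growth never materializes. Worse, this very example satisfies the hypothesis with $q=2$ (since $\#S_k/k^2\le 1$), has $q(S)=1\le 2$, and yet is not strongly nonnegative (its closed cone $\{(x,y):0\le y\le x\}$ meets $\RR\times\{0\}$ in a half-line). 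This strongly suggests the statement as transcribed here omits a hypothesis present in \cite{KK}; note that in the paper's only application (Proposition~\ref{PropF1}) the semigroups already sit inside $\{n_1+\cdots+n_d\le\beta k,\ i_1+\cdots+i_n=k\}$, so strong nonnegativity is automatic there and only the bound $q(\Gamma^{(t)})\le q$ is actually needed. You should consult Theorem~1.18 of \cite{KK} directly for the exact hypotheses before attempting to complete the argument.
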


This is proven in Theorem 1.18 \cite{KK}.

\subsection{Limits for graded algebras over a local domain}

\begin{Theorem}\label{TheoremA} Suppose that $R$ is an analytically irreducible local domain, 
$$
B=R[x_1,\ldots,x_n]=\bigoplus_{k\ge 0}B^k
$$
is a standard graded polynomial ring over $R$ and $A=\bigoplus_{k\ge 0}A^k$ is a graded $R$-subalgebra of $B$. Suppose that $A^1\ne 0$ and that $q\in \ZZ_{>0}$ is such that for all $c\in \ZZ_{>0}$, there exists $\gamma_c\in\RR_{>0}$ such that
\begin{equation}\label{eq1}
\ell_R(A^k/(m_R^{ck}B)\cap A^k)<\gamma_ck^q
\end{equation}
for all $k\ge 0$. Then for any fixed positive integer $c$,
$$
\lim_{k\rightarrow\infty}\frac{\ell_R(A^k/(m_R^{ck}B)\cap A^k)}{k^q}
$$
exists.
\end{Theorem}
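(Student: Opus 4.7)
The plan is to extend the Okounkov-body construction of Section~\ref{Section5} from $R$ to the polynomial ring $B$, and then to feed the boundedness hypothesis into the Kaveh--Khovanskii limit theorems (Theorems~\ref{ConeTheorem3} and~\ref{ConeTheorem4}). First I would set up the valuation. Exactly as in Section~\ref{Section5}, choose a regular $d$-dimensional local ring $S = \mathcal O_{X,p}$ dominating $R$, with regular parameters $y_1, \ldots, y_d$, residue field $k' = S/m_S$ finite over $k = R/m_R$, and rationally independent reals $\lambda_1, \ldots, \lambda_d \geq 1$ defining the rank-one valuation $\nu$ on $Q(R)$. Then I extend $\nu$ to a valuation $\tilde\nu$ on $Q(B)$ by picking $\mu_1, \ldots, \mu_n \geq 1$ in $\RR$ so that $\lambda_1,\ldots,\lambda_d,\mu_1,\ldots,\mu_n$ are $\QQ$-linearly independent, and setting
$$\tilde\nu\Bigl(\sum_{\alpha} a_{\alpha} x^{\alpha}\Bigr) = \min_{a_{\alpha} \neq 0}\Bigl(\nu(a_{\alpha}) + \sum_{j} \alpha_j \mu_j\Bigr).$$
Rational independence makes $\tilde\nu$ a valuation with the same residue field $k'$, and for every nonzero $f \in B$ the value $\tilde\nu(f)$ has a unique representation $\sum n_i \lambda_i + \sum \alpha_j \mu_j$ with $(\mathbf n, \boldsymbol\alpha) \in \NN^{d+n}$. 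Let $\tilde K_{\tilde\lambda}, \tilde K_{\tilde\lambda}^+$ denote the resulting valuation ideals in $Q(B)$.

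Next I introduce the graded $R$-subalgebra $\tilde A = \bigoplus_k \tilde A^k$ of $A$ with $\tilde A^k := m_R^{ck}B \cap A^k$; it is indeed an $R$-subalgebra, and $\tilde A^1 \neq 0$ (take $gf$ with $0\neq f\in A^1$ and $0\neq g\in m_R^c$, using that $B$ is a domain). Combining Lemma~\ref{Prop1} with the trivial bound $\sum_j \alpha_j \mu_j \leq (\max_j \mu_j)k$ on $B^k$ produces a positive integer $\tilde\beta$ with $\tilde K_{\tilde\beta k}\cap A^k \subseteq \tilde A^k$ for every $k$; enlarging $\tilde\beta$ if needed, I may also assume that $\tilde\beta$ dominates $\sum_i(n_f)_i+\sum_j(\alpha_f)_j$ for fixed nonzero elements $f\in A^1$ and $f\in \tilde A^1$. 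For $M \in \{A,\tilde A\}$ and $1\leq t\leq [k':k]$, define
$$\Gamma^{(t)}(M) = \Bigl\{(\mathbf n, \boldsymbol\alpha, k) \in \NN^{d+n+1} \,:\, \dim_k \tfrac{M^k\cap \tilde K_{\tilde\lambda}}{M^k\cap \tilde K_{\tilde\lambda}^+} \geq t,\ \sum_i n_i + \sum_j \alpha_j \leq \tilde\beta k\Bigr\},$$
where $\tilde\lambda = \sum n_i \lambda_i + \sum \alpha_j \mu_j$; this is a sub-semigroup of $\NN^{d+n+1}$ by the analog of Lemma~\ref{LemmaM10}. Filtering $A^k$ and $\tilde A^k$ by $\tilde K_{\tilde\lambda}\cap(-)$ and noting that $\tilde K_{\tilde\lambda}\cap A^k = \tilde K_{\tilde\lambda}\cap \tilde A^k$ whenever $\tilde\lambda \geq \tilde\beta k$ (so those contributions cancel between $A$ and $\tilde A$), I obtain, in the spirit of~(\ref{eqM11}), the identity
$$\ell_R(A^k/\tilde A^k) = \sum_{t=1}^{[k':k]}\bigl(\#\Gamma^{(t)}(A)_k - \#\Gamma^{(t)}(\tilde A)_k\bigr).$$

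Finally, since $\nu_0 := \min\{\nu(g) : 0 \neq g \in m_R\}$ is positive, I can choose an integer $c'$ with $m_R^{c'k}B \cap A^k \subseteq \tilde K_{\tilde\beta k}\cap A^k$ for all $k$; then the hypothesis~(\ref{eq1}) applied to $c'$ yields the uniform bound $\#\Gamma^{(t)}(M)_k \leq \ell_R(A^k/(m_R^{c'k}B \cap A^k)) < \gamma_{c'} k^q$ for every $t,k,M$, so Theorem~\ref{ConeTheorem4} implies that each $\Gamma^{(t)}(M)$ is strongly nonnegative with $q(\Gamma^{(t)}(M))\leq q$. The hypotheses $A^1\neq 0$ and $\tilde A^1\neq 0$, together with the multiplication shift of Lemma~\ref{LemmaM10} and the enlarged choice of $\tilde\beta$, guarantee that any nonempty $\Gamma^{(t)}(M)$ meets two consecutive $k$-levels, whence $m(\Gamma^{(t)}(M)) = 1$. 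Theorem~\ref{ConeTheorem3} then gives $\lim_{k\to\infty} \#\Gamma^{(t)}(M)_k/k^q$ (equal to $\mathrm{vol}_q(\Delta(\Gamma^{(t)}(M)))/\mathrm{ind}(\Gamma^{(t)}(M))$ when $q(\Gamma^{(t)}(M))=q$ and to $0$ otherwise), and summation over $t$ produces the desired limit. The main obstacle is the quantitative two-sided comparison between the $m_R$-adic filtration on $B$ and the $\tilde\nu$-filtration --- sharp enough to yield $\tilde\beta$ and $c'$ with the required containments while keeping the semigroups strongly nonnegative; this ultimately rests on Lemma~\ref{Prop1}, i.e.\ on the analytic irreducibility of $R$.
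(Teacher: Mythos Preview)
Your argument is correct and follows the paper's strategy: extend the valuation from $Q(R)$ to the polynomial ring, build the level-$t$ semigroups, feed the growth hypothesis~(\ref{eq1}) into Theorem~\ref{ConeTheorem4} to obtain strong nonnegativity with $q(\Gamma^{(t)})\le q$, and then apply Theorem~\ref{ConeTheorem3}. The one genuine technical difference is the extension itself: you keep a rank-one valuation $\tilde\nu$ by adjoining further rationally independent real weights $\mu_j$ for the $x_j$, whereas the paper uses the lexicographic extension $\omega$ with value group $(\Gamma_\nu\times\ZZ^n)_{\rm lex}$. Both produce residue field $k'$ and the same semigroup machinery. The lex choice buys a cleaner comparison with the $m_R$-adic filtration, since $K(\omega)_{k(\beta,0,\ldots,0)}\cap B\subset m_R^{ck}B$ follows directly from Lemma~\ref{Prop1} with no contribution from the $x_j$; thus the paper needs only the single constant $\beta$, the counting identity~(\ref{eqF51}) is exact, and no cancellation argument is required. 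Your rank-one choice is closer to the original Okounkov setup and works just as well, at the cost of the extra constant $c'$ and the cancellation step you describe.

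Two points to tighten. First, the paper begins by passing to $\hat R$ via flatness, which is what guarantees excellence and hence finite type of the normalized blow-up; your phrase ``exactly as in Section~\ref{Section5}'' should be read as including this reduction, but say so. Second, your bound $\#\Gamma^{(t)}(M)_k\le \ell_R(A^k/(m_R^{c'k}B\cap A^k))$ is not quite as stated: the simplex constraint $\sum n_i+\sum\alpha_j\le\tilde\beta k$ only forces $\tilde\lambda\le L\tilde\beta k$ with $L=\max_i\lambda_i\vee\max_j\mu_j$, so you need $c'$ large enough that $m_R^{c'k}B\cap A^k\subset \tilde K_{L\tilde\beta k}\cap A^k$, not merely $\tilde K_{\tilde\beta k}\cap A^k$. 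This is harmless (take $c'\nu_0>L\tilde\beta$), but the inequality chain should reflect it.
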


Let $c>0$ be a fixed positive integer.
We first reduce to the case that $R$ is complete. We can do this since $\hat R$ is a flat $R$-module. To begin with,
$$
\hat A:=A\otimes_R\hat R\cong \bigoplus_{k\ge 0}\hat A^k\subset \hat B:=B\otimes_R\hat R\cong \hat R[x_1,\ldots,x_n].
$$
 Tensoring the exact sequence
$$
0\rightarrow (m_R^{ck}B)\cap A^k\rightarrow A^k\rightarrow A^k/(m_R^{ck}B)\cap A^k\rightarrow 0
$$
with $\hat R$ over $R$, and using the fact that $m_R^{ck}A^k\subset (m_R^{ck}B)\cap A^k$, we have exact sequences
$$
0\rightarrow (m_{\hat R}^{ck}\hat B)\cap \hat A^k\cong ((m_R^{ck}B)\cap A^k)\otimes_R\hat R
\rightarrow \hat A^k\rightarrow (A^k/(m_R^{ck}B)\cap A^k)\otimes_R\hat R\cong A^k/(m_R^{ck}B)\cap A^k\rightarrow 0
$$
so that
$$
\ell_{\hat R}(\hat A^k/(m_{\hat R}^{ck}\hat B)\cap \hat A^k)=\ell_R(A^k/(m_R^{ck}B)\cap A^k)
$$
for all $c,k$.

For the duration of the proof we will assume that $R$ is complete.
Let $X$ be the normalization of the blow up $\pi:X\rightarrow \mbox{Spec}(R)$ of  the maximal ideal $m_R$ of $R$. $X$ is of finite type over $R$ since $R$ is excellent
(as it is complete). As $X$ is normal, it is regular in codimension 1, so
there exists a closed point $p\in \pi^{-1}(m_R)$ such that $S=\mathcal O_{X,p}$ is regular. $S$ necessarily  dominates $R$. $S$ is essentially of finite type over $R$ and has the same quotient field $Q(R)$. Let $\ell=[S/m_S:R/m_R]<\infty$.

We first define a valuation $\nu$ dominating $S$ by the method of the proof of Theorem \ref{Theorem1}.
Let $y_1,\ldots,y_d$ be regular parameters in $S$, and let $\lambda_1,\ldots,\lambda_d$ with $\lambda_i\ge 1$ be rationally independent real numbers. Define a valuation $\nu$ on $Q(R)$ by
$\nu(y_i)=\lambda_i$ for $1\le i\le d$ and $\nu(\gamma)=0$ if $\gamma$ is a unit in $S$. The value group $\Gamma_{\nu}$ of $\nu$ is
the ordered subgroup $\Gamma_{\nu}=\ZZ\lambda_1+\cdots+\ZZ\lambda_d$ of $\RR$, which is isomorphic to $\ZZ^d$ as an unordered group.
Let $V_{\nu}$ be the valuation ring of $\nu$. The residue field of $V_{\nu}$ is $V_{\nu}/m_{\nu}\cong S/m_S$.

We now extend $\nu$ to a valuation $\omega$ on the rational function field $Q(R)(x_1,\ldots,x_n)$ with value group
$\Gamma_{\omega}=(\Gamma_{\nu}\times \ZZ^n)_{\rm lex}$, by defining
$$
\omega(g)=\min\{(\nu(a_{i_1,\ldots,i_n}),i_1,\ldots,i_n)\mid a_{i_1,\ldots,i_n}\ne 0\}
$$
for  $g=\sum a_{i_1,\ldots,i_n}x_1^{i_1}\cdots x_n^{i_n}\in Q(R)[x_1,\ldots,x_n]$ with $a_{i_1,\ldots,i_n}\in Q(R)$.
We have that $V_{\omega}/m_{\omega}\cong V_{\nu}/m_{\nu}\cong S/m_S$.

Define valuation ideals $K(\nu)_{\lambda}$ and $K(\nu)_{\lambda}^+$ in $V_{\nu}$ for $\lambda\in \Gamma_{\nu}$ and 
$K(\omega)_{\tau}$ and $K(\omega)_{\tau}^+$ in $V_{\omega}$ for $\tau\in \Gamma_{\omega}$ to be the respective sets of elements of
$\nu$-value $\ge \lambda$, $\nu$-value $> \lambda$, $\omega$-value $\ge \tau$ and $\omega$-value $> \tau$. We have, as in Lemma \ref{Prop1},

\begin{Lemma}\label{LemmaF1}(Lemma 4.3 \cite{C4}) There exists $\beta\in \ZZ_{>0}$ such that $K(\nu)_{\beta k}\cap R\subset m_R^{ck}$
for all $k\in \NN$.
\end{Lemma}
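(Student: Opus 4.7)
The plan is to deduce this lemma immediately from Lemma \ref{Prop1} by rescaling. Recall that Lemma \ref{Prop1}, quoted earlier in the paper, asserts the existence of a positive integer $\alpha$ such that $K(\nu)_{\alpha n}\cap R\subset m_R^n$ for all $n\in\ZZ_+$. The valuation ideals $K_\lambda$ used there are defined in exactly the same way as $K(\nu)_\lambda$ used here (the restriction of $\omega$ to $Q(R)$ is $\nu$, so no new content arises from passing to $\omega$ in this statement). The hypothesis of Lemma \ref{Prop1} is that $R$ is analytically irreducible, which holds here by the assumption of Theorem \ref{TheoremA} and by the reduction to the complete case made at the start of its proof.

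First I would apply Lemma \ref{Prop1} to obtain $\alpha\in\ZZ_{>0}$ with $K(\nu)_{\alpha n}\cap R\subset m_R^n$ for every $n\in\NN$. Then, setting $\beta=c\alpha$ (where $c$ is the fixed positive integer from Theorem \ref{TheoremA}), we have for every $k\in\NN$
$$
K(\nu)_{\beta k}\cap R \;=\; K(\nu)_{\alpha(ck)}\cap R \;\subset\; m_R^{ck},
$$
which is the required inclusion. This is the whole argument.

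The only nontrivial content is therefore the statement of Lemma \ref{Prop1} itself, whose proof, as the paper remarks, rests on Huebl's linear Zariski subspace theorem \cite{Hu} or equivalently on Rees' version of Izumi's theorem \cite{Re}. The substantive input is that, because $R$ is analytically irreducible, the topology on $R$ induced by the divisorial-like valuation $\nu$ (which dominates $R$ through the regular local ring $S=\mathcal O_{X,p}$) is linearly equivalent to the $m_R$-adic topology; this linear comparability is precisely what produces the constant $\alpha$, and hence $\beta=c\alpha$. Analytic irreducibility cannot be removed here: without it, the Izumi-type linear comparison can fail, and $K(\nu)_{\beta k}\cap R$ need not be contained in any fixed power $m_R^{ck}$.
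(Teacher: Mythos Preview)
Your proposal is correct and matches the paper's own treatment: the paper states Lemma~\ref{LemmaF1} as a restatement of Lemma~\ref{Prop1} (both are Lemma~4.3 of \cite{C4}), and in Section~\ref{Section5} the paper carries out exactly your rescaling, setting $\beta=\alpha c$ immediately after Lemma~\ref{Prop1} to obtain $K_{\beta n}\cap R\subset m_R^{cn}$ (equation~(\ref{eqF30})). There is nothing further to add.
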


We conclude that
$$
K(\omega)_{k(\beta,0,\ldots,0)}\cap B\subset m_R^{kc}B
$$
for all $k$, so that
$$
K(\omega)_{k(\beta,0,\ldots,0)}\cap A^k=  K(\omega)_{k(\beta,0,\ldots,0)}\cap ((m_R^{kc}B)\cap A^k) \subset (m_R^{kc}B)\cap A^k
$$
for all $k$.

Let $\overline A^k=(m_R^{kc}B)\cap A^k$. We have that
\begin{equation}\label{eqG60}
\ell_R(A^k/(m_R^{ck}B)\cap A^k)
=\ell_R(A^k/K(\omega)_{k(\beta,0,\ldots,0)}\cap A^k)
-\ell_R(\overline A^k/K(\omega)_{k(\beta,0,\ldots,0)}\cap \overline A^k)
\end{equation}
for all $k$.

For $t\ge 1$ define

$$
\Gamma^{(t)} = \left\{\begin{array}{lll}
(n_1,\ldots,n_d,i_1,\ldots,i_n,k)\in \NN^{d+n+1}&\mid&
\dim_{R/m_R}\frac{A^k\cap K(\omega)_{(n_1\lambda_1+\cdots+n_d\lambda_d,i_1,\ldots,i_n)}}
{A^k\cap K(\omega)^+_{(n_1\lambda_1+\cdots+n_d\lambda_d,i_1,\ldots,i_n)}}\ge t\\
&&\mbox{ and }n_1+\cdots+n_d\le \beta k
\end{array}\right\}
$$
and 
$$
\overline \Gamma^{(t)} = \left\{\begin{array}{lll}
(n_1,\ldots,n_d,i_1,\ldots,i_n,k)\in \NN^{d+n+1}&\mid&
\dim_{R/m_R}\frac{\overline A^k\cap K(\omega)_{(n_1\lambda_1+\cdots+n_d\lambda_d,i_1,\ldots,i_n)}}
{\overline A^k\cap K(\omega)^+_{(n_1\lambda_1+\cdots+n_d\lambda_d,i_1,\ldots,i_n)}}\ge t\\
&&\mbox{ and }n_1+\cdots+n_d\le \beta k
\end{array}\right\}.
$$
For all $k$ and $\tau$ we have natural $R/m_R$-vector space inclusions
$$
A^k\cap K(\omega)_{\tau}/A^k\cap K(\omega)^+_{\tau}\rightarrow V_{\omega}/m_{\omega}
$$
and
$$
\overline A^k\cap K(\omega)_{\tau}/\overline A^k\cap K(\omega)^+_{\tau}\rightarrow V_{\omega}/m_{\omega}
$$
so $\Gamma^{(t)}=\emptyset$ for $t>\ell$ and $\overline \Gamma^{(t)}=\emptyset$ for $t>\ell$.
We have that
$$
\ell_R(K(\omega)_{\lambda}\cap A^k/K(\omega)^+_{\lambda}\cap A^k)
=\#\{t\mid (n_1,\ldots,n_d,i_1,\ldots,i_n,k)\in \Gamma^{(t)}\}
$$
for $\lambda=(n_1\lambda_1+\cdots+n_d\lambda_d,i_1,\ldots,i_n)$ such that $n_1+\cdots+n_d\le\beta k$, and the corresponding statement for $\overline \Gamma^{(t)}$ also holds.  We have that 
$$
\lambda=n_1\lambda_1+\cdots+n_d\lambda_d<k(\beta,0,\ldots,0)
$$
(in the lex order) if and only if $n_1\lambda_1+\cdots+n_d\lambda_d<k\beta$.
Since $\lambda_i\ge 1$ for all $i$, this implies $n_1+\cdots+n_d\le\beta k$. Thus
\begin{equation}\label{eqF51}
\ell_R(A^k/ K(\omega)_{k(\beta,0,\ldots,0)}\cap A^k)
=\sum_{0\le \lambda<k(\beta,0,\ldots,0)}(\ell_{R/m_R} K(\omega)_{\lambda}\cap A^k/K(\omega)^+_{\lambda}\cap A^k)
=\sum_{t=1}^{\ell}\#\Gamma^{(t)}_k
\end{equation}
and
\begin{equation}\label{eqF52}
\ell_R(\overline A^k/ K(\omega)_{k(\beta,0,\ldots,0)}\cap \overline A^k)
=\sum_{0\le \lambda<k(\beta,0,\ldots,0)}(\ell_{R/m_R} K(\omega)_{\lambda}\cap \overline A^k/K(\omega)^+_{\lambda}\cap \overline A^k)
=\sum_{t=1}^{\ell}\#\overline \Gamma^{(t)}_k.
\end{equation}

The proof of the following Lemma \ref{Lemma2} is similar to the proof of Lemma \ref{LemmaM10}.

\begin{Lemma}\label{Lemma2} 
Suppose that $t\ge 1$, $0\ne f\in A^i$, $0\ne g\in A^j$ and 
$$
\ell_{R/m_R} \left(A^i\cap K(\omega)_{\omega(f)}/A^i\cap K(\omega)^+_{\omega(f)}\right)\ge t.
$$
Then 
$$
\ell_{R/m_R} \left(A^{i+j}\cap K(\omega)_{\omega(fg)}/A^{i+j}\cap K(\omega)^+_{\omega(fg)^+}\right)\ge t.
$$
Suppose that $t\ge 1$, $0\ne f\in \overline A^i$, $0\ne g\in \overline A^j$ and 
$$
\ell_{R/m_R} \left(\overline A^i\cap K(\omega)_{\omega(f)}/\overline A^i\cap K(\omega)^+_{\omega(f)}\right)\ge t.
$$
Then 
$$
\ell_{R/m_R} \left(\overline A^{i+j}\cap K(\omega)_{\omega(fg)}/\overline A^{i+j}\cap K(\omega)^+_{\omega(fg)^+}\right)\ge t.
$$
\end{Lemma}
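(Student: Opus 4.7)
My plan is to prove this in direct analogy with Lemma \ref{LemmaM10}, exploiting that $\omega$ is an honest valuation on the field $Q(R)(x_1,\ldots,x_n)$, so $\omega(ab)=\omega(a)+\omega(b)$ for any nonzero $a,b$. First I would observe that each quotient $A^k\cap K(\omega)_\tau/A^k\cap K(\omega)^+_\tau$ is naturally an $R/m_R$-vector space: since $\nu$ dominates $S$ and $S$ dominates $R$, any $c\in m_R$ satisfies $\omega(c)=(\nu(c),0,\ldots,0)>0$ in the lex order, so multiplication by $c$ strictly raises the $\omega$-value and annihilates the quotient. The same holds with $\overline A$ in place of $A$.

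Next, choose elements $f_1,\ldots,f_t\in A^i\cap K(\omega)_{\omega(f)}$ whose classes $\bar f_1,\ldots,\bar f_t$ in $A^i\cap K(\omega)_{\omega(f)}/A^i\cap K(\omega)^+_{\omega(f)}$ are $R/m_R$-linearly independent. I claim the products $f_1g,\ldots,f_tg\in A^{i+j}$ give $R/m_R$-linearly independent classes in $A^{i+j}\cap K(\omega)_{\omega(fg)}/A^{i+j}\cap K(\omega)^+_{\omega(fg)}$. They lie in $K(\omega)_{\omega(fg)}$ because $\omega(f_kg)=\omega(f_k)+\omega(g)\ge\omega(f)+\omega(g)=\omega(fg)$, and they lie in $A^{i+j}$ because $A$ is a graded subalgebra of $B$. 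For independence, suppose for a contradiction that $\sum_k c_k f_k g\in K(\omega)^+_{\omega(fg)}$ for some $c_k\in R$ not all in $m_R$. Then $\omega\bigl((\sum_k c_k f_k)g\bigr)>\omega(fg)$, and the valuation identity $\omega\bigl((\sum_k c_k f_k)g\bigr)=\omega(\sum_k c_k f_k)+\omega(g)$ together with $\omega(fg)=\omega(f)+\omega(g)$ yield $\omega(\sum_k c_k f_k)>\omega(f)$. Hence $\sum_k c_k f_k\in A^i\cap K(\omega)^+_{\omega(f)}$, which produces the nontrivial relation $\sum_k\bar c_k\bar f_k=0$ in the original quotient, contradicting the choice of the $f_k$.

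The $\overline A$ statement follows by the identical argument, once one notes that $\overline A^i\cdot\overline A^j\subset\overline A^{i+j}$; this is immediate from $(m_R^{ci}B)(m_R^{cj}B)\subset m_R^{c(i+j)}B$ combined with $A^iA^j\subset A^{i+j}$, so $f_kg\in\overline A^{i+j}$ whenever $f_k\in\overline A^i$ and $g\in\overline A^j$. I do not anticipate a serious obstacle: the proof is essentially a transcription of Lemma \ref{LemmaM10} to the refined valuation $\omega$ on $Q(R)(x_1,\ldots,x_n)$ and to the graded algebra $A$ in place of a graded family of ideals. The only mildly delicate point is establishing the $R/m_R$-vector space structure noted at the outset, which is what makes the notion of $R/m_R$-linear independence of the classes meaningful, and which is in turn a direct consequence of the fact that $\nu$ (and hence $\omega$) dominates $R$.
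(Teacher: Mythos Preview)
Your proof is correct and is precisely the argument the paper has in mind: the paper does not write out a proof of this lemma, stating only that it ``is similar to the proof of Lemma \ref{LemmaM10}'', and your write-up is exactly that transcription to the valuation $\omega$ and the graded algebras $A$ and $\overline A$. The only minor remark is that in the independence step you should note (as you implicitly do) that the case $\sum_k c_kf_k=0$ is harmless, since then $\sum_k c_kf_k\in K(\omega)^+_{\omega(f)}$ trivially and the contradiction with the independence of the $\bar f_k$ follows at once.
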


\begin{Proposition}\label{PropF1} Suppose that $\Gamma^{(t)}\not\subset \{0\}$.
Then 
\begin{enumerate}
\item[1)] $\Gamma^{(t)}$ is a subsemigroup of $\NN^{d+n+1}$.
\item[2)] $\Gamma^{(t)}$ is strongly nonnegative with $q(\Gamma^{(t)})\le q$.
\item[3)] $m(\Gamma^{(t)})=1$.
\item[4)] $\overline \Gamma^{(t)}$ is a subsemigroup of $\NN^{d+n+1}$.
\item[5)] $\overline \Gamma^{(t)}$ is strongly nonnegative with $q(\overline \Gamma^{(t)})\le q$.
\item[6)] $m(\overline \Gamma^{(t)})=1$.
\end{enumerate}
\end{Proposition}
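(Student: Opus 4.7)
The plan is to verify all six claims in parallel, with the arguments for $\overline{\Gamma}^{(t)}$ mirroring those for $\Gamma^{(t)}$ via the second half of Lemma \ref{Lemma2} and the nonvanishing $\overline{A}^1 \supset m_R^c A^1 \ne 0$.

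Parts (1) and (4), the semigroup property, follow directly from Lemma \ref{Lemma2}: if $f \in A^k$ and $g \in A^{k'}$ witness elements of $\Gamma^{(t)}$, then $fg \in A^{k+k'}$ witnesses the sum, since $\omega(fg) = \omega(f) + \omega(g)$ preserves the dimension-$\ge t$ condition, and the region constraint $n_1 + \cdots + n_d \le \beta k$ is additive. For (2) and (5), strong nonnegativity follows from combining the two inequalities $n_1 + \cdots + n_d \le \beta k$ and $i_1 + \cdots + i_n = k$ (valid for each $(n, i, k) \in \Gamma^{(t)}$, the latter because $A^k \subset B^k$ is the $k$-th homogeneous piece): together they yield $n_1 + \cdots + n_d + i_1 + \cdots + i_n \le (\beta+1)k$, an inequality that passes by linearity to $\mathrm{Cone}(\Gamma^{(t)})$, so any cone point with last coordinate zero has all nonnegative coordinates summing to zero and must be the origin.

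For (3) and (6), the claim $m = 1$ will be established by producing elements of $\Gamma^{(t)}$ with consecutive last coordinates. Given any $(n, i, k_0) \in \Gamma^{(t)}$ witnessed by $f \in A^{k_0}$, one picks $h \in A^1$ with $n''_1 + \cdots + n''_d \le \beta$; Lemma \ref{Lemma2} then produces $(n + n'', i + i'', k_0 + 1) \in \Gamma^{(t)}$, and the difference in $G(\Gamma^{(t)})$ has last coordinate $1$. The existence of such an $h$ comes from Lemma \ref{LemmaF1}: if every $h \in A^1$ had $n''_1 + \cdots + n''_d > \beta$, then the leading $\omega$-coefficient would have $\nu$-value exceeding $\beta$, placing $h$ into $m_R^c B$; combined with the standard-graded property $A^k = (A^1)^k$, this would force every witness in $\Gamma^{(t)}_{k_0}$ to violate its own region constraint, contradicting $\Gamma^{(t)} \not\subset \{0\}$.

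The hard part is the dimension bound $q(\Gamma^{(t)}) \le q$. By Theorem \ref{ConeTheorem4} it suffices to show $\#\Gamma^{(t)}_k = O(k^q)$. Distinct $\omega$-values are realized by $V_\omega$-linearly independent, hence $R$-linearly independent, elements, so $\#\Gamma^{(t)}_k \le \#\Gamma^{(1)}_k \le \mathrm{rank}_R(A^k) = O(k^r)$, where $r = \dim_K(A \otimes_R K) - 1$. To see $r \le q$, apply uniform Artin--Rees to the inclusions $A^k \subset B^k$: for $c$ sufficiently large, $\overline{A}^k \subset m_R A^k$, so $A^k/\overline{A}^k$ surjects onto $A^k/m_R A^k$ whose $R/m_R$-dimension $\mu(A^k) \ge \mathrm{rank}_R(A^k)$; the hypothesis $\ell_R(A^k/\overline{A}^k) < \gamma_c k^q$ then forces $\mathrm{rank}_R(A^k) = O(k^q)$, giving $r \le q$. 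The analogous bound for $\overline{\Gamma}^{(t)}$ uses $\mathrm{rank}_R(\overline{A}^k) = \mathrm{rank}_R(A^k)$. The main obstacle is the uniform Artin--Rees step: securing a constant $c_0$ independent of $k$ such that $A^k \cap m_R^n B^k \subset m_R^{n - c_0} A^k$ for all $n \ge c_0$.
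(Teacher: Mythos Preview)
Your arguments for the semigroup property (parts 1, 4) and your direct geometric verification of strong nonnegativity via the observation $i_1+\cdots+i_n=k$ are fine. The real problem is the bound $q(\Gamma^{(t)})\le q$, where you take an unnecessarily complicated route and end up with the gap you yourself flag: the uniform Artin--Rees step, asking for a constant $c_0$ independent of $k$ with $A^k\cap m_R^nB^k\subset m_R^{n-c_0}A^k$. Since the inclusions $A^k\subset B^k$ sit in free modules of unbounded rank as $k$ grows, this is not a consequence of ordinary Artin--Rees, and nothing in the hypotheses of Theorem~\ref{TheoremA} supplies it.

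The paper sidesteps all of this. Because $\lambda_i\ge 1$ for every $i$, one has $m_R^{k\beta}\subset K(\nu)_{k\beta}\cap R$, hence $(m_R^{kc\beta}B)\cap A^k\subset K(\omega)_{k(\beta,0,\ldots,0)}\cap A^k$, and then directly
\[
\#\Gamma^{(t)}_k \;\le\; \ell_R\!\left(A^k\big/K(\omega)_{k(\beta,0,\ldots,0)}\cap A^k\right)
\;\le\; \ell_R\!\left(A^k\big/(m_R^{kc\beta}B)\cap A^k\right)
\;<\; \gamma_{c\beta}\,k^q,
\]
the first inequality being immediate from (\ref{eqF51}) and the last being hypothesis (\ref{eq1}) applied with $c\beta$ in place of $c$. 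Theorem~\ref{ConeTheorem4} now yields both strong nonnegativity and $q(\Gamma^{(t)})\le q$ simultaneously, making your separate geometric check of strong nonnegativity redundant (though not incorrect). No rank computation, no Artin--Rees.

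A secondary issue in your argument for part (3): you invoke $A^k=(A^1)^k$, but Theorem~\ref{TheoremA} does not assume $A$ is standard graded, and even granting that, your contradiction (forcing witnesses in $\Gamma^{(t)}_{k_0}$ to violate the region constraint) does not close cleanly under the lex order defining $\omega$. The paper instead just picks any nonzero $g\in A^1$, computes $\omega(g)$, and enlarges $\beta$ at the outset so that the constraint $m_1+\cdots+m_d\le\beta$ holds for this one element; a larger $\beta$ only strengthens Lemma~\ref{LemmaF1} and enlarges the defining region for $\Gamma^{(t)}$, so this costs nothing.
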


\begin{proof}  We will prove the Proposition for $\Gamma^{(t)}$. The proof for $\overline\Gamma^{(t)}$ is the same.
It follows from Lemma \ref{Lemma2} that $\Gamma^{(t)}$ is a subsemigroup of $\NN^{d+n+1}$.

$m_R^{k\beta}\subset K(\nu)_{k\beta}\cap R$ for all $k$ (since $\lambda_i\ge 1$ for all $i$), so 
$$
(m_R^{kc\beta}B)\cap A^k\subset K(\omega)_{k(\beta,0,\ldots,0)}\cap A^k.
$$
Thus
$$
\#\Gamma_k^{(t)}\le \ell_R(A^k/K(\omega)_{k(\beta,0,\ldots,0)}\cap A^k)
\le \ell_R(A^k/(m_R^{kc\beta}B)\cap A^k)\le \gamma_{c\beta}k^q
$$
for all $k$ by (\ref{eq1}). By Theorem \ref{ConeTheorem4}, $\Gamma^{(t)}$ is thus strongly nonnegative and $q(\Gamma^{(t)})\le q$. 

By assumption $\Gamma_i^{(t)}\neq\emptyset$ for some $i\ge 1$. Thus there exists $0\ne f\in A^i$ such that 
$$
\omega(f)=n_1\lambda_1+\cdots+n_d\lambda_d+i_1+\cdots+i_n
$$
with $n_1+\cdots+n_d\le \beta i$ and
$$
\ell_{R/m_R}\left(A^i\cap K(\omega)_{\omega(f)}/A^i\cap K(\omega)_{\omega(f)}^+\right)\ge t.
$$
By assumption, there exists $0\ne g\in A^1$. Let 
$$
\omega(g)=m_1\lambda_1+\cdots+m_d\lambda_d+j_1+\cdots+j_n.
$$
After  increasing $\beta$ if necessary, we may assume that $m_1+\cdots+m_d\le\beta j$. Thus
$$
\omega(fg)=(m_1+n_1)\lambda_1+\cdots+(m_d+n_d)\lambda_d+(i_1+j_1)+\cdots+(i_n+j_n)
$$
with $(m_1+n_1)+\cdots+(m_d+n_d)\le\beta(i+j)$. Thus
$\Gamma^{(t)}_{k+1}\ne \emptyset$ by Lemma \ref{Lemma2}, so that $m(\Gamma^{(t)})=1$.

\end{proof}

It thus follows from Theorem \ref{ConeTheorem3} that the limits
$$
\lim_{k\rightarrow \infty} \frac{\#\Gamma_k^{(t)}}{k^{q}}\mbox{ and }\lim_{k\rightarrow \infty}\frac{\#\overline \Gamma_k^{(t)}}{k^q}
$$
exist. The conclusions of Theorem \ref{TheoremA} now follow from (\ref{eqG60}), (\ref{eqF51}) and (\ref{eqF52}).

\subsection{Limits for graded algebras over a reduced local ring}

\begin{Theorem}\label{TheoremB} Suppose that $R$ is an analytically unramified local ring, 
$$
B=R[x_1,\ldots,x_n]=\bigoplus_{k\ge 0}B^k
$$
is a standard graded polynomial ring over $R$ and $A=\bigoplus_{k\ge 0}A^k$ is a graded $R$-subalgebra of $B$. Suppose that 
if $P$ is a minimal prime of $R$ and $A^1/PB\cap A^1=0$ then $A^k/PB\cap A^k=0$ for all $k\ge 1$.
Further suppose that $q\in \ZZ_{>0}$ is such that for all $c\in \ZZ_{>0}$, there exists $\gamma_c\in\RR_{>0}$ such that
\begin{equation}\label{eq2}
\ell_R(A^k/(m_R^{ck}B)\cap A^k)<\gamma_ck^q
\end{equation}
for all $k\ge 0$. Then for any fixed positive integer $c$,
$$
\lim_{k\rightarrow\infty}\frac{\ell_R(A^k/(m_R^{ck}B)\cap A^k)}{k^q}
$$
exists.
\end{Theorem}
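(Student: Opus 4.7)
The strategy is to reduce Theorem \ref{TheoremB} to Theorem \ref{TheoremA} by decomposing $R$ along the minimal primes of its completion, mirroring the reduction used at the end of Section \ref{Section6} for Theorem \ref{TheoremM7}. First I would reduce to $R=\hat R$ complete: flatness of $R\to\hat R$ together with $m_R^{ck}A^k\subset m_R^{ck}B\cap A^k$ gives $\ell_{\hat R}(\hat A^k/(m_{\hat R}^{ck}\hat B\cap\hat A^k))=\ell_R(A^k/(m_R^{ck}B\cap A^k))$, exactly as in the opening reduction of the proof of Theorem \ref{TheoremA}. Since $R$ is analytically unramified, $\hat R$ is reduced; moreover, for any minimal prime $Q$ of $\hat R$ the intersection $P:=Q\cap R$ is a minimal prime of $R$, and the identification $R/P\hookrightarrow \hat R/Q$ gives $R[x_1,\ldots,x_n]\cap Q\hat R[x_1,\ldots,x_n]=PR[x_1,\ldots,x_n]$, so the minimal-prime hypothesis transfers. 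Thus I may assume $R$ is complete and reduced.

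Next, let $P_1,\ldots,P_t$ be the minimal primes of $R$ and set $R_i:=R/P_i$, $B_i:=B/P_iB=R_i[x_1,\ldots,x_n]$, and $A_i^k:=A^k/(P_iB\cap A^k)\subset B_i^k$, so that $A_i$ is a graded $R_i$-subalgebra of $B_i$. Each $R_i$ is a complete local domain, hence analytically irreducible. By the hypothesis on minimal primes, for each $i$ either $A_i^k=0$ for all $k$ or $A_i^1\neq 0$; reorder so the latter holds precisely for $1\le i\le s$. For such $i$, the natural surjection $A^k/(m_R^{ck}B\cap A^k)\twoheadrightarrow A_i^k/(m_{R_i}^{ck}B_i\cap A_i^k)$, together with $\ell_R=\ell_{R_i}$ on finite-length modules with common residue field $R/m_R=R_i/m_{R_i}$, yields $\ell_{R_i}(A_i^k/(m_{R_i}^{ck}B_i\cap A_i^k))\le\gamma_ck^q$. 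Hence Theorem \ref{TheoremA} applies to $A_i\subset B_i$ over $R_i$ and produces limits $a_i:=\lim_{k\to\infty}\ell_{R_i}(A_i^k/(m_{R_i}^{ck}B_i\cap A_i^k))/k^q\in\RR$.

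The remaining task, and the main obstacle, is the comparison estimate
\[
\Bigl|\ell_R(A^k/(m_R^{ck}B\cap A^k))-\sum_{i=1}^s\ell_{R_i}(A_i^k/(m_{R_i}^{ck}B_i\cap A_i^k))\Bigr|=o(k^q),
\]
from which the limit for $A$ exists and equals $\sum_{i=1}^s a_i$. This plays the role of Lemma \ref{LemmaM30} in the graded subalgebra setting. My approach is to use the exact sequence $0\to R\to\bigoplus_i R_i\to C\to 0$ with $\dim_R C<d$ (valid because $R$ is reduced complete of dimension $d$), tensor with the $R$-flat module $B$ to obtain $0\to B\to\bigoplus_i B_i\to C\otimes_R B\to 0$, and apply the snake lemma to multiplication by $m_R^{ck}$ to identify the kernel and cokernel of $B/m_R^{ck}B\to\bigoplus_i B_i/m_{R_i}^{ck}B_i$ with $(C\otimes_R B)[m_R^{ck}]$ and $(C\otimes_R B)/m_R^{ck}(C\otimes_R B)$ respectively. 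Restricting to degree $k$ and intersecting with $A^k$ should bound the discrepancy via (i) the dimension estimate $\dim C<d$, (ii) Artin--Rees for the filtration $\{m_R^{ck}B\cap A^k\}_k$ inside $B$, and (iii) the uniform bound (\ref{eq2}) to control terms arising from the non-flatness of $A^k$ over $R$. The delicate technical point is that a naive snake-lemma bound at the level of $B^k$ alone yields only $O(k^{n+d-2})$, which may exceed $o(k^q)$; the resolution must exploit that $A^k$ is much sparser than $B^k$ and so leans essentially on (\ref{eq2}) (and on $\mathrm{Tor}_1^R(C,-)$ being absorbable) rather than on flat base change alone.
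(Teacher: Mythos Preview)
Your reduction to the complete case and the transfer of the minimal-prime hypothesis are sound, and applying Theorem \ref{TheoremA} over each $R_i=R/P_i$ is the right endpoint. The gap is precisely where you flag it: the comparison estimate is not proved, and the mechanism you sketch cannot close it. Concretely, the kernel of
\[
A^k/(m_R^{ck}B\cap A^k)\ \longrightarrow\ \bigoplus_i A^k/\bigl((m_R^{ck}+P_i)B\cap A^k\bigr)
\]
is $(A^k\cap\omega_{ck})/(A^k\cap m_R^{ck}B)$, where $\omega_m:=\bigcap_i(m_R^m+P_i)B$. Artin--Rees gives $\omega_{ck}\subset m_R^{ck-\lambda}B$, so the kernel length is at most
\[
\ell_R\bigl(A^k/(m_R^{ck}B\cap A^k)\bigr)-\ell_R\bigl(A^k/(m_R^{ck-\lambda}B\cap A^k)\bigr).
\]
But (\ref{eq2}) only bounds each term separately by $O(k^q)$; it says nothing about their difference being $o(k^q)$, and indeed the limit in the theorem can genuinely depend on $c$, so such a difference need not be small. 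The cokernel is worse: the diagonal image of $A^k$ in $\bigoplus_i A_i^k$ has no ideal-theoretic structure to exploit. The analogy with Lemma \ref{LemmaM30} breaks down because there the error lives in modules of dimension $<d$, yielding $O(n^{d-1})=o(n^d)$; here $q$ is decoupled from $d$ and from the number of variables $n$, so a dimension drop in $C=\mathrm{coker}(R\to\bigoplus R_i)$ buys nothing against $k^q$.

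The paper sidesteps the comparison entirely by producing an \emph{exact} telescoping decomposition. With $\omega_m$ as above, Artin--Rees gives $\beta$ such that $\omega_{\beta k}\subset m_R^{ck}B$ for all $k\ge1$, so
\[
\ell_R\bigl(A^k/(m_R^{ck}B\cap A^k)\bigr)=\ell_R\bigl(A^k/(\omega_{\beta k}\cap A^k)\bigr)-\ell_R\bigl((m_R^{ck}B\cap A^k)/(\omega_{\beta k}\cap A^k)\bigr).
\]
For the first term, filter by $L_k^j:=\bigl[\bigcap_{i\le j}(m_R^{\beta k}+P_i)B\bigr]\cap A^k$, so $L_k^0=A^k$ and $L_k^s=\omega_{\beta k}\cap A^k$. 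Each successive quotient is \emph{isomorphic} (not merely comparable) to
\[
L_k^j/L_k^{j+1}\ \cong\ (L_k^jC_{j+1}^k)\big/\bigl((L_k^jC_{j+1}^k)\cap m_{R_{j+1}}^{\beta k}C_{j+1}\bigr),
\]
where $C_{j+1}=R_{j+1}[x_1,\dots,x_n]$, and $\bigoplus_k L_k^jC_{j+1}^k$ is a graded $R_{j+1}$-subalgebra of $C_{j+1}$ to which Theorem \ref{TheoremA} applies (the bound (\ref{eq1}) is inherited from (\ref{eq2}) via $L_k^j\subset A^k$). The second term is handled identically, with the graded subalgebra $\bigoplus_k(m_R^{ck}B\cap A^k)$ in place of $A$. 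No error term ever appears; the key idea you are missing is to work with the intermediate ideal $\omega_{\beta k}$ rather than $m_R^{ck}B$ itself, so that the intersection of the $(m_R^{\beta k}+P_i)B$ factors exactly through the filtration.
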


Let $c>0$ be a fixed positive integer. 
Let $R_i=R/P_i$  and $C_i=B\otimes_RR/P_i\cong R/P_i[x_1,\ldots,x_n]$ for $1\le i\le s$. As a graded ring, $C_i=\bigoplus C_i^k$
where $C_i^k\cong B^k\otimes_RR/P_i$ as free $R/P_i$-modules. Let $C=\bigoplus_{i=1}^s C_i$. Let $\phi:B\rightarrow C$ be the natural homomorphism. $\phi$ is 1-1 since its kernel is $\cap P_iB= (\cap P_i)B=0$. By Artin-Rees, there exists a positive integer $\lambda$ such that
\begin{equation}\label{eqF40}
\omega_n:=\phi^{-1}(m_R^nC)=B\cap m_R^nC\subset m_R^{n-\lambda}B
\end{equation}
for all $n\ge \lambda$.
Thus
\begin{equation}\label{eqF41}
m_R^nB\subset \omega_n\subset m_R^{n-\lambda}B
\end{equation}
for all $n\ge \lambda$. We have that
$$
\omega_n=\phi^{-1}(m_R^nC)=\phi^{-1}(m_R^nC_1\bigoplus\cdots\bigoplus m_R^nC_s)
=\left[(m_R^n+P_1)B\right]\cap\cdots\cap \left[(m_R^n+P_s)B\right].
$$
Let $\beta=(\lambda+1)c$. We have that 
$$
\omega_{\beta n}\subset m_R^{c(\lambda+1)n-\lambda}B\subset m_R^{cn}B
$$
for all $n\ge 1$. Thus
\begin{equation}\label{eq42}
\ell_R(A^n/((m_R^{cn}B)\cap A^n))=
\ell_R(A^n/(\omega_{\beta n}\cap A^n))-\ell_R((m_R^{cn}B)\cap A^n)/(\omega_{\beta n}\cap A^n))
\end{equation}
for all $n\ge 1$.

Define $L_0^j=R$ for $0\le j\le s$, and for $n>0$, define $L_n^0=A^n$ and
$$
L_n^j=\left[(m_R^{\beta n}+P_1)B\right]\cap\cdots\cap \left[(m_R^{\beta n}+P_j)B\right]\cap A^n.
$$
Let $L^j=\bigoplus_{n\ge 0}L_n^j$, a graded $R$-subalgebra of $B$.
For $0\le j\le s-1$ and $n\ge 1$, we have isomorphisms of $R$-modules
$$
\begin{array}{lll}
L_n^j/L_n^{j+1}&\cong& L_n^j/\left[(m_R^{\beta n}+P_{j+1})B\cap L_n^j\right]\\
&\cong& (L_n^j/P_{j+1}B^n)/\left((L_n^j/P_{j+1}B^n)\cap m_R^{\beta n}(B^n/P_{j+1}B^n)\right)\\
&\cong& \left[L_n^jC_{j+1}^n\right]/\left([L_n^jC_{j+1}^n]\cap[m_R^{\beta n}C_{j+1}]\right)
\end{array}
$$
and 
$$
L_n^s\cong \omega_{\beta n}\cap A^n.
$$
Thus
\begin{equation}\label{eq43}
\begin{array}{lll}
\ell_R(A^n/\omega_{\beta n}\cap A^n)&=& \sum_{j=0}^{s-1}\ell_R(L_n^j/L_n^{j+1})\\
&=& \sum_{j=0}^{s-1}\ell_{R_{j+1}}\left(
L_n^jC_{j+1}^n/\left[(L_n^jC_{j+1}^n)\cap m_R^{\beta n}C_{j+1}\right]\right).
\end{array}
\end{equation}
For some fixed $j$ with $0\le j\le s-1$, let
$$
\overline R=R/P_{j+1},\,\,\,\overline C=C_{j+1},\,\,\,\overline A^n=L_n^j\overline C^n.
$$
By assumption, If $\overline A^1=0$ then $\overline A^n=0$ for all $n\ge 1$, so we may assume that $\overline A^1\ne 0$.
Since $\overline R$ is analytically irreducible, by Theorem \ref{TheoremA},
$$
\lim_{n\rightarrow \infty}\frac{\ell_{\overline R}(\overline A^n/(m_R^{\beta n}\overline C)\cap \overline A^n)}{n^q}
$$
exists. Thus
$$
\lim_{n\rightarrow \infty}\frac{\ell_R(A^n/\omega_{\beta n}\cap A^n)}{n^q}
$$
exists by (\ref{eq43}). The same argument (from (\ref{eq42})) applied to $(m_R^{cn}B)\cap A^n$ (instead of $A^n$) implies
$$
\lim_{n\rightarrow \infty}\frac{\ell_R([(m_R^{cn}B)\cap A^n]/[\omega_{\beta n}\cap A^n])}{n^q}
$$
exists, so
$$
\lim_{n\rightarrow \infty}\frac{\ell_R(A^n/[(m_R^{cn}B)\cap A^n])}{n^q}
$$
exists by (\ref{eq42}).

\subsection{The proof of Theorem \ref{TheoremC}}

In this subsection, we prove Theorem \ref{TheoremC}. We assume throughout this section that $R$, $E$ and $F$ satisfy the assumptions of Theorem \ref{TheoremC}.
In particular, we suppose that $R$ is a $d$-dimensional,  analytically unramified local ring, and $E$ is a rank $e$
submodule of a free (finite rank) $R$-module $F=R^n$.
Let $B=R[F]$ be the symmetric algebra of $F$ over $R$, which is isomorphic to a standard graded polynomial ring $B=R[x_1,\ldots,x_n]=\bigoplus_{k\ge 0}F^k$  over $R$. We may identify $E$ with a submodule $E^1$ of $B^1$, and let $R[E]=\bigoplus_{n\ge 0}E^k$ be the $R$-subalgebra of $B$ generated by $E^1$ over $R$.

Let $P_1,\ldots, P_s$ be the minimal primes of $R$. Since $R$ is reduced,   $\cap P_i=(0)$ and the total quotient field of $R$ is isomorphic to 
$\bigoplus_{i=1}^sR_{P_i}$, with $R_{P_i}\cong (R/P_i)_{P_i}$. The assumption that $\mbox{rank}(E)=e$ is simply that $E\otimes_RR_{P_i}$ has rank $e$ for all $i$.

\begin{Lemma}\label{Lemma30} The Krull dimension of $R[E]$ is $d+e$.
\end{Lemma}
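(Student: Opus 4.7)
The plan is to reduce the computation to the case where $R$ is a local domain by passing to the minimal primes of $R$, and then invoke the classical Rees-algebra dimension formula for a submodule of constant generic rank over a Noetherian local domain.

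First, let $P_1,\dots,P_s$ be the minimal primes of $R$ and set $Q_i = P_i B \cap R[E]$. The embedding $R[E]/Q_i \hookrightarrow B/P_iB \cong (R/P_i)[x_1,\dots,x_n]$ exhibits $Q_i$ as a prime ideal. Since $R$ is reduced (because $R$ is analytically unramified) and $B$ is free over $R$,
$$
\bigcap_i Q_i \;\subset\; \bigcap_i P_iB \;=\; \Bigl(\bigcap_i P_i\Bigr)B \;=\; 0.
$$
Moreover $Q_i \cap R = P_i$, so the $Q_i$ are pairwise incomparable, and hence $\{Q_1,\dots,Q_s\}$ is precisely the set of minimal primes of $R[E]$. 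Therefore
$$
\dim R[E] \;=\; \max_{1\le i\le s} \dim R[E]/Q_i.
$$

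Next, under the embedding above, $R[E]/Q_i$ is the $(R/P_i)$-subalgebra of $(R/P_i)[x_1,\dots,x_n]$ generated by the image $E_i$ of $E$ in $(R/P_i)^n$. Because $\mathrm{rank}_R(E)=e$, the module $E_i$ has rank $e$ over $\mathrm{Frac}(R/P_i)$. The problem thus reduces to proving the following domain-case assertion: for a Noetherian local domain $A$ of dimension $\delta$ and a submodule $E' \subset A^n$ of rank $e$, one has $\dim A[E'] = \delta + e$.

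For the domain case, choose $f_1,\dots,f_e \in E'$ whose images in $E' \otimes_A K$ form a $K$-basis, where $K = \mathrm{Frac}(A)$. The $A$-algebra map $A[y_1,\dots,y_e] \to A[x_1,\dots,x_n]$ sending $y_i \mapsto f_i$ has $A$-torsion-free kernel (its source is $A$-free), and the kernel vanishes after tensoring with $K$ (the $f_i$ are $K$-linearly independent linear forms, hence $K$-algebraically independent). So the map is injective, and $A[f_1,\dots,f_e]$ is a polynomial ring in $e$ variables over $A$, of Krull dimension $\delta+e$. Since every generator of $E'$ lies in the $K$-span of the $f_i$, the rings $A[f_1,\dots,f_e] \subset A[E']$ share the fraction field $K(f_1,\dots,f_e)$, and the classical dimension formula for Rees algebras of modules of constant generic rank gives $\dim A[E'] = \delta + e$. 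Taking the maximum over $i$ and noting that $\max_i \dim R/P_i = d$, we conclude $\dim R[E] = d+e$.

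\textbf{Main obstacle.} The subtle point is the upper bound $\dim A[E'] \le \delta + e$ in the domain case. Over a universally catenary $A$, this is immediate from the standard dimension inequality for finitely generated extensions together with $\mathrm{tr.deg}_{\mathrm{Frac}(A)} \mathrm{Frac}(A[E']) = e$. Since $A = R/P_i$ need not be universally catenary, the remedy is to pass to the completion $\hat R$: by analytic unramifiedness $\hat R$ is reduced, so its quotients by minimal primes are complete local domains (hence excellent and universally catenary), and one transfers the dimension back via flatness of $R \to \hat R$ and compatibility of the irrelevant maximal ideal of the graded algebra $R[E]$ under base change.
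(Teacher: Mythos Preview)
Your reduction to the minimal primes $P_i$ of $R$ and the identification of $R[E]/Q_i$ with the subalgebra $(R/P_i)[E_i]$ of a polynomial ring over the domain $R/P_i$ is exactly what the paper does (the paper writes the kernels as $K_i$ rather than $Q_i$, but the argument is identical). The divergence is entirely in the domain case, and there your argument has two problems.

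First, it is circular. After producing the polynomial subring $A[f_1,\dots,f_e]\subset A[E']$ with the same fraction field, you conclude by invoking ``the classical dimension formula for Rees algebras of modules of constant generic rank.'' But that formula is precisely the statement $\dim A[E']=\delta+e$ that you are trying to prove; nothing has been established beyond the existence of the polynomial subring. Note too that sharing a fraction field with a subring of dimension $\delta+e$ does not by itself force $\dim A[E']\ge \delta+e$: birational finite-type extensions can lower Krull dimension.

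Second, you have the obstacle backwards. The upper bound $\dim A[E']\le\delta+e$ needs no catenarity hypothesis whatsoever: the dimension inequality (e.g.\ Matsumura, Theorem~15.5, or the very Lemma~1.2.2 of Vasconcelos the paper cites) says that for any Noetherian domain $A$ and any finitely generated domain extension $B$, every prime $P$ of $B$ lying over $\mathfrak p\subset A$ satisfies $\operatorname{ht}P\le\operatorname{ht}\mathfrak p+\operatorname{trdeg}_A B$, so $\dim A[E']\le\dim A+e$ immediately. Hence your detour through the completion $\hat R$ is unnecessary for the purpose you state. The paper bypasses all of this by computing $\operatorname{trdeg}_{R/P_i}A_i=e$ directly (via $A_i\otimes_{R/P_i}(R/P_i)_{P_i}$ being a polynomial ring in $e$ variables over the field $R_{P_i}$) and then citing Vasconcelos for the equality $\dim A_i=\dim(R/P_i)+\operatorname{trdeg}_{R/P_i}A_i$.
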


\begin{proof}
Let $B_i=B/P_iB\cong R/P_i[x_1,\ldots,x_n]$. $\cap (P_iB)= (\cap P_i)B=0$ so the natural homomorphism $B\rightarrow \bigoplus_{i=1}^s B_i$ is 1-1.
Let $E_i=E(F\otimes_R(R/P_i))$. Let $A_i=(R/P_i)[E_i]$ be the graded $R/P_i$-subalgebra of $B_i$ generated by $E_i$. $A_i$ is the image of the natural graded homomorphism from $R[E]$ into $B_i$. Let $K_i$ be the kernel of $R[E]\rightarrow A_i$. The natural homomorphism
$R[E]\rightarrow \bigoplus_{i=1}^sA_i$ is 1-1 since this map factors through the   composition
$R[E]\rightarrow B\rightarrow \bigoplus_{i=1}^sB_i$ of 1-1 homomorphisms. Thus $\cap K_i=(0)$.

$\mbox{rank}(E)=e$ implies $\mbox{rank}(E\otimes_R R_{P_i})=e$ for all $i$ so that $\mbox{rank}(E_i)=e$ for all $i$, since $E_i\otimes_RR_{P_i}\cong E\otimes_RR_{P_i}$. Thus 
$A_i\otimes_{R/P_i}(R/P_i)_{P_i}$ is an $e$-dimensional polynomial ring over the field $R_{P_i}\cong (R/P_i)_{P_i}$. Thus $\mbox{trdeg}_{R/P_i}A_i=e$. Since $R/P_i$ is a Noetherian domain and $(R/P_i)[E_i]$ is a finitely generated $R/P_i$-algebra,
$$
\dim A_i=\dim R/P_i+\mbox{\rm trdeg}_{R/P_i}A_i
$$
by Lemma 1.2.2 \cite{V}.

Since any prime ideal in $R[E]$ must contain some $K_i$, we obtain from the definition of Krull dimension that
$$
\dim R[E] = \max\dim A_i=\max\{\dim(R/P_i)+e\}=d+e.
$$
\end{proof}

\begin{Lemma}\label{Lemma31}
Suppose that $c$ is a positive integer. Then 
there exists a constant $\beta_c$ such that 
\begin{equation}\label{eqF10}
\ell_R(E^k/(m_R^{ck}B)\cap E^k)<\beta_ck^{d+e-1}
\end{equation}
for all positive integers $k$.
\end{Lemma}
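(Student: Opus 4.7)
The plan is to pass to the bigraded associated graded ring of $R[E]$ with respect to the $m_R R[E]$-adic filtration and apply multigraded Hilbert polynomial theory. First I would reduce to the case that $R$ is analytically irreducible, proceeding as in the reduction from Theorem \ref{TheoremB} to Theorem \ref{TheoremA} earlier in Section \ref{Section7}: using the minimal primes of $\hat R$ and an Artin-Rees argument, one expresses $\ell_R(E^k/(m_R^{ck}B \cap E^k))$ up to bounded error as a sum of corresponding lengths over the analytically irreducible rings $\hat R/P_i$. Next, since $m_R^{ck}E^k \subseteq m_R^{ck}B \cap E^k$, we have $\ell_R(E^k/(m_R^{ck}B \cap E^k)) \leq \ell_R(E^k/m_R^{ck}E^k)$, so it suffices to bound $\ell_R(E^k/m_R^{ck}E^k)$ by a constant multiple of $k^{d+e-1}$.

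By Lemma \ref{Lemma30}, $R[E]$ is a finitely generated graded $R$-algebra of Krull dimension $d+e$; let $\mathfrak M = m_R R[E] + R[E]_+$ denote its unique maximal graded ideal, so that $R[E]_{\mathfrak M}$ is local of dimension $d+e$. I consider the bigraded associated graded ring
\[
G = \mathrm{gr}_{m_R R[E]}(R[E]) = \bigoplus_{i\ge 0} m_R^i R[E]/m_R^{i+1} R[E],
\]
a Noetherian bigraded algebra over the residue field $R/m_R$, whose bidegree $(i,k)$ component is $G_{(i,k)} = m_R^i E^k/m_R^{i+1}E^k$. Each such component is annihilated by $m_R$, so $h(i,k) := \ell_R(G_{(i,k)}) = \dim_{R/m_R} G_{(i,k)}$. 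Since $\bigcap_n m_R^n E^k = 0$ by Krull's intersection theorem applied to each finitely generated $R$-module $E^k$, localizing at $\mathfrak M$ and invoking standard associated-graded theory yields $\dim G = \dim R[E] = d+e$.

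Multigraded Hilbert polynomial theory, applied to the finitely generated bigraded $R/m_R$-algebra $G$ of Krull dimension $d+e$, furnishes a constant $C$ with $h(i,k) \leq C(i+k+1)^{d+e-2}$ for all $(i,k) \in \NN^2$. Therefore
\[
\ell_R(E^k/m_R^{ck}E^k) = \sum_{i=0}^{ck-1} h(i,k) \leq C \cdot ck \cdot ((c+1)k+1)^{d+e-2} = O(k^{d+e-1}),
\]
supplying the desired constant $\beta_c$.

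The main obstacle is justifying the uniform bigraded Hilbert bound $h(i,k) \leq C(i+k+1)^{d+e-2}$ valid for \emph{all} bidegrees, not merely asymptotically. This is a consequence of the rationality of the bigraded Hilbert series of $G$ and the fact that its pole at $(t,u)=(1,1)$ has order $d+e=\dim G$, but requires careful patching of the polynomial region (where the bigraded Hilbert function is a polynomial of total degree $d+e-2$) with uniform bounds on the remaining small bidegrees.
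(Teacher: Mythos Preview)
Your approach is essentially the paper's: pass to the bigraded ring $G=\mathrm{gr}_{m_RR[E]}(R[E])$, bound its Krull dimension by $d+e$ via Lemma~\ref{Lemma30}, invoke bigraded Hilbert theory, and then use $m_R^{ck}E^k\subset (m_R^{ck}B)\cap E^k$. Two remarks are worth making. First, your initial reduction to the analytically irreducible case is unnecessary and the paper does not make it: the associated-graded argument works over any Noetherian local $R$, so this step only adds work. Second, what you flag as the ``main obstacle'' --- a uniform bound $h(i,k)\le C(i+k+1)^{d+e-2}$ for \emph{all} $(i,k)$, requiring a patching of the polynomial region with small bidegrees --- is sidestepped in the paper by citing a theorem of Amao (alternatively Theorem~8.20 of Miller--Sturmfels) which asserts directly that the partial sum $k\mapsto\sum_{i=0}^{ck-1}\dim_{\mathfrak k}G_{(i,k)}=\ell_R(E^k/m_R^{ck}E^k)$ is a polynomial in $k$ of degree at most $\dim G-1\le d+e-1$ for $k\gg0$. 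This yields the bound $\beta_ck^{d+e-1}$ immediately, without any termwise estimate or Hilbert-series pole analysis.
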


\begin{proof} Let 
$$
A=\bigoplus_{i,j\ge 0}\left(m_R^iE^j/m_R^{i+1}E^j\right).
$$
$A$ is a bigraded algebra over the field $\mathfrak k:=R/m_R$. Let $a_1,\ldots,a_m$ be generators of $m_R$ as an $R$-module and
$b_1,\ldots,b_n$ be generators of $E$ as an $R$-module. Let $S=\mathfrak k[x_1,\ldots,x_m;y_1,\ldots,y_n]$ be a polynomial ring.
$S$ is bigraded by $\deg(x_i)=(1,0)$ and $\deg(y_j)=(0,1)$. The surjective $\mathfrak k$-algebra homomorphism
$S\rightarrow A$ defined by 
$$
x_i\mapsto [a_i]\in m_R/m_R^2,\,\,\,y_j\mapsto [b_j]\in E^1/m_RE^1
$$
is bigraded, realizing $A$ as a bigraded $S$-module. $A\cong {\rm gr}_{m_RR[E]}R[E]$, so
$$
\dim_SA=\dim A\le \dim R[E]=d+e.
$$
Most directly from Theorem 2.4 and 2.2 \cite{A}, or as can be deduced by the general result Theorem 8.20 of  \cite{MS}, there exists a positive integer $k_0$ such that $k\ge k_0$ implies
$$
\ell_R(E^k/m_R^{kc}E^k)=\sum_{i=0}^{kc-1}\dim_{\mathfrak k}\left(m_R^iE^k/m_R^{i+1}E^k\right)
$$
is a polynomial in $k$ of degree $\le d+e-1$. 

$m_R^{ck}E^k\subset (m_R^{ck}F^k)\cap E^k$ implies
$$
\ell_R(E^k/(m_R^{ck}F^k)\cap E^k)\le \ell_R(E^k/m_R^{ck}E^k)
$$
from which the conclusions of the lemma follows.
\end{proof}

Now we prove Theorem \ref{TheoremC}.

Let $I=E^1B$, the ideal generated by $E^1$ in
$B$.   By Theorem 3.4 \cite{Sw}. for all $k\ge 1$, there exist irredundant primary decompositions
$$
I^k=q_1(k)\cap \cdots \cap q_t(k)
$$
and a positive integer $c_0$ such that 
$$
\sqrt{q_i(k)}^{ck}\subset q_i(k)
$$
for all $k$. Suppose that $c\ge c_0$. Since 
$$
I^k:_B(m_RB)^{\infty}=\cap_{m_RB\not\subset \sqrt{q_i(k)}}\cap q_i(k),
$$
we have that 
\begin{equation}\label{eq11}
(m_RB)^{ck}\cap (E^1B)^k:_B(m_RB)^{\infty})\subset (E^1B)^k
\end{equation}
for all positive integers $k$. Now
\begin{equation}\label{eqF13}
(E^1B)^k\cap F^k=E^k,\,\,\,(m_RB)^{ck}\cap F^k=m_R^{ck}F^k
\end{equation}
and
\begin{equation}\label{eq12}
\begin{array}{lll}
[(m_RB)^{ck}\cap (E^1B)^k:_B(m_RB)^{\infty}]\cap F^k
&=&[(m_RB)^{ck}\cap F^k]\cap [((E^1B)^k:_B(m_RB)^{\infty})\cap F^k]\\
&=&(m_R^{ck}F^k)\cap (E^k:_{F^k}m_R^{\infty}).
\end{array}
\end{equation}
Thus for all $k$,
$$
m_R^{ck}F^k\cap E^k\subset (m_R^{ck}F^k)\cap (E^k:_{F^k}m_R^{\infty})
\subset (m_R^{ck}F^k)\cap[(E^1B)^k\cap F^k] 
=(m_R^{ck}F^k)\cap E^k.
$$
Hence
\begin{equation}\label{eq14}
(m_R^{ck}F^k)\cap E^k=(m_R^{ck}F^k)\cap (E^k:_{F^k}m_R^{\infty})
\end{equation}
for $c\ge c_0$ and all positive integers $k$.

Now from (\ref{eq14}), for $c\ge c_0$ and all positive integers $k$, we have short exact sequences
of $R$-modules
\begin{equation}\label{eq15}
0\rightarrow E^k/(m_R^{ck}F^k)\cap E^k\rightarrow (E^k:_{F^k}m_R^{\infty})/((m_R^{ck}F^k)\cap (E^k:_{F^k}m_R^{\infty}))
\rightarrow E^k:_{F^k}m_R^{\infty}/E^k\rightarrow 0.
\end{equation}

Since we assume that the epsilon multiplicity is finite, there exists a positive constant $\alpha$ such that
\begin{equation}\label{eq16}
\ell_R(E^k:_{F^k}m_R^{\infty}/E^k)<\alpha k^{d+e-1}
\end{equation}
for all $k>0$.
From (\ref{eqF10}) and (\ref{eq16}), we obtain bounds
\begin{equation}\label{eq17}
\ell_R(E^k:_{F^k}m_R^{\infty}/((m_R^{ck}F^k)\cap (E^k:_{R^k}m_R^{\infty})))<(\alpha+\beta_c)k^{d+e-1}
\end{equation}
for $c\ge c_0$ and all positive integers $k$.

We have that 
$$
E^1/(P_iF^1)\cap E^1\ne 0 \mbox{ and }(E^1:_{F^1}m_R^{\infty})/(P_iF^1)\cap (E^1:_{F^1}m_R^{\infty})\ne 0
$$
since $\mbox{rank}(E^1\otimes_RR_{P_i})=\mbox{rank}(E)=e$ for $1\le i\le s$.
By Theorem \ref{TheoremB}, (\ref{eqF10}), (\ref{eq17}) and (\ref{eq15}) the conclusions of Theorem \ref{TheoremC} hold.


\begin{thebibliography}{1000000000}

\bibitem{A} J. O. Amao, On a certain Hilbert polynomial, J. London Math. Soc 14 (1976) 13 - 20.









\bibitem{C2} S.D. Cutkosky, Asymptotic growth of saturated powers and epsilon multiplicity, Math. Res. lett. 18 (2011), 93 - 106.



\bibitem{C4} S.D. Cutkosky, Multiplicities associated to graded families of ideals, to appear in Algebra and Number Theory, arXiv:1212.6186.

\bibitem{C5} S.D. Cutkosky, The asymptotic  growth of graded linear series on arbitrary projective schemes, arXiv:1206.4077.

\bibitem{C6} S.D. Cutkosky, Multiplicities of graded families of  linear series and ideals, arXiv:1301.5613.




\bibitem{CHST} S.D. Cutkosky, H.T. Ha, H. Srinivasan and E. Theodorescu, Asymptotic behaviour of the length of local cohomology, Canad. J. Math. 57 (2005), 1178 - 1192.


\bibitem{CHS} S.D. Cutkosky, J. Herzog and H. Srinivasan, Asymptotic growth of algebras associated to powers of ideals, Math. Proc. Camb. Philos. Soc. 148 (2010), 55 - 72.
 


\bibitem{CS} S.D. Cutkosky and V. Srinivas, On a problem of Zariski on dimensions of linear systems, Ann. Math. 137 (1993), 531 - 559.



\bibitem{ELS} L. Ein, R. Lazarsfeld and K. Smith, Uniform Approximation of Abhyankar valuation ideals in smooth function fields,
Amer. J. Math. 125 (2003), 409 - 440. 



\bibitem{Ful} M. Fulger, Local Volumes on Normal Varieties, arXiv:1105.298





\bibitem{EGAIV} A. Grothendieck and J. Dieudonn\'e, EGA IV part 2, Publ. Math. IHES 24 (1065). 



\bibitem{HLP} G. Hardy, J.E. Littlewood and G. P\'olya, Inequalities, second edition, Cambridge Univ. Press, 1952.


\bibitem{HPV} J. Herzog, T. Puthenpurakal and J. Verma, Hilbert polynomials and powers of  ideals, Math. Proc. Cambridge Philos. Soc. 145 (2008), 623 - 642.


\bibitem{Hu} R. Huebl, Completions of local morphisms and valuations, Math. Z. 236 (2001), 201 - 214.



\bibitem{KK} K. Kaveh and G. Khovanskii, Newton-Okounkov bodies, semigroups of integral points, graded algebras and intersection theory, to appear in Annals of Math., arXiv:0904.3350v3.

\bibitem{KK1} K. Kaveh and G. Khovanskii, Convex Bodies and Multiplicities of Ideals, arXiv:1302.2676.

\bibitem{KT} G. Khovanskii and V. Timorin, Alexsandrov-Fenchel inequality for convex bodies, arXiv:1305.4484.



\bibitem{Kl} S. Kleiman, The $\epsilon$-multiplicity as a limit, preprint.

\bibitem{KUV} S. Kleiman, B. Ulrich and J. Validashti, Integral Dependence and Equisingularity, in preparation.




\bibitem{LM} R. Lazarsfeld and M. Musta\c{t}\u{a}, Convex bodies associated to linear series, Ann. Sci. Ec. Norm. Super. 42 (2009), 783 - 835.

\bibitem{MS} E. Miller and B. Sturmfels, Combinatorial commutative algebra, Springer Verlag, 2005.




\bibitem{Mus} M. Musta\c{t}\u{a}, On multiplicities of graded sequences of ideals, J. Algebra 256 (2002), 229-249.



\bibitem{N1} M. Nagata, Local Rings, Wiley, 1962.




\bibitem{Ok} A. Okounkov, Why would multiplicities be log-concave?, in The orbit method in geometry and physics, Progr. Math. 213, 2003, 329-347.

\bibitem{R1} D. Rees, Valuations Associated to Local Rings, J. London Math. Soc. 36 (1961), 24-28.

\bibitem{Re} D. Rees, Izumi's Theorem, in Commutative Algebra, editors: M. Hochster, C. Huneke and J. D. Sally, Springer-Verlag 1989, 407 - 416.

\bibitem{RS} D. Rees and R. Sharp, On a Theorem of B. Teissier on Multiplicities of Ideals in Local Rings, J. London Math. Soc. 18 (1978), 449-463.



\bibitem{Sw} I. Swanson, Powers of ideals: primary decomposition, Artin-Rees lemma and regularity,
Math. Annalen 307 (1997), 299 - 313.

\bibitem{SH} I. Swanson and C. Huneke, Integral Closure of Ideals, Rings and Modules, Cambridge Univ. Press, 2000.



\bibitem{T} B. Teissier, Sur une in\'egalit\'e pour les multipliciti\'es (Appendix to a paper by D. Eisenbud and H. Levine),
Ann. Math. 106 (1977), 38 - 44.


\bibitem{UV} B. Ulrich and J. Validashti, Numerical criteria for integral dependence,  Math. Proc. Camb. Phil. Soc. 151 (2011), 95-102.

\bibitem{V} W. Vasconcelos, Arithmetic of blow up algebras, LMS lecture note series 195, 1994.





\bibitem{ZS2} O. Zariski and P. Samuel, Commutative Algebra Vol II, Van Nostrand (1960).

\end{thebibliography}
\end{document}